\documentclass[12pt,reqno]{amsart}
\overfullrule=5pt
\usepackage{amssymb} % amsmath ,amsfonts,amscd
\usepackage{verbatim,xcolor}
\usepackage{appendix}
\usepackage{hyperref}
%%%%    Macros used in the article: you can change if you like.

\usepackage[top=35mm, bottom=35mm, left=30mm, right=30mm]{geometry}

%\linespread{1.25}
\theoremstyle{plain}
\newtheorem{thm}{Theorem}[section]

\newtheorem{lem}[thm]{Lemma}
\newtheorem{prop}[thm]{Proposition}
\newtheorem{conj}[thm]{Conjecture}

\theoremstyle{definition}
\newtheorem{de}[thm]{Definition}

\numberwithin{equation}{section}
%%%%%%%%%%%%%%%%%%%%%%%%%%%%%%%%%%%%%%%%%%%%%%%%%%%%%%%%%%%%%%%%%%%%%%%%%

\allowdisplaybreaks

\begin{document}
	
	\author{Wen Huang, Leiye Xu and Xiangdong Ye} \address {Wu Wen-Tsun Key Laboratory of Mathematics, USTC, Chinese Academy of Sciences and Department of Mathematics, \\
		University of Science and Technology of China,\\
		Hefei, Anhui, China}
	\email{wenh@mail.ustc.edu.cn,leoasa@mail.ustc.edu.cn, yexd@ustc.edu.cn}

	\title[Complexity and Logarithmic Sarnak conjecture]{Polynomial mean complexity and Logarithmic Sarnak conjecture}
	
	\thanks{}
	
\begin{abstract}In this paper, we reduce the logarithmic Sarnak conjecture to the $\{0,1\}$-symbolic
systems with polynomial mean complexity. By showing that the logarithmic Sarnak conjecture holds for any
topologically dynamical system with sublinear complexity, we provide a variant of the $1$-Fourier
uniformity conjecture, where the frequencies are restricted to any subset of $[0,1]$ with packing dimension less than one.
		
	\end{abstract}

	%%%%%%%%%%%%%%%%%%%%%%%%%%%%%%%%%%%%%%%%%%%%%%
	\maketitle
	
	\parskip 0.4cm
	\section{Introduction}
	In this paper, a {\it topologically dynamical system} (t.d.s. for short) is a pair $(X, T)$, where
	$X$ is a compact metric space endowed with a metric $d$ and $T: X \to X$ is a homeomorphism.
% The distance on $X$ will be denoted by $d(\cdot, \cdot)$.
Denote by $\mathcal{M}(X,T)$ the set of all $T$-invariant Borel probability
measures on $X$, which is a non-empty convex and compact metric space with respect to the weak$^*$ topology.
%Denote by  $\mathcal{M}^{e}(X,T)\subset\mathcal{M}(X,T)$  the ergodic measures, which is the set of  the extremal points of $\mathcal{M}(X,T)$.
 We say a sequence $\xi$ is {\em realized} in $(X,T)$ if there is an $f\in C(X)$ and an $x\in X$
	such that $\xi(n) = f(T^nx)$ for any $n\in\mathbb{N}$. A sequence $\xi$ is called {\em deterministic} if it is realized in a t.d.s.
 with zero topological entropy. The M\"{o}bius function $\mu: \mathbb{N}\rightarrow \{-1,0,1\}$ is defined by
	$\mu(1)=1$ and
	\begin{equation}\label{M-function}
	\mu(n)=\left\{
	\begin{array}{ll}
	(-1)^k & \hbox{if $n$ is a product of $k$ distinct primes;} \\
	0 & \hbox{otherwise.}
	\end{array}
	\right.
	\end{equation}

In this paper, $\mathbb{N}=\{1,2,\cdots\}$, $\mathbb{E}$ (resp. $\mathbb{E}^{log}$) stands for a
finite average (resp. a finite logarithmical average), i.e., $$\displaystyle\mathbb{E}_{n\le N}A_n= \frac1{N}
\sum_{n=1}^{N}A_n\text{ and } \displaystyle\mathbb{E}^{log}_{n\le N}A_n= \frac1{\sum_{n=1}^N\frac{1}{n}}\sum_{n=1}^{N}\frac{A_n}{n}.$$
Here is the well-known conjecture by Sarnak \cite{Sar}:

	\noindent {\bf Sarnak Conjecture:}\ {\em
		The M\"{o}bius function $\mu$ is linearly asymptotically disjoint from any deterministic sequence $\xi$. That is,
		\begin{equation}\label{Sarnak}
		\lim_{N\rightarrow \infty}\mathbb{E}_{n\le N}\mu(n)\xi(n)=0.
		\end{equation}
	}

	The conjecture in the case when $X$ is finite is equivalent to the prime number theorem in
	arithmetic progressions. And the conjecture in case when T is a rotation on the circle is equivalent to
    the Davenport's theorem \cite{D37}. The conjecture in many other special cases have been established
    more recently (see \cite{GT, FKL2018, FKL2019, KLR} and references therein).

	Tao introduced and investigated the following  logarithmic version of Sarnak conjecture \cite{Tao,Tao1} (see also \cite{FHost,TV,TV1,M}).
	
	\medskip
	
	\noindent {\bf Logarithmic Sarnak Conjecture:}\ {\em
		For any topological
		dynamical system $(X,T)$ with zero entropy, any continuous function $f:X\to\mathbb{C}$
		and any point $x$ in $X$,
		\begin{equation}\label{log-Sarnak}
		\lim_{N\rightarrow \infty}\mathbb{E}_{n\le N}^{log}\mu(n)f(n)=0.
		\end{equation}
	}

	Now we let $(X,T)$ be a t.d.s. with a metric $d$. For any $n\in \mathbb{N}$, we consider the so-called {\it mean metric}
induced by $d$
	$$\overline{d}_n(x,y)=\frac{1}{n}\sum_{i=0}^{n-1}d(T^ix,T^iy)$$
	for any $x,y\in X$. For $\epsilon>0$ and a subset $K$ of $X$, we let
	$$S_n(d, T,K ,\epsilon)=\min \{ m\in \mathbb{N}:\exists x_1,x_2,\cdots,x_m
	\text{ s.t. }K\subset\bigcup_{i=1}^m B_{\overline{d}_n}(x_i,\epsilon)\},$$
	where $B_{\overline{d}_n}(x,\epsilon):=\{y\in X: \overline{d}_n(x,y)<\epsilon\}$
	for any $x\in X$. We say $(X,T)$ has {\it polynomial mean complexity} if there exists a constant
$k>0$ such that $\liminf \limits_{n\to+\infty}\frac{S_n(d,T,X,\epsilon)}{n^k}=0$ for all $\epsilon>0$.
	The following is the our main result.
	\begin{thm}\label{thm-1}
		The following statements are equivalent:
		\begin{enumerate}
			\item The logarithmic Sarnak conjecture holds.
			\item The logarithmic Sarnak conjecture holds for any t.d.s. with polynomial mean complexity.
			\item The logarithmic Sarnak conjecture holds for any $\{0,1\}$-symbolic system with polynomial mean  complexity.
		\end{enumerate}
	\end{thm}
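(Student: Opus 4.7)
The implications $(1)\Rightarrow(2)\Rightarrow(3)$ are immediate: polynomial mean complexity implies zero topological entropy (covers in the mean metric are refined by Bowen covers, so the mean metric construction produces a sub-exponential number of balls in particular), and a $\{0,1\}$-symbolic system is a particular t.d.s. The content of the theorem therefore lies entirely in the direction $(3)\Rightarrow(1)$, which I would organize into three steps.

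\emph{Step 1 (reduction to $\{0,1\}$-valued realizations).} Start from a zero entropy t.d.s.\ $(X,T)$, $f\in C(X)$ and $x\in X$. Splitting $f$ into real and imaginary parts and approximating uniformly by step functions on level sets, the linearity of $\mathbb E^{\log}$ reduces \eqref{log-Sarnak} to the case $\xi(n)=\mathbf 1_A(T^nx)$ for a (clopen approximation of a) Borel set $A\subset X$. The orbit closure of $\xi$ in $\{0,1\}^{\mathbb N}$ under the left shift $\sigma$ is then a zero entropy $\{0,1\}$-subshift. The problem is that zero entropy alone is not sufficient to invoke (3), so $\xi$ has to be replaced by an auxiliary sequence with polynomial mean complexity.

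\emph{Step 2 (construction of a polynomial mean complexity auxiliary system).} For each integer $k\ge 1$ I would define the \emph{slowdown} $\xi_k(n):=\xi(\lfloor n^{1/k}\rfloor)$, which is constant on each block $[m^k,(m+1)^k)$, and let $(Y_k,\sigma)$ be its orbit closure. A length-$n$ factor of $\xi_k$ is determined by (i) a starting phase inside the current constant block, whose position contributes at most a polynomial-in-$n$ factor, and (ii) a factor of $\xi$ of length $O(n^{1/k})$. Since $\xi$ has zero topological entropy, this factor count is sub-exponential in $n^{1/k}$, and for the mean (Hamming) metric $\overline d_n$ on the subshift this yields $S_n(d,\sigma,Y_k,\epsilon)=O(n^{k+o(1)})$ for every $\epsilon>0$. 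Thus $(Y_k,\sigma)$ has polynomial mean complexity, so by (3) we obtain $\lim_N \mathbb E^{\log}_{n\le N}\mu(n)\xi_k(n)=0$ for each fixed $k$.

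\emph{Step 3 (transfer back to $\xi$).} To finish, I would rewrite
\[
\sum_{n\le N}\frac{\mu(n)\xi_k(n)}{n}
=\sum_{m\le N^{1/k}}\xi(m)\sum_{\substack{n\in[m^k,(m+1)^k)\\ n\le N}}\frac{\mu(n)}{n},
\]
and compare each inner sum with the logarithmic weight $1/m$ using the fact that the log-length of $[m^k,(m+1)^k)$ is $k\log(1+1/m)\sim k/m$. Letting $k=k(N)\to\infty$ sufficiently slowly and invoking Mertens-type bounds on partial sums of $\mu(n)/n$, a diagonal argument would deliver $\mathbb E^{\log}_{n\le N}\mu(n)\xi(n)\to 0$, completing $(3)\Rightarrow(1)$. \emph{The main obstacle} is precisely this last transfer: the logarithmic averaging operator does not commute with the time change $n\mapsto\lfloor n^{1/k}\rfloor$ in any clean way, and one must carefully match the dynamical input (the polynomial mean complexity upper bound from Step 2, which controls $\xi_k$ for each \emph{fixed} $k$) with the number-theoretic regularity of $\mu$ on the short intervals $[m^k,(m+1)^k)$, choosing $k(N)$ so that both contributions are simultaneously under control.
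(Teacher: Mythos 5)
Your reduction of the theorem to the single implication $(3)\Rightarrow(1)$, attacked head-on by a ``slowdown'' of an arbitrary deterministic sequence, does not work, and the failure is structural rather than technical. First, the complexity claim in Step~2 is false: zero topological entropy only gives $e^{o(m)}$ words of length $m$, so the number of length-$n$ windows of $\xi_k(n)=\xi(\lfloor n^{1/k}\rfloor)$ is of order $e^{o(n^{1/k})}$ (times a polynomial for the phase), and $e^{o(n^{1/k})}$ is in general super-polynomial in $n$ (take complexity $e^{m/\log m}$ for $\xi$). Passing to the mean metric does not rescue this, since mean complexity of a generic zero entropy subshift need not be polynomial --- indeed, if it always were, the theorem would be nearly vacuous. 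Second, and more fatally, the transfer in Step~3 cannot succeed even in principle: your own identity
\[
\sum_{n\le N}\frac{\mu(n)\xi_k(n)}{n}=\sum_{m\le N^{1/k}}\xi(m)\sum_{\substack{n\in[m^k,(m+1)^k)\\ n\le N}}\frac{\mu(n)}{n}
\]
shows that the correlation of $\mu$ with $\xi_k$ is governed by partial sums of $\mu(n)/n$ over \emph{long} intervals, i.e.\ by prime-number-theorem information that is completely decoupled from the values $\mu(m)$. Knowing that $\mathbb{E}^{\log}_{n\le N}\mu(n)\xi_k(n)\to 0$ for each fixed $k$ therefore yields no constraint whatsoever on $\mathbb{E}^{\log}_{m\le M}\mu(m)\xi(m)$; no choice of $k(N)$ and no Mertens-type estimate can recreate the lost arithmetic information. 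So the ``main obstacle'' you flag at the end is not an obstacle to be overcome but an impossibility.

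The paper's actual route is entirely different and you should compare it with your plan. The hard implication is $(2)\Rightarrow(1)$, proved contrapositively via Tao's theorem that the logarithmic Sarnak conjecture is equivalent to a statement about averages of $\mu(n+h)F(g^h x_0)$ on nilmanifolds (Conjecture~\ref{1}): from a putative counterexample one builds, by concatenating well-separated blocks of polynomial orbits $g_{n,i}^h x_0$ padded with an extra symbol $p$, a single point $y$ whose shift orbit closure violates the conjecture; Proposition~\ref{lem-33}, which bounds the number of $\epsilon$-dense length-$n$ strings of polynomial sequences on $G/\Gamma$ by $C(\epsilon)n^k$, is what forces this orbit closure to have polynomial mean complexity. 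The implication $(3)\Rightarrow(2)$ is then a coding argument: one multiplies by an irrational rotation to obtain the small boundary property (Lindenstrauss), and Proposition~\ref{lem-2.10} shows that coding by a set with small boundary sends a polynomial mean complexity system to a $\{0,1\}$-subshift with polynomial mean complexity. Your Step~1 gestures at this coding but without the small boundary property the indicator approximation is not controlled; that is precisely the role of Lemmas~\ref{lem-2.8} and~\ref{lem-2.9} in the paper.
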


{\color{black} We now briefly describe the main ingredients in the proof of Theorem \ref{thm-1}.
It is clear that (1) implies (2) which in trun implies (3). So it remains to prove
(2) implies (1) and (3) implies (2). To show (2) implies (1), we  use Tao's result as a starting point,
which states that the logarithmic Sarnak conjecture
is equivalent to a conjecture involving the limit of averages on nilmanifolds, see Conjecture \ref{1}.
By assuming that Conjecture \ref{1} fails, we are able then to construct a system  with polynomial mean complexity
which does not satisfy the logarithmic Sarnak conjecture, and hence prove that (2) implies (1).
To construct the system, we need to work on nilsystems and figure out the complexity of polynomial
sequences, see Proposition \ref{lem-33}. Precisely, we will show that for a given $\epsilon>0$, for any $n\in\mathbb{N}$,
the minimal number of $\epsilon$-dense subsets of strings of lengths $n$ of the set of all polynomial
sequences on $G/\Gamma$ is bounded by a polynomial which is only dependent on $\epsilon$ and $G/\Gamma$,
where $G/\Gamma$ is an $s$-step nilmanifold.
With the help of this proposition we finish the construction and thus show that (2) implies (1).
%This is carried out in subsection 2.1.
To show (3) implies (2), we study a t.d.s. with the small boundary
property which was introduced by Lindenstrauss when studying mean dimension. Proposition \ref{lem-2.10}
plays a key role for the proof, which states that for a  t.d.s. $(X,T)$ with  polynomial mean  complexity
and a subset $U$ with small boundary, each $x\in X$ is associated with a point in the shift space
such that the complexity of the closure of the associated points is less than or equals to
that of $(X,T)$. %has polynomial mean  complexity.
The result
of Lindenstrauss and Weiss guarantees that if $(X,T)$ has zero entropy then the prodcut of $X$ with
any irrational rotation on the circle has the small boundary property. By using Proposition \ref{lem-2.10}
and some simple argument we finish the proof that (3) implies (2), and hence the proof of Theorem \ref{thm-1}.

\medskip
While Theorem \ref{thm-1} does not provide a proof of the logarithmic Sarnak conjecture directly, it does
indicate that a t.d.s. with polynomial mean complexity is important for the proof of the conjecture.
So, it will be useful to understand the structure of a subshift with polynomial mean complexity.
We remark that we do not know if the polynomial mean complexity for a subshift can be replaced by the polynomial
block-complexity in Theorem \ref{thm-1}, which is extensively studied in the literature.}
%may be useful for other setting as well.

\medskip
	
For a t.d.s. $(X,T)$ with a metric $d$, $\epsilon>0$ and a $\rho\in {\mathcal{M}}(X,T)$, we let
$$S_n(d,T,\rho,\epsilon)=\min \{ m\in \mathbb{N}:\exists x_1,x_2,\cdots,x_m
\text{ s.t. } \rho\big(\bigcup_{i=1}^m B_{\overline{d}_n}(x_i,\epsilon)\big)>1-\epsilon\}.$$

It is clear that $S_n(d,T,\rho,\epsilon)\le S_n(d, T,X ,\epsilon)$ for any $\rho\in {\mathcal{M}}(X,T)$ and $\epsilon>0$.
We say $(X,T)$ has {\it sub-linear mean measure complexity} if   $$\liminf_{n\rightarrow +\infty}
 \frac{S_n(d,T,\rho,\epsilon)}{n}=0$$ for any $\epsilon>0$ and any $\rho\in\mathcal{M}(X,T)$.
%\begin{rem} \label{rem-linear}
By using the fact that the two-terms logarithmic Chowla conjecture holds \cite{Tao}, i.e.
\begin{align}\label{Tao}\lim_{N\rightarrow \infty}\frac{1}{\ln N}\sum_{n=1}^{N}\frac{\mu(n+h_1)\mu(n+h_2)}{n}=0
\end{align}
for any $0\le h_1<h_2\in \mathbb{N}$, and by using the method of the proof of Theorem 1.1' in \cite{HWY} we have

\begin{thm}\label{rem-linear} The  logarithmic Sarnak conjecture holds for any t.d.s. with sub-linear mean measure complexity.
Consequently, the conjecture holds for any t.d.s. with sub-linear mean complexity.
\end{thm}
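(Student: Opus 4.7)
The plan is to mimic the proof of Theorem 1.1' in \cite{HWY}, substituting the two-term logarithmic Chowla \eqref{Tao}, known by Tao's work, for the two-term Chowla on average used there. Fix $f\in C(X)$ with $\|f\|_{\infty}\le 1$ and $x\in X$, and write $a_n=f(T^n x)$; the goal is
\begin{equation*}
\mathbb{E}^{log}_{n\le N}\mu(n)a_n\longrightarrow 0.
\end{equation*}
By weak-$^*$ compactness, along a subsequence $N_k\to\infty$ the logarithmic empirical measures $\nu_k:=\bigl(\sum_{n\le N_k}\tfrac{1}{n}\bigr)^{-1}\sum_{n\le N_k}\tfrac{1}{n}\delta_{T^n x}$ converge to some $\rho\in\mathcal{M}(X,T)$, and it suffices to rule out a nonzero limit along $N_k$.

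Next I would invoke sub-linear mean measure complexity at $\rho$: given $\epsilon>0$, one can find arbitrarily large $H$ and points $y_1,\dots,y_m\in X$ with $m\le\epsilon H$ such that $\rho\bigl(\bigcup_{i=1}^m B_{\overline{d}_H}(y_i,\epsilon)\bigr)>1-\epsilon$. Using the shift-invariance of $\mathbb{E}^{log}$ (averaging over short windows and applying Cauchy--Schwarz) one obtains
\begin{equation*}
\Bigl|\mathbb{E}^{log}_{n\le N}\mu(n)a_n\Bigr|^{2}\le \mathbb{E}^{log}_{n\le N}\Bigl|\frac{1}{H}\sum_{h=0}^{H-1}\mu(n+h)a_{n+h}\Bigr|^{2}+o_N(1).
\end{equation*}
On the event $T^n x\in B_{\overline{d}_H}(y_i,\epsilon)$ the factor $a_{n+h}$ differs, on average in $h$, from the fixed quantity $f(T^h y_i)$ by $O(\epsilon)$ (by uniform continuity of $f$), and the complementary event has $\nu_k$-mass less than $2\epsilon$ for large $k$.

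Expanding the square on the right and classifying each index $n$ by which ball contains $T^n x$, the contributions reduce to terms of the form $\mathbb{E}^{log}_{n\le N}\mu(n+h_1)\mu(n+h_2)c_{h_1,h_2}(n)$ with $|c_{h_1,h_2}(n)|\le 1$. By \eqref{Tao} every off-diagonal summand $(h_1\ne h_2)$ is $o_N(1)$, while the $H$ diagonal summands contribute at most $H/H^2=1/H$. Together with the $O(\epsilon)$ error from the partition, this yields $|\mathbb{E}^{log}_{n\le N}\mu(n)a_n|^{2}\le C\epsilon+o_N(1)$; letting first $N\to\infty$ and then $\epsilon\to 0$ proves the first assertion. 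The ``consequently'' clause is immediate since $S_n(d,T,\rho,\epsilon)\le S_n(d,T,X,\epsilon)$ for every $\rho\in\mathcal{M}(X,T)$.

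The main obstacle I expect is the passage from the weak-$^*$ limit $\rho$, together with its mean-metric covering, to a bona fide combinatorial partition of the time indices $\{1,\dots,N_k\}$ weighted by $1/n$. Specifically, one needs (i) a Birkhoff-along-the-subsequence control showing that the logarithmic frequency with which $T^n x$ falls outside $\bigcup_i B_{\overline{d}_H}(y_i,\epsilon)$ is $O(\epsilon)$, and (ii) a coordinated choice of the scale $H$ --- which a priori exists only along some subsequence (by the $\liminf$ in the definition of sub-linear mean measure complexity) --- with the logarithmic window of length $N_k$. Once this bookkeeping, which is the technical heart of \cite{HWY}, is transposed to logarithmic averages, the two-term logarithmic Chowla \eqref{Tao} provides exactly the arithmetic cancellation needed to close the argument.
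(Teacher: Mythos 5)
Your overall strategy is the same as the paper's (Appendix \ref{Appendix-A}): pass to a weak-$^*$ limit $\rho$ of the logarithmic empirical measures, use sub-linear mean measure complexity to cover most of $\rho$ by $m<\epsilon L$ balls in the mean metric $\overline{d}_L$, replace $f(T^{n}x)$ on a window of length $L$ by $f(T^{\ell}x_{j_n})$ where $x_{j_n}$ is the centre of the ball containing $T^nx$, and finish with Cauchy--Schwarz and the two-term logarithmic Chowla theorem. However, there is a genuine gap at the final arithmetic step. After expanding the square you arrive at sums of the form $\mathbb{E}^{log}_{n\le N}\mu(n+h_1)\mu(n+h_2)\,c_{h_1,h_2}(n)$ with $c_{h_1,h_2}(n)=f(T^{h_1}x_{j_n})\overline{f(T^{h_2}x_{j_n})}$ depending on $n$ through $j_n$, and you claim these are $o_N(1)$ by \eqref{Tao}. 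That is false as stated: \eqref{Tao} gives cancellation only for coefficients that are constant in $n$, and an average $\mathbb{E}^{log}_{n}\mu(n+h_1)\mu(n+h_2)c(n)$ with an arbitrary bounded weight $c(n)$ need not tend to zero (take $c(n)$ to be the conjugate sign of $\mu(n+h_1)\mu(n+h_2)$). Restricting the average to the set $\{n: j_n=j\}$ for fixed $j$, which is what ``classifying each index $n$ by which ball contains $T^nx$'' amounts to, runs into exactly the same problem, since the indicator of that set is again an uncontrolled bounded weight.

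The missing idea is the positivity (decoupling) trick used in \cite{HWY} and in Lemma \ref{A-lem-3}: before applying Chowla, bound pointwise in $n$
\begin{equation*}
\Bigl|\frac{1}{L}\sum_{\ell=0}^{L-1}\mu(n+\ell)f(T^{\ell}x_{j_n})\Bigr|^{2}\le\sum_{j=1}^{m}\Bigl|\frac{1}{L}\sum_{\ell=0}^{L-1}\mu(n+\ell)f(T^{\ell}x_{j})\Bigr|^{2},
\end{equation*}
so that after exchanging the sums the coefficients $f(T^{\ell_1}x_j)\overline{f(T^{\ell_2}x_j)}$ no longer depend on $n$ and \eqref{Tao} applies to each off-diagonal term. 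The price is that the diagonal now contributes $m\cdot L/L^{2}=m/L$ rather than your $1/H$, and this is precisely where sub-linearity enters: $m=S_L(d,T,\rho,\epsilon_1)<\epsilon L$ makes the diagonal $<\epsilon$. Your accounting (diagonal $=1/H$, off-diagonal handled despite $n$-dependent weights) is internally inconsistent: without decoupling the diagonal is indeed $1/H$ but Chowla does not apply off the diagonal, while with decoupling Chowla applies but the diagonal is $m/L$ and must be controlled by the complexity hypothesis. Incidentally, your worry (ii) about coordinating the scale $H$ with $N_k$ is a non-issue: one fixed good scale $L$ with $S_L(d,T,\rho,\epsilon_1)<\epsilon L$ suffices (it exists since the $\liminf$ is $0$), chosen once and for all before letting $N_k\to\infty$.
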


{\color{black}
We remark that at this moment we are not able to show that the logarithmic Sarnak conjecture
holds for any t.d.s. with linear mean (measure) complexity.
We also remark that if for any $k\in \mathbb{N}$ the $2k$-term logarithmic Chowla conjecture holds, i.e.
\begin{align}\lim_{N\rightarrow \infty}\frac{1}{\ln N}\sum_{n=1}^{N}\frac{\mu(n+h_1)\mu(n+h_2)\ldots \mu(n+h_{2k})}{n}=0\end{align}
for any non-negative integer $0\le h_1\le h_2\le \ldots \le h_{2k}$ with an odd number $j\in \{1,2,\cdots,2k\}$ such that $h_j<h_{j+1}$, then the logarithmic Sarnak conjecture holds for any t.d.s. with sub-polynomial
(leading term $cn^k$) mean measure complexity by using the method of Theorem \ref{rem-linear}. Thus, by Theorem \ref{thm-1} we know that
the logarithmic Sarnak conjecture holds if the logarithmic Chowla conjecture holds. In fact, the two conjectures are equivalent \cite{Tao}.
}

% (see Appendix \ref{Appendix-A}).
%\end{rem}

	As an application of Theorem \ref{rem-linear}, one has the following result.
	\begin{thm} \label{thm-2}  Let $C$ be a non-empty compact subset of $[0,1]$ with packing dimension $<1$. Then
		\begin{align}\label{eq-1}
		\lim\limits_{H\rightarrow +\infty}\limsup\limits_{N\rightarrow +\infty} \mathbb{E}_{n\le N}^{log} \sup_{\alpha\in C}
		|\mathbb{E}_{h\le H}\mu (n + h)e(h\alpha)| = 0,
		\end{align}
    where $e(t):=e^{2\pi i t}$ for any $t\in \mathbb{R}$.	
    \end{thm}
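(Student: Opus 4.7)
The plan is to apply Theorem \ref{rem-linear} to the skew-rotation t.d.s.\ $(X,T) = (C\times\mathbb{T},T)$ given by $T(\alpha,\theta) = (\alpha,\theta+\alpha)$, and then derive \eqref{eq-1} by combining the resulting logarithmic Sarnak conclusion with a Fourier / net argument driven by the two-term logarithmic Chowla identity \eqref{Tao}.

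I would first verify that $(X,T)$ has sub-linear mean measure complexity. Since $\dim_P C < 1$, write $C = \bigcup_i D_i$ with $\overline{\dim}_B D_i < 1$ for every $i$. For any $\rho \in \mathcal{M}(X,T)$ and $\epsilon > 0$, let $\nu = (\pi_C)_*\rho$; pick $M$ with $\nu(\bigcup_{i\le M}D_i) > 1-\epsilon/2$ and set $K = \bigcup_{i\le M}\overline{D_i}$. Then $K$ is compact, $d := \overline{\dim}_B K < 1$, and $\rho(K\times\mathbb{T}) > 1-\epsilon/2$. A direct computation of $\overline{d}_n$ shows that two points of $X$ are $\overline{d}_n$-$\epsilon$-close whenever their $\alpha$-coordinates differ by $O(\epsilon/n)$ and their $\theta$-coordinates differ by $O(\epsilon)$; hence $K\times\mathbb{T}$ admits an $\overline{d}_n$-$\epsilon$-cover of size $N_{\epsilon/n}(K) \cdot N_\epsilon(\mathbb{T}) = O(n^d\epsilon^{-d-1})$, so $\liminf_n S_n(d,T,\rho,\epsilon)/n = 0$. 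Applying Theorem \ref{rem-linear} yields
\[
\lim_{N\to\infty}\mathbb{E}^{log}_{n\le N}\mu(n)f(T^nx) = 0 \quad \text{for every } x\in X \text{ and } f\in C(X).
\]

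Next, I would convert this pointwise logarithmic Sarnak statement into \eqref{eq-1}. Setting $\phi_n(\alpha) = \mathbb{E}_{h\le H}\mu(n+h)e(h\alpha)$, a trigonometric polynomial in $\alpha$ of degree $\le H$, Bernstein's inequality gives a uniform $O(H)$-Lipschitz bound; expanding $|\phi_n(\alpha)|^2$ in $(h_1,h_2)$ and applying \eqref{Tao} term by term yields $\lim_N\mathbb{E}^{log}_{n\le N}|\phi_n(\alpha)|^2 = 6/(\pi^2 H)$ pointwise in $\alpha\in C$. On an $\eta$-net $C_\eta$ of a compact $K\subset C$ with $\overline{\dim}_B K = d < 1$, combining $\sup_K|\phi_n|\le\max_{C_\eta}|\phi_n|+O(\eta H)$ with $(\max|\phi_n|)^2\le\sum_{C_\eta}|\phi_n|^2$ gives
\[
\limsup_{N\to\infty}\mathbb{E}^{log}_{n\le N}\sup_{\alpha\in K}|\phi_n(\alpha)| \;\le\; \sqrt{\frac{6|C_\eta|}{\pi^2 H}} + O(\eta H),
\]
and choosing $\eta = H^{-\gamma}$ with $1 < \gamma < 1/d$ combined with $|C_\eta|\lesssim\eta^{-d}$ makes both terms vanish as $H\to\infty$.

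The hard part will be to upgrade this estimate from the compact piece $K = \bigcup_{i\le M}\overline{D_i}$ (which has $\overline{\dim}_B K < 1$) to the full set $C$: the hypothesis $\dim_P C < 1$ alone does not force $\overline{\dim}_B C < 1$, while \eqref{eq-1} demands a topological sup over all of $C$. I expect to bridge this by exploiting the measure-theoretic nature of the logarithmic Sarnak conclusion supplied by Theorem \ref{rem-linear}: for each invariant measure $\rho \in \mathcal{M}(X,T)$, the relevant control of $|\phi_n|$ only needs to hold on $\rho$-typical $\alpha\in C$, which by the packing decomposition lie in some compact $K$ of upper box dimension $< 1$; the topological sup in \eqref{eq-1} is then recovered through the continuity of $\phi_n$, the compactness of $C$, and an $\epsilon$-exhaustion over the countable family $\{\overline{D_i}\}_i$.
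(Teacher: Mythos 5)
Your first step---establishing sub-linear mean measure complexity for the skew product over $C$ via a countable decomposition $C=\bigcup_i D_i$ with $\sup_i\overline{\dim}_B D_i<1$---is sound, and is essentially the paper's Lemma \ref{LEMMA}, which reaches the same conclusion through the Fan--Lau--Rao characterization of $Dim_P$ via $Dim^*m$ together with a Vitali covering argument. The trouble is everything after that. Your second paragraph (Bernstein plus an $\eta$-net plus the two-term identity \eqref{Tao}) never uses the Sarnak conclusion you just derived from Theorem \ref{rem-linear}; it is exactly McNamara's box-dimension argument, and, as you concede, it only controls the sup over a compact $K$ with $\overline{\dim}_B K<1$, not over $C$. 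The bridge you sketch in the final paragraph does not close this gap: the maximizer $\alpha_n$ of $|\phi_n|$ depends on $n$ and may for infinitely many $n$ lie in $C\setminus\bigcup_{i\le M}D_i$, a set that is $\nu$-small for each fixed measure but can be topologically large (even dense) in $C$; continuity of $\phi_n$ cannot rescue you, since its Lipschitz constant is $O(H)$, so upgrading from a full-measure set to all of $C$ would require that set to be $o(1/H)$-dense, which nothing guarantees. Moreover, in your setup there is no invariant measure $\rho$ canonically attached to the sequence of maximizers, so ``$\rho$-typical $\alpha$'' has no referent.

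The missing idea---which is the actual content of the paper's proof---is to argue by contradiction and encode the near-maximizing frequencies into a single point of a sequence space. Assuming \eqref{eq-1} fails, Lemma \ref{3-7} extracts scales $H_i$ much smaller than $N_i$, sparse sets $S_i'\subset[1,N_i]$ with gaps at least $2H_i$, and frequencies $\alpha_{n,i}$ nearly attaining the supremum, and defines $y\in(\mathbb{T}\cup\{p\})^{\mathbb{Z}}$ by $y(n+h)=e(h\alpha_{n,i})$ on these blocks and $y=p$ elsewhere, so that $\limsup_{N}|\mathbb{E}^{log}_{n\le N}\mu(n)\widetilde F(\sigma^n y)|>0$. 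The $n$-dependence of the maximizer is thereby absorbed into a single orbit, and the double limit $\lim_H\limsup_N$ with an inner supremum is converted into one logarithmic Sarnak statement. Lemma \ref{lem} then shows that every invariant measure of the orbit closure $(X_y,\sigma)$ is supported on $\widetilde C$, a factor of your skew product, so Lemma \ref{lem-1} transfers the complexity bound to $(X_y,\sigma)$, and Theorem \ref{rem-linear} applied to $(X_y,\sigma)$---not to $C\times\mathbb{T}$ directly---yields the contradiction. Without this construction your plan does not go through.
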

%	\begin{rem}
	We remark that in \cite[Theorem 1.13]{M}, McNamara proved that \eqref{eq-1}
		holds for  a non-empty compact subset $C$ of $[0,1]$ with upper box dimension $< 1$. 	So Theorem \ref{thm-2} strengthenes
the result in \cite{M}.
%	\end{rem}
%\begin{rem} \label{rem-sub}

We say  a t.d.s. $(X,T)$ has {\it sub-polynomial mean measure complexity} if for any $\tau>0$ and $\rho\in \mathcal{M}(X,T)$
$$\liminf \limits_{n\rightarrow +\infty} \frac{S_n(d,T,\rho,\epsilon)}{n^\tau}=0$$
	for any $\epsilon>0$. In \cite{HWY}, Huang, Wang and Ye showed that the
Sarnak conjecture holds for any t.d.s. with sub-polynomial mean  measure complexity.
%\end{rem}
As an application of the above result in \cite{HWY}, one has the following result.
	\begin{thm} \label{thm-5}  Let $C$ be a non-empty compact subset of $[0,1]$ with packing dimension $=0$. Then
		\begin{align}\label{eq-2}
		\lim\limits_{H\rightarrow +\infty}\limsup\limits_{N\rightarrow +\infty} \mathbb{E}_{n\le N} \sup_{\alpha\in C}
		|\mathbb{E}_{h\le H}\mu (n + h)e(h\alpha)| = 0.
		\end{align}
	\end{thm}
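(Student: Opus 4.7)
Proof proposal. My plan is to mirror the derivation of Theorem \ref{thm-2} from Theorem \ref{rem-linear}, this time invoking the theorem of Huang--Wang--Ye (ordinary Sarnak for sub-polynomial mean measure complexity) in place of Theorem \ref{rem-linear}. The upgrade in complexity class is precisely what accommodates the weaker dimension hypothesis $\dim_P C = 0$ in place of $\dim_P C < 1$, at the cost of losing the logarithmic weighting in the outer average.

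First I would attach to $C$ the rotation skew-product $(Y, T)$ with $Y = C \times \mathbb{T}$ and $T(\alpha, x) = (\alpha, x + \alpha)$; every $T$-invariant probability measure $\rho$ disintegrates as $\rho = \int (\delta_\alpha \otimes m_\alpha)\,d\nu(\alpha)$ with $\nu = \pi_\ast \rho$ supported in $C$. To verify that $(Y, T)$ has sub-polynomial mean measure complexity, fix $\tau > 0$, $\epsilon > 0$ and $\rho$, and use $\dim_P C = 0$ to write $C = \bigcup_{i \ge 1} E_i$ with $\overline{\dim}_B E_i < \tau$; choose $I$ with $\nu(\bigcup_{i \le I} E_i) > 1 - \epsilon$ and set $F := \overline{\bigcup_{i \le I} E_i}$, which is compact with $\overline{\dim}_B F < \tau$. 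Mean-metric $\epsilon$-covers of $F \times \mathbb{T}$ can then be built by partitioning $F$ into $\ll_\epsilon n^\tau$ arcs of length $\epsilon/n$ and $\mathbb{T}$ into $O(1/\epsilon)$ arcs, exploiting that each rotation is a fibrewise isometry. This yields $S_n(d, T, \rho, \epsilon) \ll_\epsilon n^\tau$, and since $\tau > 0$ was arbitrary, $(Y, T)$ has sub-polynomial mean measure complexity.

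Next I would apply the HWY theorem to obtain $\lim_{N \to \infty} \mathbb{E}_{n \le N} \mu(n) f(T^n y) = 0$ for every $f \in C(Y)$ and every $y \in Y$. To translate this Sarnak orthogonality into the sup-short-average statement \eqref{eq-2}, I would linearise the modulus: for each $n$ pick (measurably) an almost-maximiser $\alpha_n^\ast \in C$ together with a unimodular phase $\eta_n = e^{-i \arg g_n(\alpha_n^\ast)}$, where $g_n(\alpha) = \mathbb{E}_{h \le H}\mu(n+h)e(h\alpha)$, so that $\sup_{\alpha \in C} |g_n(\alpha)| = \mathrm{Re}\bigl(\eta_n g_n(\alpha_n^\ast)\bigr) + o(1)$. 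Expanding and swapping summation order via $m = n + h$ then rewrites the target Ces\`{a}ro average as $\mathbb{E}_{m \le N} \mu(m)\, d_m + O(H/N)$, where $d_m = \frac{1}{H} \sum_{h=1}^H \cos(2\pi h \alpha_{m-h}^\ast - \theta_{m-h})$ is a bounded real sequence determined by the selectors (with $\theta_n = \arg g_n(\alpha_n^\ast)$). It remains to realise $(d_m)$ as $F(T^m y_0)$ for a continuous $F$ on an extension of $(Y, T)$ retaining sub-polynomial mean measure complexity, so that HWY's theorem drives the average to zero.

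The main obstacle will be precisely this last encoding step: the selectors $\{(\alpha_n^\ast, \eta_n)\}$ are a priori functions of the M\"{o}bius function itself, so $(d_m)$ is not manifestly the orbit of a point in a low-complexity system. The remedy, in parallel with the passage from Theorem \ref{rem-linear} to Theorem \ref{thm-2}, is to embed the selector sequence into the shift extension of $Y$ and to verify that sub-polynomial mean measure complexity persists on the topological closure of any such orbit under $\dim_P C = 0$. Transferring the complexity bound through this enlargement -- where the packing-dimension-zero hypothesis is used decisively via the decomposition $C = \bigcup_i E_i$ of uniformly small upper box dimension -- is the technical heart of the argument.
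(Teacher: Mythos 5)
Your overall strategy (skew product over $C$, sub-polynomial mean measure complexity, then the Huang--Wang--Ye theorem) is the same as the paper's, and your verification of the complexity of $C\times\mathbb{T}$ via a countable decomposition of $C$ into sets of small upper box dimension is a sound variant of the paper's Lemma \ref{LEMMA}. But the step you yourself flag as the ``technical heart'' --- realising the linearised supremum as $F(T^m y_0)$ for a point in a low-complexity system --- is exactly where the proof lives, and your sketched remedy does not work. If you select an almost-maximiser $\alpha_n^\ast\in C$ for \emph{every} $n$, the selector sequence is an essentially arbitrary element of $C^{\mathbb{Z}}$, and the orbit closure of its encoding can have positive entropy even when $\dim_P C=0$: already for $C=\{0,1/2\}$ one can obtain the full two-shift. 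So the claim that ``sub-polynomial mean measure complexity persists on the topological closure of any such orbit'' is false as stated; the hypothesis on $C$ controls the alphabet, not the subshift that an unconstrained selector sequence generates.

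The paper's device, which your argument is missing, is sparsification. Arguing by contradiction, one extracts $\tau>0$, scales $H_i\to\infty$, $N_i$, phases $\alpha_{n,i}\in C$ and a sparse set $S_i'\subset[1,N_i]$ of starting positions with consecutive gaps at least $2H_i$ and $\#S_i'\gtrsim \tau N_i/H_i$, on which $\mathrm{Re}\bigl(e(\beta)\mathbb{E}_{h\le H_i}\mu(n+h)e(h\alpha_{n,i})\bigr)>\tau/2$. One then defines a single point $y\in(\mathbb{T}\cup\{p\})^{\mathbb{Z}}$ equal to the rotation orbit segment $e\bigl((j-n)\alpha_{n,i}\bigr)$ on each window $(n,n+H_i]$ with $n\in S_i'$, and equal to the blank symbol $p$ elsewhere (Lemma \ref{A-3-7}, Property $(*)$). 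Because the windows have lengths tending to infinity and are separated by comparable blank gaps, every invariant measure on $X_y=\overline{\{\sigma^n y\}}$ is supported on the set of pure rotation orbits $\widetilde C\cup\{p\}^{\mathbb{Z}}$ (Lemma \ref{lem}), which is a factor of $C\times\mathbb{T}\cup\{p\}$ and hence has sub-polynomial mean measure complexity by Lemma \ref{lem-1}; meanwhile a direct computation shows $\limsup_N\bigl|\mathbb{E}_{n\le N}\mu(n)\widetilde F(\sigma^n y)\bigr|\gtrsim\tau^2>0$, contradicting Huang--Wang--Ye. Without this sparse, blank-padded construction your reduction does not close.
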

	\medskip
	The paper is organized as follows. In Section 2, we prove Theorem \ref{thm-1}.  In Section 3, we prove Theorem \ref{thm-2}.
In Appendix \ref{Appendix-A} and \ref{Appendix-B}, we prove Theorem \ref{rem-linear} and Theorem \ref{thm-5}.
	
	\section{Proof of Theorem \ref{thm-1} }
	In this section, we prove Theorem \ref{thm-1}. As we said in the introduction,
%It is clear that $(1)\Longrightarrow(2)\Longrightarrow(3)$ in Theorem \ref{thm-1}. So it
it remains to prove $(2)\Longrightarrow(1)$ which is done in subsection \ref{se-1}, and
$(3)\Longrightarrow(2)$ which is carried out in subsection \ref{se-2}.
	
\subsection{Proof of (2) implies (1) in Theorem \ref{thm-1}}\label{se-1}
   We have explained in the introduction that the starting point of the proof is the Tao's
   result which gives an equivalent statement of the logarithmic Sarnak  conjecture. We will first
   introduce the result, then derive some result concerning the complexity of polynomial sequences
   and finally give the proof. Let us begin with basic notions related to nilmanifolds.
	
	Let $G$ be a group. For $g, h\in G$, we write $[g, h] =
	ghg^{-1}h^{-1}$ for the commutator of $g$ and $h$ and we write
	$[A,B]$ for the subgroup spanned by $\{[a, b] : a \in A, b\in B\}$.
	The commutator subgroups $G_j$, $j\ge 1$, are defined inductively by
	setting $G_1 = G$ and $G_{j+1} = [G_j ,G]$. Let $s \ge 1$ be an
	integer. We say that $G$ is {\it $s$-step nilpotent} if $G_{s+1}$ is
	the trivial subgroup.
	
	Recall that {\it an $s$-step nilmanifold} is a manifold of the form
	$G/\Gamma$ where $G$ is a connected, simply connected $s$-step nilpotent Lie group, and $\Gamma$
is a cocompact discrete subgroup of $G$. Tao shows that the logarithmic Sarnak  conjecture  is
equivalent to the following  conjecture \cite{Tao1}.
	\begin{conj}\label{1} For any $s\in\mathbb{N}$, an $s$-step nilmanifold $G/\Gamma$, a
$Lip$-continuous function $F:G/\Gamma\to\mathbb{C}$ and $x_0\in G/\Gamma$, one has
	 $$\lim_{H\to+\infty}\limsup_{N\to+\infty}\mathbb{E}_{n\le N}^{log}\sup_{g\in G}|\mathbb{E}_{h\le H}\mu(n+h)F(g^hx_0)|=0.$$
	\end{conj}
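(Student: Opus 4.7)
The plan is to attack Conjecture \ref{1} by combining the structural theory of nilsequences with Gowers-uniformity estimates for the M\"obius function. A first reduction is to pass from a general Lip-continuous $F$ to a \emph{vertical character}, i.e.\ a function transforming by a multiplicative character under the action of the centre $G_s$: by the spectral decomposition on $G/\Gamma$ (Corwin--Greenleaf, Green--Tao), the Lip-continuous functions are approximated by finite sums of smooth vertical characters, with explicit Lip-norm control on the error. After translating we may also take $x_0$ to be the identity coset. With these reductions in hand, the sequence $h\mapsto F(g^h x_0)$ is a concrete nilsequence of degree $\le s$ whose nilpotent ``complexity'' depends only on $F$.

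The second step is a major/minor arcs dichotomy in the parameter $g$. Fix a small $\eta>0$. Either $g$ is \emph{$\eta$-rational} in the Green--Tao sense, i.e.\ close to an element whose orbit closure is a sub-nilmanifold of bounded complexity $M=M(\eta)$, in which case $h\mapsto F(g^h x_0)$ is within $O(\eta)$ of a periodic sequence of period at most $M$ and so $\mathbb{E}_{h\le H}\mu(n+h)F(g^h x_0)$ is governed by the prime number theorem in arithmetic progressions of modulus $\le M$; or else $g$ is \emph{equidistributed}, and by Leibman's theorem together with the $U^{s+1}$-inverse theorem of Green--Tao--Ziegler, the nilsequence $h\mapsto F(g^h x_0)$ has small Gowers $U^{s+1}[H]$-norm and hence its correlation with $\mu$ on $[n,n+H]$ is negligible by the Gowers-uniformity estimate for $\mu$.

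The third step is the delicate one: the supremum over $g\in G$ sits inside the logarithmic average, so the dichotomy must be applied \emph{$n$-by-$n$}, with the split (and the implicit constants) allowed to depend on $n$. The natural device is Tao's entropy-decrement argument that already underlies the two-point logarithmic Chowla identity \eqref{Tao}; one would seek a nilpotent analogue producing, for all but a set of $n$ of logarithmic density zero, an effective quantitative equidistribution statement for $h\mapsto F(g^h x_0)$ that is uniform in $g$. Combined with Matom\"aki--Radziwi{\l}{\l}-type short-interval cancellation for $\mu$ and the fact that on the exceptional set the trivial bound $1$ costs only a logarithmically small factor, this would let one rerun the major/minor split on the good set of $n$ and let $H\to+\infty$.

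The main obstacle is precisely this third step: the uniformity over $g\in G$ is the content of the (still open) $1$-Fourier uniformity conjecture for nilsequences, whose currently known cases by Matom\"aki--Radziwi{\l}{\l}--Tao cover only the abelian setting $s=1$ on logarithmic average and not higher-step nilmanifolds. Any realistic route to Conjecture \ref{1} seems to require either an entropy-decrement argument tailored to nilsequences, or a breakthrough on the polynomial structure of $\mu$ at nilpotent scales; at present neither is available, which is why the present paper takes the complementary tack of \emph{reducing} the logarithmic Sarnak conjecture to symbolic systems of polynomial mean complexity rather than attempting Conjecture \ref{1} directly.
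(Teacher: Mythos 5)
There is no proof to compare against: the statement you were asked about is Conjecture \ref{1} itself, which the paper does not prove and does not claim to prove. Its only role in the paper is as the pivot, via Tao's theorem that it is \emph{equivalent} to the logarithmic Sarnak conjecture, for the contrapositive argument in the proof that (2) implies (1) of Theorem \ref{thm-1}: assuming Conjecture \ref{1} fails, the authors use the witnessing data $(G/\Gamma, F, x_0, g_{n,i})$ to build a point $y$ whose orbit closure $(X_y,\sigma)$ has polynomial mean complexity (via Proposition \ref{lem-33}) yet violates the logarithmic Sarnak conjecture. So the correct answer to the task ``prove this statement'' is that it is an open conjecture, and your proposal, to its credit, ultimately says exactly that.

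As a proof attempt, however, your sketch has a genuine and decisive gap, which you yourself identify: the supremum over $g\in G$ inside the logarithmic average is precisely the higher-step Fourier uniformity problem, and no entropy-decrement argument or inverse-theorem input currently handles it beyond the abelian case on logarithmic average. I would add that even your minor-arc step is not a matter of citation: the Gowers-uniformity estimates for $\mu$ that pair with the Green--Tao--Ziegler inverse theorem are long-interval statements over $[1,N]$, whereas here one needs cancellation of $\mu$ against nilsequences on the short windows $[n+1,n+H]$ with $H$ fixed while $N\to\infty$, uniformly in $g$ --- this is far beyond Matom\"aki--Radziwi{\l}{\l}-type technology for a single character, let alone for all nilcharacters simultaneously. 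So the proposal is a reasonable survey of how one might attack Conjecture \ref{1}, but it does not constitute a proof, and it is orthogonal to what the paper actually does with the statement, namely reduce the logarithmic Sarnak conjecture to symbolic systems of polynomial mean complexity by negating Conjecture \ref{1} rather than establishing it.
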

	
	Let $G/\Gamma$ be an $m$-dimensional nilmanifold (i.e. $G$ is a connected, simply connected 	
   $s$-step nilpotent Lie group with unit element $e$ and $\Gamma$ is a cocompact discrete
	subgroup of $G$) and let $G=G_1\supset \ldots\supset G_s\supset
	G_{s+1}=\{e\}$ be the lower central series filtration. 	We will make use of the Lie algebra $\mathfrak{g}$ over
	$\mathbb{R}$ of $G$ together with the exponential map $\exp: \mathfrak{g}\rightarrow G$. Since $G$ is a
     connected, simply connected $s$-step nilpotent Lie group, the exponential map is a diffeomorphism
	\cite{CG, M}. 	A basis $\mathcal {X}= \{X_1, \ldots ,X_m\}$ for the Lie algebra $\mathfrak{g}$ over
	$\mathbb{R}$ is called a {\em Mal'cev basis} for $G/\Gamma$ if the following
	four conditions are satisfied:
	
	\begin{enumerate}
		\item  For each $j = 0,\ldots,m-1$ the subspace $\eta_j := \text{Span}(X_{j+1}, \ldots
		,X_m)$ is a Lie algebra ideal in $\mathfrak{g}$, and hence $H_j := \exp\
		\eta_j$ is a normal Lie subgroup of $G$.
		
		\item  For every $0< i\le s$, there is $l_{i-1}$ such that $G_i = H_{l_{i-1}}$. Thus
			$0=l_0<l_1<\ldots<l_{s-1}\le m-1$.
		
		\item Each $g\in G$ can be written uniquely as
		$\exp(t_1X_1) \exp(t_2X_2)\ldots \exp(t_mX_m)$, for some $t_i\in \mathbb{R}$.
		
		\item $\Gamma$ consists precisely of those elements which, when written in
		the above form, have all $t_i\in \mathbb{Z}$.
	\end{enumerate}

Note that such a basis exists \cite{CG, GT, Mal}. Now we fix a Mal'cev basis $\mathcal {X}= \{X_1, \ldots ,X_m\}$
of $G/\Gamma$. Define $\psi: G\rightarrow \mathbb{R}^m$ such that if $g=\exp(t_1X_1)\cdots\exp(t_mX_m)\in G$, then
$$\psi(g)=(t_1,\cdots,t_m)\in\mathbb{R}^m.$$
Moreover, let $|\psi(g)|=\max_{1\le i\le m}|t_i|$. The following metrics on $G$ and $G/\Gamma$ are introduced in \cite{GT}.

\begin{de} We define $d: G \times G\rightarrow \mathbb{R}$ to be the largest metric
such that $d(x, y)\le |\psi(xy^{-1})|$ for all $x, y\in G$. More explicitly,
we have
$$d(x, y) = \inf\Big\{\sum_{i=1}^n\min\{|\psi(x_{i-1}x_i^{-1})|,
|\psi(x_ix_{i-1}^{-1})|\}: x_0,\ldots, x_n\in G;
x_0=x,x_n=y\Big\}.$$ This descends to a metric on $G/\Gamma$ by
setting
$$d(x\Gamma,y\Gamma):=\inf\{d(x', y'): x', y'\in G; x'=x\ (\text{mod}\ \Gamma);
y'=y\ (\text{mod}\ \Gamma)\}.$$ It turns out that this is indeed a
metric on $G/\Gamma$ (see \cite{GT}). Since $d$ is
right-invariant (i.e., $d(x,y)=d(xg,yg)$ for all $x,y,g\in G$), we also have
$$d(x\Gamma, y\Gamma) = \inf_{\gamma\in \Gamma}d (x, y\gamma).$$
\end{de}

The following lemma appears in \cite[Lemma 7.5 and Lemma 7.6]{DDMSY}.
	\begin{lem}\label{lem-estimate-nil}
Let $G$ be 	a connected, simply connected $s$-step nilpotent Lie group. Then there exist real
polynomials $P_1:\mathbb{R}^3\rightarrow \mathbb{R}$, $P_2:\mathbb{R}\rightarrow \mathbb{R}$ and
$P_3:\mathbb{R}^2\rightarrow \mathbb{R}$ with positive coefficients such that for $x,y,g,h\in G$
		\begin{enumerate}
			\item $d(gx,gy)\le P_1(|\psi(g)|,|\psi(x)|,|\psi(y)|)d(x,y)$;
			\item $|\psi(g^n)|\le P_2(n)|\psi(g)|^{n_G}$, where $n_G$ is a positive constant determined by $G$;
			\item $|\psi(gh)|\le P_3(|\psi(g)|,|\psi(h)|)$.
		\end{enumerate}
	\end{lem}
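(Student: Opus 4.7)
All three bounds will be reduced to the fact that the group operations on $G$ are polynomial maps in Mal'cev coordinates, a consequence of the Baker--Campbell--Hausdorff (BCH) formula truncating in the $s$-step nilpotent Lie algebra $\mathfrak{g}$. I would first establish a structural lemma: there exist vector-valued polynomials $M:\mathbb{R}^{2m}\to\mathbb{R}^m$ and $I:\mathbb{R}^m\to\mathbb{R}^m$ (depending only on $G$ and the Mal'cev basis $\mathcal{X}$) such that $\psi(gh)=M(\psi(g),\psi(h))$ and $\psi(g^{-1})=I(\psi(g))$ for all $g,h\in G$. This is proved by passing through exponential coordinates of the first kind $g=\exp(u_1X_1+\cdots+u_mX_m)$: BCH makes $u\mapsto\psi(\exp(\sum u_iX_i))$ a polynomial diffeomorphism (a triangular map plus polynomial corrections from iterated brackets, all finite since $\mathfrak{g}$ is $s$-step nilpotent), and multiplication in exponential coordinates of the first kind is polynomial by BCH. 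Composing gives the claim.

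Granted this structural result, (3) follows by majorizing each component $M_i(\psi(g),\psi(h))$ by a polynomial in $|\psi(g)|$ and $|\psi(h)|$ with nonnegative coefficients. For (2), write $g^n$ in exponential coordinates of the first kind as $\exp(n\log g)$, whose coordinates are $n$ times those of $\log g$; then convert back to Mal'cev coordinates via the polynomial diffeomorphism above. The outcome is polynomial in $n$ (degree at most $s$, coming from the BCH iteration depth) and in $\psi(g)$ (of some total degree $n_G$ depending only on $G$). Separating the $n$-dependence into $P_2(n)$ and bounding the $\psi(g)$-contribution by $|\psi(g)|^{n_G}$ yields (2).

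For (1), I would invoke the path definition of $d$: given $x,y\in G$ and a near-optimal path $x=x_0,\ldots,x_n=y$, the translated path $gx=gx_0,\ldots,gx_n=gy$ has cost $\sum_i\min\{|\psi(g x_{i-1}x_i^{-1}g^{-1})|,|\psi(g x_ix_{i-1}^{-1}g^{-1})|\}$. The key estimate needed is a conjugation bound $|\psi(gzg^{-1})|\le C(|\psi(g)|)\,|\psi(z)|$ for $|\psi(z)|$ small, which follows because $\mathrm{Ad}(g)=\exp(\mathrm{ad}(\log g))$ has matrix entries that are polynomial in $\psi(g)$ (since $\mathrm{ad}$ is nilpotent on $\mathfrak{g}$) and $\log g$ depends polynomially on $\psi(g)$. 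Subdividing the path so that each step is small enough for this linearized estimate to apply, and controlling the intermediate points $x_i$ in terms of the endpoints, one sums to obtain $d(gx,gy)\le P_1(|\psi(g)|,|\psi(x)|,|\psi(y)|)\,d(x,y)$.

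\textbf{Main obstacle.} The technical heart of the proof is (1): the global polynomial bound on $|\psi(gzg^{-1})|$ coming from (3) grows super-linearly in $|\psi(z)|$, while one needs a bound proportional to $|\psi(z)|$ to get a metric Lipschitz estimate. The resolution hinges on the local linear behavior of $\mathrm{Ad}(g)$, together with the observation that in the infimum defining $d(x,y)$ one may restrict to paths whose intermediate points lie in a polynomially-controlled neighborhood of $\{x,y\}$; the dependence of $P_1$ on $|\psi(x)|$ and $|\psi(y)|$ is precisely what absorbs this localization.
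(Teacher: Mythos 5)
The paper offers no proof of this lemma at all: it is imported verbatim from \cite{DDMSY} (Lemmas 7.5 and 7.6). So there is nothing in-paper to compare against, and your proposal has to be judged on its own terms; on those terms it is essentially the standard argument and it is sound. The structural input (multiplication and inversion are polynomial in Mal'cev coordinates of the second kind, obtained by passing through exponential coordinates of the first kind and the BCH series, which terminates by $s$-step nilpotency) is correct, and (3) follows as you say by majorizing coefficients. Your treatment of (1) correctly identifies the crux: the global bound from (3) is superlinear, so one must use the linearized conjugation estimate $|\psi(gzg^{-1})|\le C(|\psi(g)|)\,|\psi(z)|$ for $|\psi(z)|\le 1$, which holds because $\mathrm{Ad}(g)$ is polynomial in $\psi(g)$ and the coordinate change $\Phi$ and its inverse are polynomials vanishing at $0$, hence Lipschitz near $0$ with polynomially controlled constants. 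Two simplifying remarks: you never need to control the intermediate points $x_i$ of a near-optimal chain, only the increments $x_{i-1}x_i^{-1}$, and these are automatically small in the relevant sense because each term of the chain is at most the total cost, which is at most $|\psi(xy^{-1})|+\epsilon$ and hence polynomially bounded in $|\psi(x)|,|\psi(y)|$ by (3); and the subdivision of an increment $\exp(w)$ into $k\asymp 1+|w|$ copies of $\exp(w/k)$ multiplies its cost by a factor polynomial in that same bound. This is exactly where the $(|\psi(x)|,|\psi(y)|)$-dependence of $P_1$ enters, as you anticipated in your ``main obstacle'' paragraph.

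The one genuine wrinkle is the last step of (2). Your method yields $|\psi(g^n)|\le\sum_k Q_k(n)\,|\psi(g)|^k$, i.e.\ a bound of the form $P_2(n)\max\{1,|\psi(g)|\}^{n_G}$; it cannot produce the single monomial $|\psi(g)|^{n_G}$, and in fact the inequality as transcribed fails for nonabelian $G$. In the Heisenberg group with $g=\exp(aX)\exp(aY)$ one computes $\psi(g^n)=\bigl(na,\,na,\,\tfrac{a^2}{2}(n-n^2)\bigr)$: the large-$a$ behaviour forces $n_G\ge 2$, while the small-$a$ behaviour ($|\psi(g^n)|=n|a|$) rules out $n_G\ge 2$. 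This is a defect of the quoted statement rather than of your approach: in the only place the paper uses (2) (the proof of Lemma \ref{lem-44}, where $g$ ranges over a compact set $K$ and the bound is immediately relaxed to $w^{n_G}P_2(n)$ with $w=\max_{K}|\psi|$), the corrected form $P_2(n)\max\{1,|\psi(g)|\}^{n_G}$ serves identically. You should state and prove that version rather than paper over the final ``bounding the $\psi(g)$-contribution by $|\psi(g)|^{n_G}$'' step.
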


Let $G$ be 	a  connected, simply connected $s$-step nilpotent Lie group with unit element $e$
and $G= G_0= G_1$, $G_{i+1}=[G,G_i]$ be the lower central series filtration of $G$. It is clear that $\{e\}=G_{s+1}=G_{s+2}=\cdots$.	
By a  {\it polynomial sequence adapted to the lower central series filtration}  we mean a
map ${\bf g} : \mathbb{Z}\to G$ such that $\partial_{h_i},\cdots \partial_{h_1} {\bf g}\in G_i$ for all $i>0$
and $h_1,\cdots ,h_i\in \mathbb{Z}$, where
$$\partial_h {\bf f}(n) := {\bf f}(n+h){\bf f}(n)^{-1}$$
for any map ${\bf f} : \mathbb{Z}\to G$ and $n,h\in \mathbb{Z}$.
Let $Poly(G)$ be the collection
of all polynomial sequences of $G$ adapted to the lower central series filtration.
It is well known that a polynomial sequence ${\bf g} : \mathbb{Z}\to G$ adapted to the lower central series filtration
has unique Taylor coefficients $g_j \in G_j$ for each $0\le j \le s$ such that
	$${\bf g}(n)=g_0^{\tbinom{n}{0}}g_1^{\tbinom{n}{1}}\cdots g_s^{\tbinom{n}{s}},$$
	where $\tbinom{n}{0}\equiv 1$ (see for example \cite[Lemma B.9]{GTZ} and \cite[P.240 Theorem 8]{HK}).
    In this case we say that $g_i\in G_i$
	for $i=0,1,\cdots,s$ is the coefficients of ${\bf g}$.

	Using Lemma \ref{lem-estimate-nil} (2) and (3), it is not hard to verify by induction that there exists a real
polynomial $Q:\mathbb{R}^{s+2}\rightarrow \mathbb{R}$ with positive coefficients such that
	\begin{align}\label{plynomial_Q}
	|\psi({\bf g}(n))|\le Q(n,|\psi(g_0)|,\cdots,|\psi(g_s)|)
	\end{align}
	for $n\in \mathbb{Z}_+$.

We note that for $g,h\in G$, ${\bf g} : \mathbb{Z}\to G$ defined by
	${\bf g}(n)=g^nh$ for each $n\in\mathbb{N}$ is a polynomial sequence adapted to the lower
central series filtration since
$${\bf g}(n)=g^nh=h^{\tbinom{n}{0}}(h^{-1}gh)^{\tbinom{n}{1}}.$$
	
	For a non-empty subset $K$ of $G$, we say  ${\bf g}\in Poly(G)$ {\it a polynomial
sequence with coefficients in $K$}, if 	$g_i\in G_i\cap K$ for $i=0,1,\cdots, s$,
where $\{g_i\}_{i=0}^s$ are the coefficients  of ${\bf g}$. Green, Tao and Ziegler
proved the following lemma (see \cite[Lemma C.1]{GTZ} and \cite[P. 243 Proposition 12]{HK}).
\begin{lem}\label{lem-2-5}Let $G$ be a connected, simply connected $s$-step
nilpotent Lie group and $\Gamma$ be a cocompact discrete subgroup of $G$.  Then
there exists a compact subset $K$ of $G$ such that any polynomial sequence ${\bf g}\in  Poly(G)$ can be factorised as
${\bf g}={\bf g}'{\bf \gamma}$,  where ${\bf g}'\in  Poly(G)$  is a polynomial sequence with coefficients in $K$
and ${\bf \gamma}\in  Poly(G)$ is a polynomial sequence  with coefficients in $\Gamma$.
\end{lem}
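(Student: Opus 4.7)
The plan is to proceed by induction on the nilpotency class $s$. The key standard input, afforded by the Mal'cev basis, is that for each $1\le i\le s$ the intersection $\Gamma\cap G_i$ is a cocompact discrete subgroup of $G_i$, so each $G_i$ carries a bounded fundamental domain $F_i$ for $\Gamma\cap G_i$. In the base case $s=1$ the group $G$ is abelian, a polynomial sequence has the form ${\bf g}(n)=g_0 g_1^n$, and writing $g_i=g_i'\gamma_i$ with $g_i'\in F_1$, $\gamma_i\in\Gamma$, commutativity yields ${\bf g}(n)=\bigl(g_0'(g_1')^n\bigr)\bigl(\gamma_0\gamma_1^n\bigr)$, the desired factorisation with $K=F_1$.

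For the inductive step I would exploit the fact that $G_s$ is central in $G$, since $[G_s,G]=G_{s+1}=\{e\}$. Consider the quotient $\pi:G\to G/G_s$: the image is a connected, simply connected $(s-1)$-step nilpotent Lie group with cocompact discrete subgroup $\pi(\Gamma)$, and $\pi\circ{\bf g}$ is a polynomial sequence whose Taylor coefficients are $\pi(g_i)\in(G/G_s)_i=G_i/G_s$ for $0\le i\le s-1$. By the inductive hypothesis $\pi\circ{\bf g}={\bf h}\cdot{\bf \delta}$ with ${\bf h}$ having coefficients in a fixed compact set $\tilde K\subset G/G_s$ and ${\bf \delta}$ with coefficients in $\pi(\Gamma)$. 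Using the section of $\pi$ given by the Mal'cev coordinates $g\mapsto\exp(t_1X_1)\cdots\exp(t_{l_{s-1}}X_{l_{s-1}})$, lift ${\bf h}$ and ${\bf \delta}$ to polynomial sequences $\tilde{\bf h},\tilde{\bf \delta}\in Poly(G)$ with coefficients in a compact set and in $\Gamma$, respectively. The correction ${\bf c}(n):=\tilde{\bf \delta}(n)^{-1}\tilde{\bf h}(n)^{-1}{\bf g}(n)$ then takes values in $G_s$, and an expansion in the Taylor basis confirms that ${\bf c}\in Poly(G_s)$. Applying the base case to the abelian pair $(G_s,\Gamma\cap G_s)$, factor ${\bf c}={\bf c}'{\bf c}''$ with ${\bf c}'$ having coefficients in a compact subset of $G_s$ and ${\bf c}''$ with coefficients in $\Gamma\cap G_s$. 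Because $G_s$ is central, we may commute ${\bf c}'$ past $\tilde{\bf \delta}$ and set ${\bf g}':=\tilde{\bf h}\,{\bf c}'$ and ${\bf \gamma}:=\tilde{\bf \delta}\,{\bf c}''$, yielding the required decomposition ${\bf g}={\bf g}'{\bf \gamma}$.

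The main obstacle is ensuring that the compact set $K$ produced at the end depends only on $G$ and $\Gamma$, and not on the particular sequence ${\bf g}$. Two uniform estimates are needed: (i) that the correction term ${\bf c}$ is a polynomial sequence whose Taylor coefficients lie in a set determined solely by $\tilde K$ and the Mal'cev data for $G$; and (ii) that multiplying two polynomial sequences with compact-set coefficients (such as $\tilde{\bf h}$ and ${\bf c}'$) produces another polynomial sequence whose coefficients still lie in a compact subset of $G$. Both reduce to unwinding Baker--Campbell--Hausdorff-type identities in the nilpotent group, where the polynomial bounds on $|\psi(\cdot)|$ furnished by Lemma \ref{lem-estimate-nil}(2)--(3) keep all Mal'cev coordinates uniformly bounded. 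With these estimates in place the induction closes.
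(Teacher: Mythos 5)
The paper does not actually prove this lemma: it is quoted verbatim from Green--Tao--Ziegler \cite[Lemma C.1]{GTZ} and Host--Kra \cite[p.~243, Prop.~12]{HK}, so there is no in-paper argument to compare against. Your inductive scheme (quotient by the central subgroup $G_s$, lift, correct by a $G_s$-valued polynomial, factor that in the abelian group, and commute the central pieces back into place) is essentially the standard proof of this factorisation, and the skeleton is sound: $\pi(\Gamma)$ is cocompact discrete in $G/G_s$ because $\Gamma\cap G_s$ is cocompact in $G_s$, the coefficientwise lift along the Mal'cev section lands in $G_i$ (resp.\ in $\Gamma\cap G_i$, since $\pi^{-1}(G_i/G_s)=G_iG_s=G_i$), and centrality of $G_s$ legitimises the final rearrangement ${\bf g}=\tilde{\bf h}{\bf c}'\cdot\tilde{\boldsymbol\delta}{\bf c}''$.

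Two points need repair or at least explicit mention. First, you cannot literally ``apply the base case'' to ${\bf c}$: your base case treats $1$-step groups with their own lower central series, where polynomials have the form $g_0g_1^n$, whereas the correction ${\bf c}$ is a $G_s$-valued polynomial of degree up to $s$, with Taylor coefficients $c_j\in G_j\cap G_s$ for $0\le j\le s$. What you need is the (equally trivial) statement that in an abelian group $A$ with cocompact lattice $\Lambda$, any finite product $\prod_j a_j^{\binom{n}{j}}$ factors as $\bigl(\prod_j (a_j')^{\binom{n}{j}}\bigr)\bigl(\prod_j \lambda_j^{\binom{n}{j}}\bigr)$ with $a_j'$ in a fixed bounded fundamental domain; since $G_s\subseteq G_j$ for $j\le s$, the two factors remain adapted to the lower central series of $G$. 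Second, to know that ${\bf c}=\tilde{\boldsymbol\delta}^{-1}\tilde{\bf h}^{-1}{\bf g}$ is again a polynomial sequence adapted to the filtration you are invoking the Lazard--Leibman theorem that $Poly(G)$ is a group under pointwise multiplication; this should be cited rather than waved at as ``an expansion in the Taylor basis.'' On the other hand, your stated ``main obstacle'' is less of an obstacle than you fear: because ${\bf c}'$ and ${\bf c}''$ are central, the $j$-th Taylor coefficient of $\tilde{\bf h}{\bf c}'$ is exactly $\tilde h_jc_j'$ and that of $\tilde{\boldsymbol\delta}{\bf c}''$ is exactly $\tilde\delta_j\gamma_j''$, so compactness (resp.\ membership in $\Gamma\cap G_j$) of the final coefficients is immediate and no Baker--Campbell--Hausdorff bookkeeping or appeal to Lemma \ref{lem-estimate-nil} is required at this step. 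With these adjustments the induction closes and gives a self-contained proof of the cited lemma.
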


Let $X$ be a separable metric space with metric $d$ and $Y$ be a nonempty subset of $X^\mathbb{Z}$.
For any $\epsilon>0$, we let $s_n(Y,\epsilon)$ be the minimal number such that there exist
$x_i\in Y,1\le i\le s_n(Y,\epsilon)$ satisfying that for any $y\in Y$, there exists $1\le i\le s_n(Y,\epsilon)$
with $d(x_i(k),y(k))<\epsilon$ for all $0\le k\le n-1$. Roughly speaking, $s_n(Y,\epsilon)$ is the minimal number
of points which are $\epsilon$-dense in $Y[0,n-1]=\{(y_0,\ldots,y_{n-1}): y=(y_i)_{i\in \mathbb{Z}}\in Y\}$.

Let $G$ be 	a  connected, simply connected $s$-step nilpotent Lie group and  $G/\Gamma$ be an $s$-step
nilmanifold.  For $K\subset G$,  let $Poly(K)$ be the collection of all polynomial sequences adapted to the
lower central series filtration with coefficients in $K$. The map $\pi: Poly(G) \to \{G/\Gamma\}^\mathbb{Z}$ is defined by
$$\pi({\bf g})(n)={\bf g}(n)\Gamma\text{ for all }n\in\mathbb{Z}.$$
Put $Poly(G/\Gamma)=\pi(Poly(G))$. We have the following.
\begin{prop}\label{lem-33} Let $G/\Gamma$ be an $s$-step nilmanifold.  Then there exists
$k\in \mathbb{N}$ such that for each $\epsilon>0$, we find $C(\epsilon)>0$ satisfying
$s_n(Poly(G/\Gamma),\epsilon)\le C(\epsilon)n^k$ for all $n\in \mathbb{N}$.
\end{prop}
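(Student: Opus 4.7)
My plan is to reduce to a compact finite-dimensional parameter space of coefficients, show that the map from coefficients to sequence values is Lipschitz with constant polynomial in $n$, and then count the size of an appropriate $\delta$-net in the parameter space. First, by Lemma \ref{lem-2-5}, every ${\bf g}\in Poly(G)$ factors as ${\bf g}={\bf g}'{\bf \gamma}$ with ${\bf g}'\in Poly(K)$ for some compact $K\subset G$ depending only on $G/\Gamma$ and ${\bf \gamma}\in Poly(G)$ having all coefficients in $\Gamma$. Since $\Gamma$ is a subgroup of $G$, every value ${\bf \gamma}(n)=\gamma_0^{\binom{n}{0}}\cdots\gamma_s^{\binom{n}{s}}$ lies in $\Gamma$, hence $\pi({\bf g})=\pi({\bf g}')$, and it suffices to produce an $\epsilon$-dense subset of $\pi(Poly(K))[0,n-1]$ of size polynomial in $n$.

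Each ${\bf g}\in Poly(K)$ is determined by the tuple $(g_0,\ldots,g_s)\in \prod_{i=0}^s(K\cap G_i)$, which is compact. The key claim is that there exist constants $C_1>0$ and $D\in\mathbb{N}$, depending only on $K$ and the fixed Mal'cev basis, such that $d(g_i,g_i')\le\delta$ for all $i$ implies $d({\bf g}(n),{\bf g}'(n))\le C_1n^D\delta$ for every $n\in\mathbb{N}$. I would prove this in two telescoping steps. Fix $h,h'\in K\cap G_i$; for $N\in\mathbb{N}$, the telescoping over the path $w_k=h^k(h')^{N-k}$ gives $d(h^N,(h')^N)\le\sum_{k=0}^{N-1}d(h^{k+1}(h')^{N-k-1},h^k(h')^{N-k})$, and after stripping the common right factor $(h')^{N-k-1}$ using right-invariance of $d$ on $G$ and applying Lemma \ref{lem-estimate-nil}(1), each summand is bounded by $P_1(|\psi(h^k)|,|\psi(h)|,|\psi(h')|)\cdot d(h,h')$. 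Compactness of $K$ bounds $|\psi(h)|$ and $|\psi(h')|$, while Lemma \ref{lem-estimate-nil}(2) makes $|\psi(h^k)|$ polynomial in $k$; taking $N\le\binom{n}{i}$ therefore yields $d(h^N,(h')^N)\le P(n)d(h,h')$ for some polynomial $P$ depending only on $K$ and $G/\Gamma$. Secondly, to compare ${\bf g}(n)$ and ${\bf g}'(n)$, introduce the hybrid $w_i=(g_0')^{\binom{n}{0}}\cdots(g_{i-1}')^{\binom{n}{i-1}}g_i^{\binom{n}{i}}\cdots g_s^{\binom{n}{s}}$, so that $w_0={\bf g}(n)$ and $w_{s+1}={\bf g}'(n)$. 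Right-invariance removes the trailing product $g_{i+1}^{\binom{n}{i+1}}\cdots g_s^{\binom{n}{s}}$, and Lemma \ref{lem-estimate-nil}(1) moves the leading product past, bounding $d(w_i,w_{i+1})$ by a polynomial factor in $n$ (via (\ref{plynomial_Q}) applied to the leading part, which is itself a polynomial sequence) times $d(g_i^{\binom{n}{i}},(g_i')^{\binom{n}{i}})$. By the first step, this is at most a polynomial in $n$ times $\delta$, and summing over $i=0,\ldots,s$ gives the claim.

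Given the Lipschitz estimate, the counting is straightforward. Since each $K\cap G_i$ is a compact subset of the Lie subgroup $G_i$ of real dimension at most $m$, its $\delta$-covering number with respect to $d$ is bounded by $C_2\delta^{-m}$ for $\delta\in(0,1]$ (for instance because $d(x,y)\le|\psi(xy^{-1})|$ controls $d$-balls in terms of Mal'cev coordinates), so the product parameter space has $\delta$-covering number at most $C_3\delta^{-m(s+1)}$. Given $\epsilon>0$ and $n\in\mathbb{N}$, choose $\delta=\epsilon/(C_1n^D)$; the polynomial sequences whose coefficient tuples form such a $\delta$-net then form an $\epsilon$-dense subset of $\pi(Poly(K))[0,n-1]$ (the metric on $G/\Gamma$ is dominated by that on $G$), yielding
\[
s_n(Poly(G/\Gamma),\epsilon)\le C_3\left(\frac{C_1n^D}{\epsilon}\right)^{m(s+1)}=C(\epsilon)n^k
\]
with $k=Dm(s+1)$ and $C(\epsilon)=C_3(C_1/\epsilon)^{m(s+1)}$. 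The most delicate point is the polynomial Lipschitz estimate for the coefficients-to-sequence map: it requires combining the two telescoping arguments above with careful bookkeeping of the polynomially growing Mal'cev coordinates supplied by Lemma \ref{lem-estimate-nil} and (\ref{plynomial_Q}).
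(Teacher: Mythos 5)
Your proposal is correct and follows essentially the same route as the paper: reduce to coefficients in the compact set $K$ from Lemma \ref{lem-2-5}, prove the polynomial Lipschitz bound $d({\bf g}(n),{\bf g}'(n))\le C_1n^D\max_i d(g_i,g_i')$ by the same two telescoping arguments (over powers and over hybrid products, using right-invariance and Lemma \ref{lem-estimate-nil}), and then count a $\delta$-net in $\prod_i(K\cap G_i)$ with $\delta\asymp\epsilon n^{-D}$. The paper isolates the Lipschitz estimate as Lemma \ref{lem-44} and phrases the covering bound via the upper box dimension of $K$, but the substance is identical.
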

To prove Proposition \ref{lem-33}, we need the following lemma.
\begin{lem}\label{lem-44}
	Let $G$ be a connected, simply connected $s$-step nilpotent Lie group and $K$ be a nonempty
compact subset of $G$. Then there is a real polynomial $P: \mathbb{R}\rightarrow \mathbb{R}$ such
that  $$d({\bf g}(n),\widetilde {\bf g}(n))\le P(n)\max\{d(g_i,\widetilde g_i):0\le i\le s\} \text{ for all }n\in \mathbb{N},$$
	for any polynomials ${\bf g}(n)=g_0^{\tbinom{n}{0}}g_1^{\tbinom{n}{1}}\cdots g_s^{\tbinom{n}{s}}$
and $\widetilde {\bf g}(n)=\widetilde g_0^{\tbinom{n}{0}}  \widetilde g_1^{\tbinom{n}{1}}\cdots \widetilde
g_s^{\tbinom{n}{s}}$ adapted to the lower central series filtration with coefficients
$g_0,g_1,\cdots,g_s,\widetilde g_0,\widetilde g_1,\cdots,\widetilde g_s\in K$.
	\end{lem}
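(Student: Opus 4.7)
I would prove the lemma in two steps: first a single-factor Lipschitz bound for the power map $g \mapsto g^N$, and then a telescoping reduction that peels off one coefficient at a time.

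\emph{Single-factor estimate.} The first step is to establish that for every compact $L \subset G$ there is a polynomial $R_L$ with $d(g^N, \widetilde g^N) \le R_L(N)\,d(g, \widetilde g)$ for every $N \in \mathbb{N}$ and $g, \widetilde g \in L$. I would work in Mal'cev coordinates via $\psi$. In these coordinates, group multiplication and inversion are polynomial, and $g \mapsto \psi(g^N)$ is a polynomial in $\psi(g)$ whose coefficients are themselves polynomial in $N$ (a standard consequence of the polynomial form of the Baker--Campbell--Hausdorff formula on a nilpotent Lie algebra). Consequently $F_N(u, v) := \psi(g^N \widetilde g^{-N})$ (with $u = \psi(g)$, $v = \psi(\widetilde g)$) is a polynomial in $u, v$ with coefficients polynomial in $N$, and vanishes on the diagonal $\{u = v\}$. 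A Taylor-type factorization restricted to $u, v \in \psi(L)$ then yields $|F_N(u, v)| \le C_L(N)|u - v|_\infty$ with $C_L(N)$ polynomial in $N$. Combined with the defining inequality $d(g^N, \widetilde g^N) \le |\psi(g^N \widetilde g^{-N})|$ and the bi-Lipschitz equivalence between $d$ and the $|\psi|$-distance on the compact set $L$, this yields the claim.

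\emph{Telescoping.} For $0 \le i \le s+1$, set
\[
h_i(n) := \widetilde g_0^{\binom{n}{0}} \cdots \widetilde g_{i-1}^{\binom{n}{i-1}} g_i^{\binom{n}{i}} \cdots g_s^{\binom{n}{s}},
\]
so $h_0(n) = {\bf g}(n)$ and $h_{s+1}(n) = \widetilde{\bf g}(n)$. The triangle inequality bounds $d({\bf g}(n), \widetilde{\bf g}(n))$ by $\sum_{i=0}^s d(h_i(n), h_{i+1}(n))$. Using right-invariance of $d$ to cancel the common right factor $g_{i+1}^{\binom{n}{i+1}} \cdots g_s^{\binom{n}{s}}$ and then applying Lemma \ref{lem-estimate-nil}(1), each summand becomes at most
\[
P_1\!\left(|\psi(A_i)|, |\psi(g_i^{\binom{n}{i}})|, |\psi(\widetilde g_i^{\binom{n}{i}})|\right) \cdot d\!\left(g_i^{\binom{n}{i}}, \widetilde g_i^{\binom{n}{i}}\right),
\]
where $A_i := \widetilde g_0^{\binom{n}{0}} \cdots \widetilde g_{i-1}^{\binom{n}{i-1}}$. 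By Lemma \ref{lem-estimate-nil}(2), (3) and the compactness of $K$, each of the three arguments of $P_1$ is polynomial in $n$, so the prefactor is polynomial in $n$. Applying the single-factor estimate to each factor $d(g_i^{\binom{n}{i}}, \widetilde g_i^{\binom{n}{i}})$ (with $L = K$ and $N = \binom{n}{i}$, itself polynomial in $n$) and summing over $i$ produces a polynomial $P$ as required.

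\emph{Main obstacle.} The technical heart is the single-factor estimate. A naive induction via $d(g^N, \widetilde g^N) \le P_1(\cdots)\,d(g^{N-1}, \widetilde g^{N-1}) + d(g, \widetilde g)$ combined with Lemma \ref{lem-estimate-nil}(1) produces only an exponential-in-$N$ constant through the iterated product of $P_1$-values, so one genuinely needs the polynomial structure of group operations in Mal'cev coordinates, together with the bi-Lipschitz comparison of $d$ with the coordinate norm on compact subsets of $G$, to obtain the required polynomial-in-$N$ Lipschitz constant.
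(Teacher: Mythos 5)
Your telescoping step is exactly the paper's: peel off one coefficient at a time, cancel the common right factor by right-invariance, and control the prefactor with Lemma \ref{lem-estimate-nil}(1)--(3) together with \eqref{plynomial_Q}. Where you diverge is the single-factor estimate $d(g^N,\widetilde g^N)\le R_L(N)\,d(g,\widetilde g)$, and here your ``main obstacle'' paragraph misjudges the difficulty: the exponential blow-up only occurs if you recurse, bounding $d(g^N,\widetilde g^N)$ through $d(g^{N-1},\widetilde g^{N-1})$. The paper instead telescopes the power itself, $d(\widetilde g^N,g^N)\le\sum_{i=0}^{N-1}d(\widetilde g^{i}g^{N-i},\widetilde g^{i+1}g^{N-i-1})=\sum_{i=0}^{N-1}d(\widetilde g^{i}g,\widetilde g^{i}\widetilde g)$, and applies Lemma \ref{lem-estimate-nil}(1) \emph{once} per summand, so each term is $P_1(|\psi(\widetilde g^{i})|,w,w)\,d(g,\widetilde g)$ and the sum $\sum_{i=0}^{N-1}P_1(\widetilde P_2(i),w,w)$ is polynomial in $N$ --- no products of constants ever appear. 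Your alternative via Mal'cev coordinates (polynomiality of $\psi(g^N)$ in $\psi(g)$ with coefficients polynomial in $N$, vanishing on the diagonal, Taylor factorization) is also viable and arguably more conceptual, but it imports machinery the paper does not supply: the BCH-based polynomiality of the power map, and crucially the reverse comparison $|\psi(g)-\psi(\widetilde g)|\le C_L\,d(g,\widetilde g)$ on compacta (the forward inequality $d(x,y)\le|\psi(xy^{-1})|$ is definitional, but the reverse is a separate fact, essentially Lemma A.4 of Green--Tao, which you should cite or prove). In short: your proof can be completed, but the elementary route you dismiss is precisely the one the paper takes, and it is shorter because it needs only the three inequalities of Lemma \ref{lem-estimate-nil}.
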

\begin{proof} Let $P_1,P_2,P_3$ be the real polynomials appearing in Lemma \ref{lem-estimate-nil}
and $Q$ be the real polynomial appearing in \eqref{plynomial_Q}. Since $K$ is compact,
$w=\max\{|\psi(g)|:g\in K\}$ is a positive real number. Put $\widetilde Q(n)=Q(n,w,w,\cdots,w)$ and
$\widetilde P_2(n)=w^{n_G}P_2(n)$, where $n_G$ is the constant appearing in Lemma \ref{lem-estimate-nil} (2).
	
Let ${\bf g}(n)=g_0^{\tbinom{n}{0}}g_1^{\tbinom{n}{1}}\cdots g_s^{\tbinom{n}{s}}$
 and $\widetilde {\bf g}(n)=\widetilde g_0^{\tbinom{n}{0}}  \widetilde g_1^{\tbinom{n}{1}}\cdots \widetilde  g_s^{\tbinom{n}{s}}$
 be two polynomials adapted to the lower central series filtration with coefficients $g_0,\cdots,g_s,\widetilde g_0,\cdots,\widetilde g_s\in K$.	
 A simple computation yields
	{\small \begin{align}\label{long}\begin{split}
		d({\bf g}(n),\widetilde {\bf g}(n))&\le \sum_{i=0}^{s-1}d\left(g_0^{\tbinom{n}{0}}
\cdots g_{i-1}^{\tbinom{n}{i-1}}\widetilde g_{i}^{\tbinom{n}{i}}\cdots
\widetilde g_s^{\tbinom{n}{s}},g_0^{\tbinom{n}{0}}\cdots g_{i}^{\tbinom{n}{i}}
\widetilde g_{i+1}^{\tbinom{n}{i+1}}\cdots \widetilde g_s^{\tbinom{n}{s}}\right)\\
		&=\sum_{i=0}^{s-1}d\left(g_0^{\tbinom{n}{0}}\cdots g_{i-1}^{\tbinom{n}{i-1}}
\widetilde g_{i}^{\tbinom{n}{i}},g_0^{\tbinom{n}{0}}\cdots g_{i}^{\tbinom{n}{i}}\right)\\
		&\le \sum_{i=0}^{s-1}P_1(|\psi(g_0^{\tbinom{n}{0}}\cdots g_{i-1}^{\tbinom{n}{i-1}})|,
|\psi(\widetilde g_{i}^{\tbinom{n}{i}})|,|\psi(g_{i}^{\tbinom{n}{i}})|)d
\left(\widetilde g_{i}^{\tbinom{n}{i}},g_{i}^{\tbinom{n}{i}}\right)\\
		&\le \sum_{i=0}^{s-1}P_1(\widetilde Q(n),\widetilde P_2(\tbinom{n}{i}),
\widetilde P_2(\tbinom{n}{i}))d\left(\widetilde g_{i}^{\tbinom{n}{i}},g_{i}^{\tbinom{n}{i}}\right)\\
		&\le \widetilde P(n)\sum_{i=0}^{s-1}d\left(\widetilde g_{i}^{\tbinom{n}{i}},g_{i}^{\tbinom{n}{i}}\right)
		\end{split}
		\end{align}}
	for all $n\in \mathbb{N}$, where $\widetilde P(n)=\sum_{i=0}^{s-1}P_1(\widetilde Q(n),\widetilde
P_2(\tbinom{n}{i}),\widetilde P_2(\tbinom{n}{i}))$ is a polynomial of $n$.

	Now we are going to show that  there is a real polynomial $P_4: \mathbb{R}\rightarrow \mathbb{R}$ such
that  $d\left(\widetilde g^{n},g^{n}\right)\le P_4(n)d\left(\widetilde g,g\right)$ for all $g,\widetilde g\in K$. In fact, it
follows from the fact
	\begin{align}\label{long-1}\begin{split}
	d\left(\widetilde g^{n},g^{n}\right)&\le \sum_{i=0}^{n-1}d(\widetilde g^ig^{n-i},\widetilde g^{i+1}g^{n-i-1})=\sum_{i=0}^{n-1}d(\widetilde g^ig,\widetilde g^{i+1})\\
	&\le\sum_{i=0}^{n-1}P_1(|\psi(\widetilde g^i)|,|\psi(\widetilde g)|,|\psi(g)|)d\left(\widetilde g,g\right)\\
	&\le\sum_{i=0}^{n-1}P_1(\widetilde P_2(i),w,w)d\left(\widetilde g,g\right)\\
	&\le P_4(n)d\left(\widetilde g,g\right)
	\end{split}
	\end{align}
	for all $n\in \mathbb{N}$, where $P_4(n)=\sum_{i=0}^{n-1}P_1(\widetilde P_2(i),w,w)$ is a real polynomial of $n$. Summing up,
we obtain
	\begin{align*}
	d({\bf g}(n),\widetilde {\bf g}(n))&\overset{\eqref{long}}\le  \widetilde P(n)\sum_{i=0}^{s-1}d\left(\widetilde g_{i}^{\tbinom{n}{i}},g_{i}^{\tbinom{n}{i}}\right)\\
	&\overset{\eqref{long-1}}\le  \widetilde P(n)\sum_{i=0}^{s-1}P_4(\tbinom{n}{i})d\left(\widetilde g_i,g_i\right)\\
	&\le P(n)\max\{d(g_i,\widetilde g_i):0\le i\le s\}
	\end{align*}
	for all $n\in \mathbb{N}$, where $P(n)=\widetilde P(n)\sum_{i=0}^{s-1}P_4(\tbinom{n}{i})$ is a real polynomial of $n$.
Then $P(n)$ is the real polynomial as required.
	This ends the proof of Lemma \ref{lem-44}.
	\end{proof}
Now we are ready to prove Proposition \ref{lem-33}.
\begin{proof}[Proof of Proposition \ref{lem-33}] By Lemma \ref{lem-2-5}, there exists a compact subset $K$ of $G$ such that
any polynomial sequence ${\bf g}$ adapted to the lower central series filtration can be factorised as
		${\bf g}={\bf g}'{\bf \gamma}$,
		where ${\bf g}'$ is a polynomial sequence adapted to the lower central series filtration with coefficients in $K$
		and ${\bf \gamma}$ is a polynomial sequence with coefficients in $\Gamma$.  Since $K$ is compact, by Lemma \ref{lem-44},
there is a real polynomial $P: \mathbb{R}\rightarrow \mathbb{R}$ such that  \begin{align}\label{distance}
		d({\bf g}(j),\widetilde {\bf g}(j))\le P(j)\max\{d(g_i,\widetilde g_i):0\le i\le s\} \text{ for all }j\in \mathbb{N},\end{align}
		and any polynomials ${\bf g}$, $\widetilde {\bf g}\in Poly(G)$ with coefficients
$g_0,\cdots,g_s,\widetilde g_0,\cdots,\widetilde g_s\in K$. It is not hard to see that there exists  $k_0\in\mathbb{N}$ and $C>1$ such that
		\begin{align}\label{C}
		 P(n)<Cn^{k_0}\text{ for all }n\in\mathbb{N}.
		\end{align}
			Since $K$ is compact, for
		$\epsilon > 0$, we let $N_\epsilon(K)$ be the smallest number of open balls of ratio $\epsilon $ needed to cover $K$. The upper
		Minkowski dimension or box dimension (see \cite{M95}) is defined by
		$$\limsup_{\epsilon\to0}\frac{-\log N_\epsilon(K)}{\log\epsilon}.$$
		This dimension of $K$ is not larger than the usual dimension of $G$ since $K$ is a subset of $G$.
Hence there exists a positive constant $L$ such that \begin{align}\label{dim}
		N_\epsilon(K)\le L(\frac{1}{\min\{ \epsilon,1\}})^{dim(G)+1}.
		\end{align}	
		
		 Set
		 $$k=k_0(s+1)(dim(G)+1)\text{ and }C(\epsilon)=\left(L(\frac{2C}{\min\{ \epsilon,1\}})^{dim(G)+1}\right)^{s+1}\text{ for }\epsilon>0.$$

		 We are going to show that $$s_n(G/\Gamma,\epsilon)\le C(\epsilon) n^k$$ for $n\in\mathbb{N}$ and $\epsilon>0$. To do this, let
$\pi$ be the projection from $Poly(K)$ to $Poly(G/\Gamma)$ defined by $\pi({\bf g})(n)={\bf g}(n)\Gamma$ for all
$n\in\mathbb{Z}$. By Lemma \ref{lem-2-5}, $\pi$ is surjective  and
		 $$d({\bf g}(j),{\bf\widetilde g}(j))\ge d(\pi({\bf g})(j),\pi({\bf \widetilde g})(j))\text{ for all }j\in\mathbb{Z}.$$
		 Hence
		 \begin{align}\label{poly1}s_n(Poly(G/\Gamma),\epsilon)\le s_n(Poly(K),\epsilon)\text{ for all }n\in\mathbb{N} \text{ and }\epsilon>0.
		 \end{align}
For $\tau>0$, we let $E_{\tau}$ be a finite subset of $K$ such that
\begin{align*}
\sharp E_\tau\le N_{\tau}(K)\text{ and }K\subset \bigcup_{g\in E_\tau}B(g,\tau).
\end{align*}
For $0\le i\le s$, we let $E_{\tau}^{(i)}$ be a subset of $K\cap G_i$ such that
\begin{align}\label{N}
\sharp E_\tau^{(i)}\le N_{\tau}(K)\text{ and }K\cap G_i\subset \bigcup_{g\in E^i_\tau}B(g,2\tau).
\end{align}
Put $P_\tau$ be the collection of all polynomial sequences ${\bf g}$ adapted to the lower central series filtration with
coefficients $g_i\in E_\tau^{(i)}$, $i=0,1,\cdots,s$.  Then  for $n\in\mathbb{N}$ and $\epsilon>0$
\begin{align}\label{poly2}\sharp P_{\frac{\epsilon}{2Cn^{k_0}}}=
\prod_{i=0}^s\sharp E^{(i)}_{\frac{\epsilon}{2Cn^{k_0}}}\overset{\eqref{N},\eqref{dim}}
\le \left(L(\frac{2Cn^{k_0}}{\min\{ \epsilon,1\}})^{dim(G)+1}\right)^{s+1}=C(\epsilon)n^k.
\end{align}
Now we fix $n\in\mathbb{N}$ and $\epsilon>0$.  By \eqref{N}, for any polynomial sequence
${\bf g}\in Ploy(K)$  with coefficients $g_0,\cdots,g_s\in K$, we have that $g_i\in K\cap G_i$.
Thus, there exists ${\bf \bar g}\in P_{\frac{\epsilon}{2Cn^{k_0}}}$ with coefficients
$\bar g_0\in E^{(0)}_{\frac{\epsilon}{2Cn^{k_0}}} ,\cdots,\bar g_s\in E^{(s)}_{\frac{\epsilon}{2Cn^{k_0}}}$ such that
$$d(g_i,\bar g_i)\overset{\eqref{N}}<\frac{\epsilon}{Cn^{k_0}}\text{ for all }0\le i\le s.$$
Therefore, $$d({\bf g}(0),\bar{\bf g}(0))=d(g_0,\bar{g}_0)<\frac{\epsilon}{Cn^{k_0}}<\epsilon$$
and for $1\le j\le n-1$, and one has
$$d({\bf g}(j),\bar{\bf g}(j))\overset{\eqref{distance}}\le P(j)
\max\{d(g_i,\bar g_i):0\le i\le s\}\overset{\eqref{C}}< Cj^{k_0}\times\frac{\epsilon}{Cn^{k_0}}\le \epsilon.$$
Hence,
$$s_n(Poly(G/\Gamma),\epsilon)\overset{\eqref{poly1}}\le s_n(Poly(K),\epsilon)
\le \sharp P_{\frac{\epsilon}{2Cn^{k_0}}}\overset{\eqref{poly2}}\le C(\epsilon)n^k.$$
Since the above inequality holds for all $n\in\mathbb{N}$ and $\epsilon>0$, we end the proof of Proposition \ref{lem-33}.
\end{proof}
	
With the above preparations, now we are in the position to prove Theorem \ref{thm-1}.

\begin{proof}[Proof of (2)$\Longrightarrow$(1) in Theorem \ref{thm-1}] Assume that Theroem \ref{thm-1} (2)
holds, that is, the logarithmic Sarnak conjecture holds for any t.d.s. with polynomial mean complexity.
In the sequel we aim to show that the logarithmic Sarnak conjecture holds.

Assume the contrary that this is not the case,  then by Tao's result \cite{Tao1} the Conjecture \ref{1} does not hold.
This means that there exist an $s\in\mathbb{N}$, an $s$-step nilmanifold $G/\Gamma$, a $Lip$-continuous function
$F:G/\Gamma\to\mathbb{C}$ and an $x_0\in G/\Gamma$ such that
		\begin{align}\label{78-0}
		\limsup_{H\to+\infty}\limsup_{N\to+\infty}\mathbb{E}_{n\le N}^{log}\sup_{g\in G}\big| \mathbb{E}_{h\le H}\mu(n+h)F(g^hx_0)\big|>0.	
		\end{align}
It is clear that   $\|F\|_{\infty}:=\max_{x\in G/\Gamma}|F(x)|>0$. Without loss of generality, we assume that
    \begin{align}\label{f-infty}
	\|F\|_\infty=1.
    \end{align}
%Without loss of generality, we can assume $\|F\|_{\infty}=1$.
Now we add an extra point $p$ to the compact metric space $G/\Gamma$. We then extend the metric $d$ on $G/\Gamma$ to
the space $G/\Gamma\cup\{p\}$ by let $d(p,x)=1$ for all $x\in G/\Gamma$. So, $(G/\Gamma\cup\{p\},d)$ is also
a compact metric space. Let $\widetilde F: \left(G/\Gamma\cup\{p\}\right)^{\mathbb{Z}}\to \mathbb{C}$ be
defined by $\widetilde F(z)=F(z(0))$ if $z(0)\in G/\Gamma$ and $0$ if $z(0)=p$.  It is clear that $\widetilde F$ is a continuous function and
\begin{align}\label{wF}\|\widetilde F\|_\infty =1
	\end{align}
by \eqref{f-infty}.
		
	In the sequel, we will find a point $y\in \left(G/\Gamma\cup\{p\}\right)^{\mathbb{Z}}$ such that
		\begin{align}\label{equation}\limsup_{N\to\infty}|\mathbb{E}_{n\le N}^{log}
		\mu (n)\widetilde F(\sigma^ny)|>0,\end{align}
		and the t.d.s.
		$(X_y,\sigma)$ has polynomial mean complexity, where  $\sigma: \left(G/\Gamma\cup\{p\}\right)^{\mathbb{Z}}\rightarrow
\left(G/\Gamma\cup\{p\}\right)^{\mathbb{Z}}$ is the left shift and $X_y=\overline{\{\sigma^ny:n\in\mathbb{Z}\}}$  is a $\sigma$-invariant
		compact subset of $\left(G/\Gamma\cup\{p\}\right)^{\mathbb{Z}}$. Clearly, this is a contradiction to
our assumption and thus proves that (2) implies (1) in Theorem \ref{thm-1}.

We divide the remaining proof into two steps.
		
		\medskip
		{\it \noindent {\bf Step 1.} The construction of the point $y$.}
		
		Firstly, we note that
		$$|z|\le \sum_{j=0}^{3} \max\{ Re(e(\frac{j}{4})z),0\}$$ for $z\in \mathbb{C}$. Thus by \eqref{78-0},
there is $\beta\in \{0,\frac{1}{4},\frac{2}{4},\frac{3}{4}\}$ such that
		\begin{align*}\limsup_{H\to+\infty}\limsup_{N\to+\infty}\mathbb{E}_{n\le N}^{log}\max\{ \sup_{g\in G}
Re\left( e(\beta)\mathbb{E}_{h\le H}\mu(n+h)F(g^hx_0)\right),0\}>0.\end{align*}
	    Thus we can find  $\tau\in(0,1)$ with
	    \begin{align*}
	    	E:=\{H\in \mathbb{N}: \limsup_{N\to+\infty}\mathbb{E}_{n\le N}^{log}\max\{ \sup_{g\in G}
Re\left( e(\beta)\mathbb{E}_{h\le H}\mu(n+h)F(g^hx_0)\right),0\}>\tau\}
	    \end{align*}
	    is an infinite set. Moreover, putting $\sigma=\frac{\tau^2}{200}$ and by induction we can find strictly
increasing sequences $\{H_i\}_{i=1}^{\infty}$ of $E$ and  $\{N_i\}_{i=1}^\infty $
of natural numbers  such that for each $i\in \mathbb{N}$ one has	
	    \begin{align}\label{A-00}
	  H_i<\sigma N_i^\sigma<\frac{\sigma}{10}H_{i+1}^\sigma,
		\end{align}
		and there exist $g_{n,i}\in G$ for $ 1\le n\le N_i$ satisfying
\begin{align}\label{getau} \mathbb{E}_{n\le N_i}^{log}\max\{Re\left(e(\beta)\mathbb{E}_{h\le H_i}\mu(n+h)F(g_{n,i}^hx_0)\right),0\}>\tau.
		\end{align}
For $i\in \mathbb{N}$, let $M_i=\sum_{n=1}^{N_i}\frac{1}{n}$
and
\begin{align}\label{A-2}
	S_i=\{n\in [1,N_i]\cap \mathbb{Z}: 	Re\left(e(\beta)\mathbb{E}_{h\le H_i}\mu(n+h)F(g_{n,i}^hx_0)\right)>\frac{\tau}{2}\}.
\end{align}
Then by \eqref{wF} and \eqref{getau} we have
		\begin{align}\label{eq-31-1}
		\sum_{n\in S_i}\frac{1}{n}>\frac{\tau}{2}M_i.
		\end{align} 	
	Notice that $\lim_{N\to+\infty}\frac{\sum_{n\le  N^\sigma}\frac{1}{n}}{\sum_{n\le N}\frac{1}{n}}=\sigma.$
So, when $i\in\mathbb{N}$ large enough we have
		\begin{align*}\sum_{n\in S_i\setminus[1, N_{i}^\sigma]}\frac{1}{n}\overset{\eqref{eq-31-1}}
>\frac{\tau}{2}M_i-\sum_{n\le  N_{i}^\sigma}\frac{1}{n}>\frac{\tau}{2}M_i-2\sigma M_{i}>\frac{\tau}{4}M_i.
		\end{align*}
		Hence we can select $S_i'\subset S_i\setminus[1,N_{i}^\sigma]$ with each gap not less than $2H_i$ and
		\begin{align}\label{A-3}
			\sum_{n\in S'_i}\frac{1}{n}> \frac{\tau M_i}{8H_i}
		\end{align}
		for $i\in\mathbb{N}$ large enough.

Define $y: \mathbb{Z}\to G/\Gamma\cup\{p\}$ such that
		$$y(n+h):=g_{n,i}^hx_0\text{\quad for }n\in S_i', h=1,2,\cdots,H_i,i\in\mathbb{N}$$
		and $y(m)=p$ for $m\in \mathbb{Z}\setminus \bigcup_{i=1}^{\infty} \bigcup_{n\in S_i'}\{n+1,n+2,\cdots, n+H_i\}$.

Clearly $y$ is well defined since $N_{i+1}>N_i+H_i$ by \eqref{A-00}. Then one has by \eqref{A-2} and \eqref{A-3} that
\begin{align*}
Re\left(e(\beta)\sum_{n\in S_i'}\frac{1}{n}\sum_{h\le H_i}\mu(n+h)\widetilde F(\sigma^{n+h}y)\right)>\frac{\tau^2}{16}M_i
\end{align*}
	for $i\in\mathbb{N}$ large enough.
This implies
\begin{align}\label{eq-17-1}
\big|\sum_{n\in S_i'}\frac{1}{n}\sum_{h\le H_i}\mu(n+h)\widetilde F(\sigma^{n+h}y)\big|>\frac{\tau^2}{16}M_i
\end{align}
	for $i\in\mathbb{N}$ large enough.
	 Moreover, for $i\in\mathbb{N}$ large enough,
		\begin{align*}
		&\left|\sum_{n\in S_i'}\frac{1}{n}\sum_{h\le H_i}\mu(n+h)\widetilde F(\sigma^{n+h}y)-\sum_{n\in S_i'}
\sum_{h\le H_i}\frac{\mu(n+h)\widetilde F(\sigma^{n+h}y)}{n+h}\right|\\
 	    \overset{\eqref{wF}}\le &  \sum_{n\in S_i'}	\sum_{h\le   H_i}(\frac{1}{n}-\frac{1}{n+h})\le \sum_{n\in S_i'}
 	\sum_{h\le H_i}\frac{H_i}{n(n+H_i)}\\
	    \le& \sum_{n\in S_i'}	\frac{H_i}{n(N_i^\sigma+H_i)}\le \sum_{n\in S_i'}	\frac{H_i}{nN_i^\sigma}\\
			\overset{\eqref{A-00}}\le& \sigma\sum_{n\in S_i'}\frac{1}{n}\le \frac{\tau^2}{32}M_i.
		\end{align*}
Combining this inequality with \eqref{eq-17-1}, one has
			\begin{align}\label{tau}
		  \left|\sum_{N_{i}^\sigma<n
			\le N_i+H_i}\frac{\mu (n )\widetilde F(\sigma^{n}y)}{n}\right|=\left|\sum_{n\in S_i'}
\sum_{h\le H_i}\frac{\mu(n+h)\widetilde F(\sigma^{n+h}y)}{n+h}\right|\ge \frac{\tau^2M_i}{32}
	\end{align}	
	 for $i\in\mathbb{N}$ large enough. Thus
		\begin{align*}
		&\left|\frac{1}{M_i}\sum_{n\le N_i}\frac{\mu (n )\widetilde F(\sigma^{n}y)}{n}\right|\\
		\ge&\frac{1}{M_i} \left|\sum_{N_{i}^\sigma<n\le N_i+H_i}\frac{\mu (n )\widetilde F(\sigma^{n}y)}{n}\right|
 -\frac{1}{M_i}\sum_{n\le N_{i}^\sigma\text{ or}\atop N_i<n\le N_i+H_i}\left|\frac{\mu (n )\widetilde F(\sigma^{n}y)}{n}\right|\\
		\overset{\eqref{tau}}\ge&\frac{\tau^2}{32}-\frac{\|\widetilde F\|_\infty}{M_i}
\sum_{ N_i<n\le N_i+H_i}\frac{1}{n}-\frac{\|\widetilde F\|_\infty}{M_i}\sum_{ n\le N_{i}^\sigma}\frac{1}{n}\\
		\overset{\eqref{wF}}\ge&\frac{\tau^2}{32}-\frac{H_i}{N_i}-2\sigma\overset{\eqref{A-00}}\ge \frac{\tau^2}{32}-3\sigma
		\overset{\eqref{A-00}}\ge \frac{\tau^2}{100}
		\end{align*}
 for $i\in\mathbb{N}$ large enough. This deduces that
		\begin{align*}\limsup_{N\to\infty}|\mathbb{E}_{n\le N}^{log}
		\mu (n)\widetilde F(\sigma^ny)|\ge \frac{\tau^2}{100}>0.
		\end{align*}
		Therefore, $y$ is the point as required.
		
		\medskip
	{\it \noindent {\bf Step 2.}  $(X_y,\sigma)$ has polynomial mean complexity.}
	
	Recall that $X_y =\overline{\{\sigma^ny:n\in\mathbb{Z}\}}$ is a compact $\sigma$-invariant
subset of $\left(G/\Gamma\cup\{p\}\right)^{\mathbb{Z}}$.
		The metric on $\left(G/\Gamma\cup\{p\}\right)^{\mathbb{Z}}$ is defined by
	\begin{align}\label{metric-1}D(x,x')=\sum_{n\in\mathbb{Z}}\frac{d(x(n),x'(n))}{2^{|n|+2}}\end{align}
	for $x=(x(n))_{n\in \mathbb{Z}}, x'=(x'(n))_{n\in \mathbb{Z}}\in \left(G/\Gamma\cup\{p\}\right)^{\mathbb{Z}}$.
		By Proposition \ref{lem-33}, we can find $k>1$ such that
	\begin{align}\label{limit}
		\lim_{n\to+\infty}\frac{s_{n}(Poly(G/\Gamma),\epsilon)}{n^{k}}=0\text{	for all } \epsilon>0.	
	\end{align}	
	Now we are going to show that $$\liminf_{n\to+\infty}\frac{S_{n}(D,\sigma,X_y,\epsilon)}{n^{k+1}}=0\text{ for all }\epsilon>0.$$

	For $n\in\mathbb{Z}_+$ and $-n\le q\le n$, let  $X_{n,q}$ be the collection of all
points $z\in \left(G/\Gamma\cup\{p\}\right)^{\mathbb{Z}}$ with
	$$z(j)=\begin{cases} p & \text{ if }-n\le j< q \\
	 {\bf g}(j)\Gamma & \text{ if } q\le j\le n
     \end{cases},	
	$$
	  where ${\bf g}$ is some polynomial sequence of $G$  adapted to the lower central
series filtration; and let $X_{n,q}^*$ be the collection of all points $z\in \left(G/\Gamma\cup\{p\}\right)^{\mathbb{Z}}$ with
	  	$$z(j)=\begin{cases} {\bf g}(j)\Gamma  & \text{ if }-n\le j< q \\
	  p & \text{ if } q\le j\le n
	  \end{cases},	
	  $$
	  where ${\bf g}$ is some polynomial sequence of $G$  adapted to the lower central series filtration.

	  For $i\in\mathbb{N}$, put $t_i=[H_i/2]$, where $[u]$ is the integer part of the real number $u$. Then
 \begin{align}\label{11}
	X_y\subset \bigcup\limits_{-t_i\le q\le t_i} X_{t_i,q}\cup\bigcup\limits_{-t_i\le q\le t_i} X_{t_i,q}^*\cup\{\sigma^jy:-H_i\le j\le H_i\}.
	\end{align}
In fact, since $H_{j+1}>N_j+H_j$ for all $j\in \mathbb{N}$, one has
\begin{align}\label{orbit-belong-set}
\sigma^ny\in \bigcup\limits_{-t_i\le q\le t_i} X_{t_i,q}\cup\bigcup\limits_{-t_i\le q\le t_i} X_{t_i,q}^*
\cup\{\sigma^jy:-H_i\le j\le H_i\} \text{ for all } n\in \mathbb{Z}
\end{align}
 by the construction of $y$.  It is not hard to see that $X_{t_i,q},X_{t_i,q}^*$ are all compact subsets of
 $\left(G/\Gamma\cup\{p\}\right)^{\mathbb{Z}}$ for each $-t_i\le q \le t_i$ and $i\in\mathbb{N}$ by
 Lemma \ref{lem-2-5}. Hence the set in right part of \eqref{11} is also a compact subset of
 $\left(G/\Gamma\cup\{p\}\right)^{\mathbb{Z}}$. Now \eqref{11} follows from the fact \eqref{orbit-belong-set}.

 Now we fix $\epsilon>0$. We have the following Claim.

 \medskip
 {\it \noindent{\bf Claim.} For $i\in\mathbb{N}$ large enough, one has	\begin{enumerate}
 		\item[(1)] $S_{[t_i/2]}(D,\sigma,X_{t_i,q},\epsilon)\le s_{t_i}(Poly(G/\Gamma),\frac{\epsilon}{2})$ for all $q\in [-t_i,t_i]\cap\mathbb{Z}$.
 		\item[(2)] $S_{[t_i/2]}(D,\sigma,X^*_{t_i,p},\epsilon)\le s_{t_i}(Poly(G/\Gamma),\frac{\epsilon}{2})$ for all $q\in [-t_i,t_i]\cap\mathbb{Z}$.
 \end{enumerate}
}
  \begin{proof}[Proof of Claim]
  	We prove Claim (1) firstly.
  For $i\in\mathbb{N}$ and $-t_i\le q\le t_i$,  we let $\pi_{i,q}: Poly(G/\Gamma)\to X_{t_i,q}$ be defined by
 $$\pi_{i,q}(z)(j)=\begin{cases} p & \text{ if }-t_i\le j<q\\
 z(j) & \text{ otherwsie}\end{cases}$$
 for $z\in  Poly(G/\Gamma)$. For $i\in\mathbb{N}$ large enough, if $z,\widetilde z\in Poly(G/\Gamma)$
 with $d(z(j),\widetilde z(j))<\frac{\epsilon}{2}$ for all $-t_i\le j\le t_i$, then for $q\in[-t_i,t_i]\cap\mathbb{Z}$
	{\small	\begin{align}\label{dist}\begin{split}
		&\bar{D}_{[t_i/2]}(\pi_{i,q}(z),\pi_{i,q}(\widetilde z))=\frac{1}{[t_i/2]}\sum_{l=0}^{[t_i/2]-1}D(\sigma^lz,\sigma^l\widetilde z)\\	
		\overset{\eqref{metric-1}}=& \frac{1}{[t_i/2]}\sum_{l=0}^{[t_i/2]-1}\sum_{n\in\mathbb{Z}}\frac{d(z(n+l),\widetilde z(n+l))}{2^{|n|+2}}\\
		\le& \frac{1}{[t_i/2]}\sum_{l=0}^{[t_i/2]-1}\left(\sum_{|n|\le [t_i/2] }\frac{d(z(n+l),
\widetilde z(n+l))}{2^{|n|+2}}+\sum_{|n|> [t_i/2] }\frac{d(z(n+l),\widetilde z(n+l))}{2^{|n|+2}}\right)\\
		\le &\frac{1}{[t_i/2]}\sum_{l=0}^{[t_i/2]-1}\left( \frac{\epsilon}{2}+\frac{diam (G/\Gamma)}{2^{t_i/2}}\right)
		<\epsilon,
		\end{split}
		\end{align}}
	where we use the fact $t_i\to+\infty$ as $i\to+\infty$ in the last inequality. Notice that the
map $\pi_{i,q}$ is surjective for all $i\in\mathbb{N}$ and $-t_i\le q\le t_i$. By \eqref{dist}, for $i\in\mathbb{N}$ large enough, one has
	$$S_{[t_i/2]}(D,\sigma,X_{t_i,q},\epsilon)\le s_{t_i}(Poly(G/\Gamma),\frac{\epsilon}{2})\text{ for all }q\in [-t_i,t_i]\cap\mathbb{Z}.$$
	By the similar arguments one has Claim (2). This ends the proof of Claim.
	\end{proof}

Hence by  the above Claim and \eqref{11}, one has
  		\begin{align*}
  S_{[t_i/2]}(D,\sigma,X_y,\epsilon)&\le (2H_i+1)+ \sum_{q=-t_i}^{t_i}\left(S_{[t_i/2]}(D,\sigma,X_{t_i,q},\epsilon)+
  S_{[t_i/2]}(D,\sigma,X_{t_i,q}^*,\epsilon)\right)\\  \nonumber
  &\le (2H_i+1)+(2t_i+1)s_{t_i}(Poly(G/\Gamma),\frac{\epsilon}{2})
  \end{align*}	
for $i\in\mathbb{N}$ large enough.
Combining this with \eqref{limit}	
		\begin{align*}
		\liminf_{n\to+\infty}\frac{S_{n}(D,\sigma,X_y,\epsilon)}{n^{k+1}}
		&\le \liminf_{i\to+\infty}\frac{S_{[t_i/2]}(D,\sigma,X_y,\epsilon)}{[t_i/2]^{k+1}}\\
		&\le \liminf_{i\to+\infty}\frac{ (2H_i+1)+(2t_i+1)s_{t_i}(Poly(G/\Gamma),\frac{\epsilon}{2})}{[t_i/2]^{k+1}}\\
		&=0,
		\end{align*}where we used the assumption $t_i=[H_i/2]$.
This implies that $(X_y,\sigma)$ has polynomial mean complexity, since the above inequality is true for all
$\epsilon>0$. This ends the proof of {\bf Step 2}.
%However, the logarithmic Sarnak conjecture does not hold for $(X_y,\sigma)$ by \eqref{equation}.
%This contradicts our assumption that the logarithmic Sarnak conjecture holds for t.d.s.
%with polynomial mean complexity.  We finish the proof of Theorem \ref{thm-1} (2)$\Longrightarrow$(1).	
	\end{proof}

\subsection{Proof of (3) implies (2) in Theorem \ref{thm-1}}\label{se-2}

To get the proof we first discuss a t.d.s. with the so-called small boundary property, then
we obtain a key proposition for the proof, and finally we give the proof. We start with
the notion of small boundary property.

	For a t.d.s. $(X,T)$, a subset $E$ of  $X$ is called {\it $T$-small} (or simply small when there is no diffusion) if
		$$\lim_{N\to+\infty}\frac{1}{N}\sum_{n=0}^{N-1}{\bf1}_{E}(T^nx)=0$$
uniformly for $x\in X$. It is not hard to show that a closed subset $E$ of $X$ is small if and only if $\nu(E)=0$ for
all $\nu\in\mathcal{M}(X,T)$.  For a subset $U$ of $X$, we say $U$ has {\it small boundary}
if $\partial U$ is small. We say $(X,T)$ has {\it small boundary property} if for any $x\in X$
and any open neighborhood $V$ of $x$, there exists an open neighborhood $W$ of $x$ such that $W\subset V$ and $W$ has small boundary.
The following lemma indicates that when $X$ has the small boundary property then the logarithmic Sarnak conjecture can be
verified through easier conditions.
	\begin{lem}\label{lem-2.8}
		Let $(X,T)$ be a t.d.s.  with   small boundary
		property. Then the logarithmic Sarnak conjecture holds for $(X,T)$ if and only if for any
subset $U$ of $X$ with small boundary, one has
		\begin{align}\label{llem-1}
		\lim_{N\to+\infty} \mathbb{E}_{n\le N}^{log}{\bf1}_{U}(T^nx)\mu(n)=0
		\end{align}
	for all $x\in X$.
\end{lem}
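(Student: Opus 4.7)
The plan is to prove the two directions separately; both rest on approximation arguments together with the standard observation that any weak-$*$ cluster point of the logarithmic empirical measures $\frac{1}{\sum_{n\le N}1/n}\sum_{n\le N}\frac{1}{n}\delta_{T^nx}$ is $T$-invariant (verify via Abel summation), and hence by definition of smallness assigns mass $0$ to every closed small set.

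For the forward direction, I would fix $U\subset X$ with small boundary, a point $x\in X$, and $\epsilon>0$. Since $\partial U$ is closed and small, Urysohn-type regularity combined with the weak-$*$ compactness of $\mathcal{M}(X,T)$ produces an open neighborhood $V\supset\partial U$ with $\sup_{\nu\in\mathcal{M}(X,T)}\nu(V)<\epsilon$. Urysohn's lemma then yields continuous $f_1\le{\bf1}_U\le f_2$ on $X$ with $0\le f_2-f_1\le{\bf1}_V$. Applying the logarithmic Sarnak conjecture to $f_1$ and $f_2$ and using $|\mu|\le 1$ gives
\[
\bigl|\mathbb{E}^{log}_{n\le N}\mu(n){\bf1}_U(T^nx)-\mathbb{E}^{log}_{n\le N}\mu(n)f_1(T^nx)\bigr|\le\mathbb{E}^{log}_{n\le N}(f_2-f_1)(T^nx),
\]
whose $\limsup$ is at most $\sup_\nu\int(f_2-f_1)\,d\nu\le\epsilon$ by the invariance of the cluster measures. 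Since $\mathbb{E}^{log}_{n\le N}\mu(n)f_1(T^nx)\to 0$ by hypothesis and $\epsilon$ is arbitrary, \eqref{llem-1} follows.

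For the reverse direction, I would fix a continuous $f:X\to\mathbb{C}$ and $\epsilon>0$, then use the small boundary property of $(X,T)$ to extract a finite cover of $X$ by open small-boundary sets $W_1,\dots,W_k$ each of small enough diameter that $\mathrm{osc}(f|_{W_j})<\epsilon$. I would disjointify, setting $U_j:=W_j\setminus(W_1\cup\cdots\cup W_{j-1})$. The inclusion $\partial U_j\subset\bigcup_{i\le j}\partial W_i$ shows each $\partial U_j$ is a closed subset of a union of small sets, hence small, so $\{U_j\}$ is a partition of $X$ into small-boundary pieces on which $f$ has oscillation $<\epsilon$. Choosing $x_j\in U_j$ and writing $g:=\sum_j f(x_j){\bf1}_{U_j}$ gives $\|f-g\|_\infty\le\epsilon$, and the decomposition
\[
\mathbb{E}^{log}_{n\le N}\mu(n)f(T^nx)=\sum_j f(x_j)\,\mathbb{E}^{log}_{n\le N}\mu(n){\bf1}_{U_j}(T^nx)+\mathbb{E}^{log}_{n\le N}\mu(n)(f-g)(T^nx)
\]
displays the right-hand side as a vanishing finite sum (by the assumption \eqref{llem-1} applied to each $U_j$) plus a remainder of modulus $\le\epsilon$; letting $\epsilon\to 0$ finishes the proof.

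The main obstacle is the partition construction: one must use the small boundary property pointwise, then extract a finite subcover by compactness, and verify that disjointification preserves smallness of boundary. The latter reduces to the set-theoretic inclusion $\partial(A\setminus B)\subset\partial A\cup\partial B$ together with the observation that a closed subset of a finite union of closed small sets is itself small (every $\nu\in\mathcal{M}(X,T)$ gives it measure zero). Everything else is routine approximation and weak-$*$ bookkeeping.
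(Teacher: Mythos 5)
Your proposal is correct and follows essentially the same route as the paper: a continuous (Urysohn) approximation of $\mathbf{1}_U$ with error supported in a neighborhood of $\partial U$ for one direction, and a step-function approximation of $f$ over a finite partition into small-boundary pieces for the other. The only real difference is cosmetic: where the paper controls the logarithmic frequency of visits to $B(\partial U,\epsilon)$ via the Shub--Weiss uniform Birkhoff bound followed by an explicit summation-by-parts conversion to logarithmic averages, you invoke the $T$-invariance of weak-$*$ cluster points of the logarithmic empirical measures together with the fact that closed small sets are null for every invariant measure; both mechanisms are valid.
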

	\begin{proof} First we assume that \eqref{llem-1} holds for any subset $U$ of $X$ with
small boundary and $x\in X$. For a given  $f\in C(X)$ and fixed $\delta>0$, let $$\epsilon=\epsilon(\delta)
=\sup_{x,y\in X,d(x,y)<\delta}|f(x)-f(y)|.$$ Let $\mathcal{P}=\{P_1,P_2,\cdots,P_k\}$ be a partition of $X$
with diameter small than $\delta$ and  each element of $\mathcal{P}$ has small boundary. For $1\le i\le k$,
we fix points $x_i\in P_i$ and define $\bar f(x)=f(x_i)$ if $x\in P_i$.  Then $\bar f(x)=\sum_{i=1}^k f(x_i){\bf1}_{P_i}(x)$ and by \eqref{llem-1},
		$$\lim_{N\to+\infty}\mathbb{E}_{n\le N}^{log}\bar f(T^nx)\mu(n)=0$$
		for all $x\in X$.
      Since $\|\bar f-f\|_{\infty}\le \epsilon$,  we have
		\begin{align*}
		&\hskip0.5cm \limsup_{N\to+\infty}|\mathbb{E}_{n\le N}^{log} f(T^nx)\mu(n)|\\
		&\le \limsup_{N\to+\infty}|\mathbb{E}_{n\le N}^{log}\bar f(T^nx)\mu(n)|+\limsup_{N\to+\infty}\mathbb{E}_{n\le N}^{log}
		\|\bar f-f\|_{\infty}\cdot|\mu(n)|\\
		&\le \epsilon
		\end{align*}
		for all $x\in X$.
		By taking $\delta\to 0$ and then $\epsilon\to0$, one has $$\lim_{N\to+\infty}\mathbb{E}_{n\le N}^{log}
 f(T^nx)\mu(n)=0$$ for all $x\in X$.	This implies the logarithmic Sarnak conjecture holds for $(X,T)$ since $f$ is arbitrary.
		
		Conversely, we assume that the logarithmic Sarnak conjecture holds for $(X,T)$.
Let $U$ be a subset  of $X$ with small boundary. Fix $\delta>0$. By a result of Shub and Weiss (see \cite[P.537]{SW}),
we can find $\epsilon>0$ such that for $N$ large enough,
		\begin{align*}
		\frac{1}{N}\sum_{n=1}^{N}{\bf1}_{B(\partial U,\epsilon)}(T^nx)\le \frac{\delta}{2},
		\end{align*}
		for all $x\in X$, where $B(\partial U,\epsilon)=\{y\in X: d(y,\partial U)<\epsilon\}$.
Moreover, for $N$ large enough,
         \begin{align}\label{llem-2}
		\begin{split}
        \mathbb{E}_{n\le N}^{log}{\bf1}_{B(\partial U,\epsilon)}(T^nx)&= \frac{1}{M_N}\sum_{n=1}^{N}\frac{{\bf1}_{B(\partial U,\epsilon)}(T^nx)}{n}\\
        &=\frac{1}{M_N}\big(\frac{S_N(x)}{N}+\sum_{j=1}^{N-1}\frac{S_j(x)}{j} \frac{1}{j+1}\big)\\
        &\le \delta
		\end{split}
        \end{align}
		for all $x\in X$, where
 we simply write $M_N=\sum_{n=1}^N\frac{1}{n}$ and $S_j(x)=\sum_{n=1}^j {\bf1}_{B(\partial U,\epsilon)}(T^nx)$ for $j\in \mathbb{N}$.

		Using Urysohn's lemma, there exists a continuous function $h:X\to\mathbb{R}$ with $0\le h\le 1$ such that
$h(x)=1$ for $x\in U\setminus B(\partial U,\epsilon)$ and $h(x)=0$ for $x\in X\setminus \big(U\cup B(\partial U,\epsilon)\big)$.
Since the logarithmic Sarnak conjecture holds for $(X,T)$, one has
		$$\lim_{N\to+\infty} \mathbb{E}_{n\le N}^{log} h(T^nx)\mu(n)=0$$
		for all $x\in X$. Combining this equality with \eqref{llem-2}, we obtain
		\begin{align*}
		&\hskip0.5cm \limsup_{N\to+\infty}|\mathbb{E}_{n\le N}^{log} 1_U(T^nx)\mu (n)|\\
		&\le  	\limsup_{N\to+\infty}|\mathbb{E}_{n\le N}^{log} h(T^nx)\mu (n)|
		+\limsup_{N\to+\infty}\mathbb{E}_{n\le N}^{log} |h(T^nx)-1_U(T^nx)|\\
		&\le \limsup_{N\to+\infty}\mathbb{E}_{n\le N}^{log} 1_{B(\partial U,\epsilon)}(T^nx)\le \delta
		\end{align*}
		for all $x\in X$. By taking $\delta\to 0$, we have 	
		$$\lim_{N\to+\infty}\mathbb{E}_{n\le N}^{log}1_U(T^nx)\mu(n)=0$$
		for all $x\in X$. This ends the proof of Lemma \ref{lem-2.8}.
	\end{proof}

The next lemma concerns the coding of a subset with small boundary.
	\begin{lem}\label{lem-2.9}
		Let $(X,T)$ be a t.d.s. and $U$ be a  subset of $X$ with small boundary. For $x\in X$,
we associate  an $\hat x\in \{0,1\}^{\mathbb{Z}}$ such that $\hat x(n)=1$ if $T^nx\in U$ and $\hat x(n)=0$ otherwise.
Then for $\delta>0$ there exist $\epsilon>0$ and  $N_\delta\in\mathbb{N}$ such that for all
$N\ge N_\delta$ and any $x_1,x_2\in X$ with $\bar d_N(x_1,x_2)< \epsilon$ one has $$\sharp\{0\le n\le N-1:\hat {x_1}(n)
\neq\hat{x_2}(n)\}\le2\delta N.$$
	\end{lem}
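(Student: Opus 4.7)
The plan is to exploit two facts: the indicator $\mathbf{1}_U$ is locally constant away from $\partial U$, and the Shub--Weiss result already used in the proof of Lemma \ref{lem-2.8} forces the orbit to spend an arbitrarily small density of time in any fixed neighborhood of $\partial U$. Combining these, two orbits that remain close on average and both avoid an $\eta$-neighborhood of $\partial U$ must agree symbolically at most indices.

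First I would establish a uniform local-constancy radius. Every $z \in X \setminus \partial U$ lies either in $\operatorname{int}(U)$ or in $X \setminus \overline{U}$, both open, so there is some $r_z > 0$ with $\mathbf{1}_U$ constant on $B(z, r_z)$. Since $X \setminus B(\partial U, \eta)$ is compact and covered by $\{B(z, r_z/2) : d(z,\partial U) \ge \eta\}$, a finite subcover together with a Lebesgue-number argument produces $r = r(\eta) > 0$ such that $\mathbf{1}_U(z) = \mathbf{1}_U(w)$ whenever $d(z, \partial U) \ge \eta$ and $d(z, w) < r$.

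Next, given $\delta > 0$, apply the Shub--Weiss bound to choose $\eta > 0$ and $N_\delta \in \mathbb{N}$ so that
$$\frac{1}{N}\sharp\{0 \le n \le N-1 : T^n x \in B(\partial U, \eta)\} \le \frac{\delta}{2}$$
for every $x \in X$ and every $N \ge N_\delta$. Set $\epsilon = r(\eta)\,\delta$. Assuming $\bar d_N(x_1, x_2) < \epsilon$, split $\{0, \ldots, N-1\}$ into three pieces: $A_i = \{n : T^n x_i \in B(\partial U, \eta)\}$ for $i = 1, 2$, with $|A_i| \le \delta N/2$ by the Shub--Weiss bound, and $A_3 = \{n : d(T^n x_1, T^n x_2) \ge r\}$. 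For the third piece, Markov's inequality applied to $\sum_{n=0}^{N-1} d(T^n x_1, T^n x_2) = N \bar d_N(x_1, x_2) < N\epsilon$ yields $|A_3| \le N\epsilon/r = \delta N$.

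Finally, for each $n \notin A_1 \cup A_2 \cup A_3$ the iterates $T^n x_1$ and $T^n x_2$ both sit outside $B(\partial U, \eta)$ and within distance $r$ of each other, so the first step forces $\hat x_1(n) = \mathbf{1}_U(T^n x_1) = \mathbf{1}_U(T^n x_2) = \hat x_2(n)$. Hence the number of indices where the two symbolic sequences differ is at most $|A_1| + |A_2| + |A_3| \le 2\delta N$, as required. The only mildly delicate point is extracting the uniform radius $r(\eta)$ in the first step via compactness; everything else is bookkeeping with Shub--Weiss and Markov.
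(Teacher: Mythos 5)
Your proof is correct and follows essentially the same route as the paper's: both use the Shub--Weiss result to bound the density of visits to a neighborhood $B(\partial U,\eta)$, a compactness argument to get a uniform radius within which $\mathbf{1}_U$ cannot change away from that neighborhood (the paper phrases this as positive distance between $\overline{U\setminus B(\partial U,\epsilon_0)}$ and $\overline{X\setminus U}$, you phrase it as a Lebesgue-number/local-constancy radius), and a Markov inequality on $\bar d_N$ to control the exceptional indices. The only differences are cosmetic bookkeeping (the paper couples $\epsilon$ to the closeness threshold via $\sqrt{\epsilon}$, you via $\epsilon/r$), and your bound $|A_1|+|A_2|+|A_3|\le 2\delta N$ matches the required conclusion.
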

	\begin{proof}
	 We 	fix an $\delta\in(0,+\infty)$ and a nonempty subset  $U$  of $X$ with small boundary. By a result of Shub and Weiss
(see \cite[P.537]{SW}) there exist $N_\delta\in \mathbb{N}$ and $\epsilon_0\in (0,+\infty)$ such that
		\begin{align}\label{eq-20-1}\sup_{x\in X,N\ge N_\delta}\frac{1}{N}\sum_{n=0}^{N-1}{\bf 1}_{B(\partial U,\epsilon_0)}(T^nx)<\delta.
		\end{align}
We notice that  $\overline{U\setminus B(\partial U,\epsilon_0)}\cap\overline{X\setminus U}=\emptyset$ and $\overline{(X\setminus U)\setminus B(\partial U,\epsilon_0)}\cap\overline{U}=\emptyset$. Thus we can find $\epsilon\in(0,\delta^2)$ such that when $x,y\in X$ with $d(x,y)<\sqrt{\epsilon}$, if $x\in U\setminus B(\partial U,\epsilon_0)$ (resp. $x\in (X\setminus U)\setminus B(\partial U,\epsilon_0)$), then $y\in U$ (resp. $y\in X\setminus U$).
		 We are to show that $\epsilon$ is the constant as required.  We fix $N\ge N_\delta$ and $x_1,x_2\in X$ with $\bar d_N(x_1,x_2)< \epsilon$.
		Set $$C=\{0\le n\le N-1:T^nx_1\in B(\partial U,\epsilon_0)\}.$$
		By \eqref{eq-20-1},  $\sharp C\le \delta N$. Put
		$$\mathcal{A}=\{0\le n\le N-1: d(T^nx_1,T^nx_2)<\sqrt{\epsilon} \}.$$
One has $\sharp \mathcal{A}\ge(1-\sqrt{\epsilon})N$ and $\hat {x_1}(n)=\hat{x_2}(n) \text{ for all }n\in \mathcal{A}\setminus C$.
Therefore,
		$$\sharp\{0\le n\le N-1:\hat {x_1}(n)=\hat{x_2}(n)\}\ge \sharp \mathcal{A}-\sharp C\ge(1-\sqrt{\epsilon}-\delta)N.$$
		Since $\delta>\sqrt{\epsilon}$, one has
		$$\sharp\{0\le n\le N-1:\hat {x_1}(n)\neq\hat{x_2}(n)\}\le2\delta N.$$
		This ends the proof of Lemma \ref{lem-2.9}.
	\end{proof}
Recall that the metric on $\{0,1\}^\mathbb{Z}$ is defined by
\begin{align}\label{metric}d(x,y)=\sum_{n\in\mathbb{Z}}\frac{|x(n)-y(n)|}{2^{|n|+2}}.\end{align}
for $x=(x(n))_{n\in \mathbb{Z}}, y=(y(n))_{n\in \mathbb{Z}}\in \{0,1\}^{\mathbb{Z}}$.
%Here we use $d$ to represent the metric since there is no diffusion.
We have the following lemma which is key for the proof of (3) implies (2) in Theorem \ref{thm-1}.

Now we show a key proposition for the proof of (3) implies (2) in Theorem \ref{thm-1}.

\begin{prop}\label{lem-2.10}
Let $(X,T)$ be a t.d.s. % with  polynomial mean  complexity
and $U$ be a  subset of $X$
with small boundary. For $x\in X$, we associate an $\hat x\in \{0,1\}^{\mathbb{Z}}$ such that $\hat x(n)=1$
if $T^nx\in U$ and $0$  if $T^nx\in X\setminus U$. Then for each $\delta>0$ we can find $\epsilon:=\epsilon(\delta)>0$ such that
		$S_N(d,\sigma,\hat{X},\delta)\le  S_N(d, T,X,\epsilon)$
		for $N\in \mathbb{N}$ large enough,
%the t.d.s. $(\hat{X},\sigma)$ has
%polynomial mean  complexity,
where $\hat{X}=\overline{\{\hat x:x\in X\}}$ and $\sigma:\{0,1\}^\mathbb{Z}\rightarrow \{0,1\}^{\mathbb{Z}}$
 is the left shift.
	\end{prop}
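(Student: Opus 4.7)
\textbf{Proof plan for Proposition \ref{lem-2.10}.} The plan is to combine Lemma \ref{lem-2.9} (which turns proximity in the mean metric $\bar d_N$ on $X$ into proximity in Hamming distance between the associated codings) with a direct estimate comparing Hamming distance on $[0,N-1]$ to the mean metric $\bar d_N$ on $\{0,1\}^\mathbb{Z}$ induced by the metric \eqref{metric}. The key output will be: if $y \in X$ and some $z \in X$ satisfy $\bar d_N(y, z) < \epsilon$, then $\bar d_N(\hat y, \hat z) < \delta$, so any $\epsilon$-cover of $X$ in $\bar d_N$ produces (via $x \mapsto \hat x$) a $\delta$-cover of $\{\hat x : x \in X\}$, hence a $\delta$-cover of its closure $\hat X$ after a tiny loss in the radius.

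First, given $\delta > 0$, pick $\delta' > 0$ small (I will choose $\delta' = \delta/4$ below) and apply Lemma \ref{lem-2.9} to obtain $\epsilon := \epsilon(\delta') > 0$ and $N_{\delta'} \in \mathbb{N}$ such that whenever $N \ge N_{\delta'}$ and $\bar d_N(x_1, x_2) < \epsilon$, the symbolic codings $\hat x_1, \hat x_2$ differ in at most $2\delta' N$ of the positions $0,1,\ldots, N-1$. Next, I would prove the following elementary fact about the shift metric: if $u, v \in \{0,1\}^{\mathbb{Z}}$ satisfy $\#\{0 \le n \le N-1 : u(n) \ne v(n)\} \le 2\delta' N$, then
\[
\bar d_N(u, v) \;\le\; \delta' + \frac{C}{N}
\]
for an absolute constant $C$. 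This is a one-line swap of summations: writing $\bar d_N(u,v) = \frac{1}{N}\sum_{m\in\mathbb{Z}} |u(m)-v(m)| \, w(m)$ with $w(m) = \sum_{k=0}^{N-1} 2^{-|m-k|-2}$, one uses $w(m) \le \tfrac{1}{2}$ on $[0,N-1]$ to bound the contribution of differing positions inside the window by $\delta' N$, and the geometric decay of $w(m)$ for $m \notin [0,N-1]$ (giving $\sum_{m \notin [0,N-1]} w(m) \le 1$) to bound the tail by $O(1)$.

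Combining these two steps, choose $\delta' = \delta/4$ and take $N$ large enough that both $N \ge N_{\delta'}$ and $C/N < \delta/4$. Then for any $x_1, x_2 \in X$ with $\bar d_N(x_1, x_2) < \epsilon$, we obtain $\bar d_N(\hat x_1, \hat x_2) < \delta/4 + \delta/4 = \delta/2$. Now, given an $\epsilon$-spanning set $\{x_1,\ldots, x_m\} \subset X$ for $X$ in the $\bar d_N$ metric with $m = S_N(d,T,X,\epsilon)$, the points $\{\hat x_1, \ldots, \hat x_m\}$ form a $(\delta/2)$-cover of $\{\hat x : x \in X\}$ in the $\bar d_N$ metric on $\{0,1\}^\mathbb{Z}$. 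Passing to closures, the open $\delta$-balls around these points cover $\hat X = \overline{\{\hat x : x \in X\}}$ (every point of $\hat X$ is a $\bar d_N$-limit of points in the $\delta/2$-covered set, hence lies within $\delta$ of some $\hat x_i$), yielding $S_N(d,\sigma,\hat X,\delta) \le m = S_N(d,T,X,\epsilon)$ for all sufficiently large $N$.

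The main (and essentially only) obstacle is getting the tail estimate in the second step cleanly — verifying that the difference between the finite Hamming count on $[0,N-1]$ and the actual $\bar d_N$ distance on $\{0,1\}^\mathbb{Z}$ is absorbed by an $O(1/N)$ error, which forces the choice $\delta' < \delta/4$ rather than the naive $\delta' < \delta/2$. All other ingredients (small boundary, Shub--Weiss uniform estimate) are already packaged inside Lemma \ref{lem-2.9}, so nothing further from the dynamics of $(X,T)$ is needed.
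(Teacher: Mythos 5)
Your proposal is correct and follows essentially the same route as the paper: both invoke Lemma \ref{lem-2.9} to get the Hamming bound $2\delta' N$ and then convert it into a bound on $\bar d_N(\hat x_1,\hat x_2)$, the only difference being that you swap the order of summation and bound the weight $w(m)=\sum_{k=0}^{N-1}2^{-|m-k|-2}$, whereas the paper splits $[0,N-1]$ into positions within a window of length $L$ of a disagreement and the rest. One tiny correction: the uniform bound on $w$ is $\sum_{k\in\mathbb{Z}}2^{-|k|-2}=3/4$ rather than $1/2$, which changes your estimate to $\tfrac{3}{2}\delta'+O(1/N)$ but still closes with $\delta'=\delta/4$.
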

	\begin{proof}% To show $(\hat{X},\sigma)$ has polynomial mean  complexity, it is sufficient to
%show that  for each $\delta>0$ we can find $\epsilon:=\epsilon(\delta)>0$ such that

	 We fix an $\delta>0$  and a nonempty subset  $U$  of $X$ with small boundary. We are to find $\epsilon\in(0,+\infty)$ such that $S_N(d,\sigma,\hat{X},\delta)\le  S_N(d, T,X,\epsilon)$ for $N$ large enough. To do this, we choose $L\in\mathbb{N}$ and $\delta'>0$ such that %$\frac{2}{2^L}<\delta$ and
		\begin{align}\label{78-1}
		4\delta'L+\frac{2}{2^L}<\delta.
		\end{align}
		By Lemma \ref{lem-2.9},  there exists $\epsilon:=\epsilon(\delta')>0$ such that for $N\in\mathbb{N}$
large enough and $x_1,x_2\in X$ with $\bar d_N(x_1,x_2)< \epsilon$ one has
		\begin{align}\label{delta}
		\sharp\{0\le n\le N-1:\hat {x_1}(n)\neq\hat{x_2}(n)\}\le2\delta' N.
		\end{align}
		Fix $x_1,x_2\in X$ with $\bar d_N(x_1,x_2)< \epsilon$ and put {\small\begin{align*}
		\mathcal{C}_N=\{0\le n\le N-1:\hat {x_1}(n+l)\neq\hat{x_2}(n+l)\text{ for some }-L+1\le l\le L-1\}.
		\end{align*}}
	By \eqref{delta}, we have  for $N\in\mathbb{N}$ large enough
	 $$\sharp\mathcal{C}_N\le 4\delta'LN.$$
Notice that $d(\sigma^n\hat{x_1},\sigma^n\hat{x_2})\le 1$ for $n\in\mathcal{C}_N$. One has
		\begin{align*}
		\bar d_N(\hat{x_1},\hat{x_2})=&\frac{1}{N}\left(\sum_{n\in \mathcal{C}_N}d(\sigma^n\hat{x_1},\sigma^n\hat{x_2})+\sum_{n\in[0,N-1]\setminus\mathcal{C}_N}d(\sigma^n\hat{x_1},\sigma^n\hat{x_2})\right) \\
			 \overset{\eqref{metric}}\le& \frac{1}{N}\left(\sum_{n\in \mathcal{C}_N}1+\sum_{n\in[0,N-1]\setminus\mathcal{C}_N}\frac{2}{2^L}\right) \\ =&\frac{1}{N}\left(\sharp\mathcal{C}_N+\frac{2}{2^L}(N-\sharp\mathcal{C}_N)\right)  \\
	\overset{\eqref{78-1}}	\le&4\delta'L+\frac{2}{2^L}<\delta.
		\end{align*}
	Therefore, $S_N(d,\sigma,\hat{X},\delta)\le  S_N(d, T,X,\epsilon)$ for $N\in\mathbb{N}$ large enough and $\epsilon$ is the constant as required.
%This implies that $(\hat{X},\sigma)$ has polynomial mean complexity, since $(X,T)$ has the
%same property.
This ends the proof of Proposition \ref{lem-2.10}.
	\end{proof}

For a t.d.s. $(X,T)$, Lindenstrauss and Weiss \cite{LW} introduced the notion of {\it mean
	dimension}, denoted by $mdim(X,T)$. % (an idea suggested by Gromov).
It is well known that for a
	t.d.s. $(X,T)$, if $h_{top}(T )<\infty$ or the topological dimension of $X$ is finite, then $mdim(X,T ) = 0$
(see Definition 2.6 and Theorem 4.2 in \cite{LW}).

Now we are ready to finish the proof of Theorem \ref{thm-1},
	
	\begin{proof}[Proof of Theorem \ref{thm-1}: (3)$\Longrightarrow$(2).]
		Assume that Theorem \ref{thm-1} (3) holds. Now we are going to show that Theorem \ref{thm-1} (2) holds.
Assume the contrary that Theorem \ref{thm-1} (2) doesn't hold, then there exists a t.d.s. $(X,T)$  with polynomial mean complexity
such that the logarithmic Sarnak conjecture does not hold for $(X,T)$.
		
Let $(Y,S)$ be 	an irrational rotation on the circle. Then $(X\times Y,T\times S)$ has polynomial
mean  complexity as well as zero mean dimension
and  admits a non periodic minimal factor
		$(Y,S)$. Hence $(X \times Y,T\times S)$ has small boundary
		property by \cite[Theorem 6.2]{L}. Since the logarithmic Sarnak conjecture
does not hold for $(X,T)$, neither does $(X \times Y,T\times S)$. By Lemma \ref{lem-2.8}, there is a subset
$U$ of $X\times Y$ with small boundary and $w\in X\times Y$ such that
		$$\limsup_{N\to+\infty}\mathbb{E}_{n\le N}^{log} {\bf1}_{U}\big((T\times S)^nw\big)\mu(n)>0.$$
		Combining this with Proposition \ref{lem-2.10}, the $\{0,1\}$-symbolic
system $(\overline{\{\hat z:z\in X\times Y\}},\sigma)$ has polynomial mean complexity and
		$$\limsup_{N\to+\infty}\mathbb{E}_{n\le N}^{log} F_0(\sigma^n\hat w)\mu(n)>0,$$
		where $F_0(\hat z)=\hat z(0)$ for $z\in X\times Y$, which contradicts the assumption
that  Theorem \ref{thm-1} (3) holds. This ends the proof of (3)$\Longrightarrow$(2) in Theorem \ref{thm-1},
and hence the proof of Theorem \ref{thm-1}.
	\end{proof}
	
	%%%%%%%%%%%%%%%%%%%%%%%%%%%%%%%%%%%%%%%%%%%%%%%%%%%%%%%%%%%%%%%%%%%%%%%%%%%%%%%%%%%%%%%%%%%%%%%%%%%%%%%%%%%%%%%%%%%%%%%%%%%%%%%%%%%%%%%%%%%%%5%

	\section{Proof of Theorem \ref{thm-2}}
	In this section we will prove Theorem \ref{thm-2}.  Firstly, we  recall the definition of
packing dimension. Let $X$ be a metric space endowed with a metric $d$ and $E$ be a subset of $X$.
We say that a collection of balls $\{U_n\}_{n\in\mathbb{N}}\subset X$ is a {\it $\delta$-packing} of $E$ if the diameter of the
balls is not larger than  $\delta$, they are pairwise
	disjoint and their centres belong to $E$. For $\alpha\in \mathbb{R}$, the {\it $\alpha$-dimensional pre-packing
	measure of $E$} is given by
	$$P(E,\alpha)=\lim_{\delta\to0}\sup\{\sum_{n\in\mathbb{N}}
	diam(U_n)^\alpha\},$$
	where the supremum is taken over all $\delta$-packings of $E$. The {\it $\alpha$-dimensional packing
	measure of $E$} is defined by
	$$p(E, \alpha) = \inf\{\sum_{i\in\mathbb{N}}
	P(E_i
	, \alpha)\},$$
	where the infimum is taken over all covers $\{E_i\}_{i\in \mathbb{N}}$ of $E$. Finally, we define the {\it packing
	dimension of $E$} by
	$$Dim_P E=\sup\{\alpha:p(E, \alpha) = +\infty\} = \inf\{\alpha:p(E, \alpha) = 0\}.$$

For $x\in [0,1]$ and $r>0$, let $B(x,r)=\{y\in[0,1],|x-y|<r\}$.	To prove Theorem \ref{thm-2}, we need
several lemmas. We begin with the following lemma (see \cite{FLR}).
	\begin{lem}\label{Packing}
		Let $\mu$ be a Borel probability measure on $[0,1]$. Then $$Dim^*\mu=\inf\{Dim_PE: E\subset [0,1]\text{ with } \mu(E^c)=0\},$$
		where $Dim^*\mu=ess\sup \limsup_{r\to 0}\frac{\log\mu(B(x,r))}{\log r}$.
	\end{lem}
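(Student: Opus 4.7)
The plan is to prove the two inequalities separately, relying on the classical local-dimension characterization of packing dimension. Write $\overline{d}_\mu(x)=\limsup_{r\to 0}\tfrac{\log\mu(B(x,r))}{\log r}$ for the upper local dimension, so that $Dim^*\mu=\mu\text{-}\operatorname{ess\,sup}\overline{d}_\mu$. The key tool I would invoke (a standard density/covering lemma proved via Vitali-type arguments and $\sigma$-stability of packing dimension) is:
\begin{enumerate}
\item[(a)] If $\overline{d}_\mu(x)\le \alpha$ for every $x\in E$, then $Dim_P E\le \alpha$.
\item[(b)] If $\mu(E)>0$ and $\overline{d}_\mu(x)\ge \alpha$ for every $x\in E$, then $Dim_P E\ge \alpha$.
\end{enumerate}

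For the easy direction $\inf\{Dim_P E:\mu(E^c)=0\}\le Dim^*\mu$, set $s=Dim^*\mu$ and for each $n\in\mathbb{N}$ let $E_n=\{x\in[0,1]:\overline{d}_\mu(x)\le s+1/n\}$. By the definition of the essential supremum, $\mu(E_n^c)=0$ for every $n$, hence the Borel set
\[
E_0=\bigcap_{n\ge 1}E_n=\{x\in[0,1]:\overline{d}_\mu(x)\le s\}
\]
satisfies $\mu(E_0^c)=0$. Applying (a) to $E_0$ yields $Dim_P E_0\le s$, which gives the claimed bound on the infimum.

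For the reverse direction $Dim^*\mu\le \inf\{Dim_P E:\mu(E^c)=0\}$, fix any Borel $E\subset[0,1]$ with $\mu(E^c)=0$, and any $t<s$. By the definition of $Dim^*\mu$, the set $F_t=\{x:\overline{d}_\mu(x)>t\}$ has $\mu(F_t)>0$. Since $\mu(E)=1$, we get $\mu(E\cap F_t)=\mu(F_t)>0$, and by construction $\overline{d}_\mu(x)>t$ for every $x\in E\cap F_t$. Applying (b) to $E\cap F_t$ gives $Dim_P(E\cap F_t)\ge t$, and monotonicity of packing dimension then yields $Dim_P E\ge t$. Letting $t\uparrow s$ finishes the proof.

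Combining the two inequalities gives the equality. The main conceptual obstacle is the lemma (a)--(b) bridging upper local dimension and packing dimension of sets; once those are in hand (as in Falconer or Fan--Lau--Rao \cite{FLR}), the rest is a short measure-theoretic argument using the definition of essential supremum and monotonicity of $Dim_P$. Everything else reduces to verifying that the level sets $\{\overline{d}_\mu\le s+1/n\}$ are Borel, which follows from the fact that $r\mapsto \mu(B(x,r))$ is measurable in $x$ for each $r$ and that $\limsup$ of a countable family of measurable functions is measurable.
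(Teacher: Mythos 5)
Your proposal is correct. Note that the paper offers no proof of this lemma at all --- it is quoted directly from Fan--Lau--Rao \cite{FLR} --- so there is no in-paper argument to diverge from; what you wrote is the standard derivation used in that reference. Granting the two local-density estimates (a) and (b) (which are indeed classical: (a) follows from the $\sigma$-stability of $Dim_P$ together with a disjoint-balls counting bound on upper box dimension, and (b) from a Vitali-type packing argument), the remaining bookkeeping is handled correctly: $\{x:\overline{d}_\mu(x)\le s\}=\bigcap_n\{x:\overline{d}_\mu(x)\le s+1/n\}$ has full measure by definition of the essential supremum, and $\mu(E\cap F_t)=\mu(F_t)>0$ lets you push the lower bound onto an arbitrary full-measure set. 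Two small points worth making explicit: for Borel measurability of $\overline{d}_\mu$ one should replace the $\limsup$ over all $r\to 0$ by the $\limsup$ along $r=1/n$, which is legitimate since $r\mapsto\mu(B(x,r))$ is monotone and $\log(1/n)/\log(1/(n+1))\to 1$; and since the infimum in the statement ranges over arbitrary sets $E$ with $\mu(E^c)=0$, one should note that any such $E$ contains a Borel set of full measure, so restricting to Borel $E$ in the second inequality loses nothing.
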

We also need the following lemma \cite[Theorem 2.1]{M95}.	
	\begin{lem}
		\label{lem-2.1}
		
		Let ${\mathcal
			B}=\{B(x_i,r_i)\}_{i\in \mathcal I}$ be a family of  open
		balls in $[0,1]$. Then there exists a finite or countable subfamily
		${\mathcal B'}=\{B(x_i,r_i)\}_{i\in {\mathcal I}'}$ of pairwise
		disjoint balls in ${\mathcal B}$ such that
		$$\bigcup_{B\in {\mathcal B}} B\subseteq \bigcup_{i\in {\mathcal I}'}B(x_i,5r_i).$$
	\end{lem}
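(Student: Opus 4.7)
The lemma is the classical $5r$-covering (Vitali-type) lemma, and my plan is to follow the standard dyadic-grouping proof. First, since all balls lie in $[0,1]$, I may assume every radius satisfies $r_i\le 1$ (otherwise replace such a ball by $[0,1]$ itself, which is contained in $B(x_i,5r_i)$). For each integer $n\ge 0$, I would group the balls by size, setting
$$\mathcal{B}_n=\{B(x_i,r_i)\in\mathcal{B}:2^{-n-1}<r_i\le 2^{-n}\},$$
so that $\mathcal{B}=\bigcup_{n\ge 0}\mathcal{B}_n$ and each $\mathcal{B}_n$ consists of balls whose radii differ by at most a factor of $2$.

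Next, I would inductively select pairwise disjoint subfamilies $\mathcal{G}_n\subseteq\mathcal{B}_n$ via Zorn's lemma: let $\mathcal{G}_0$ be a maximal pairwise disjoint subfamily of $\mathcal{B}_0$, and, assuming $\mathcal{G}_0,\ldots,\mathcal{G}_{n-1}$ have been chosen, let $\mathcal{G}_n$ be a maximal pairwise disjoint subfamily of $\{B\in\mathcal{B}_n:B\cap B'=\emptyset\text{ for all }B'\in\mathcal{G}_0\cup\cdots\cup\mathcal{G}_{n-1}\}$. Define $\mathcal{B}'=\bigcup_{n\ge 0}\mathcal{G}_n$. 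By construction $\mathcal{B}'$ is pairwise disjoint, and it is at most countable because any family of pairwise disjoint open subsets of $[0,1]$ is countable (each contains a rational point).

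The main verification is the covering property. Take any $B(x_i,r_i)\in\mathcal{B}$; then $B(x_i,r_i)\in\mathcal{B}_n$ for some $n$. By maximality of $\mathcal{G}_n$, either $B(x_i,r_i)\in\mathcal{G}_n$ (in which case we are trivially done) or $B(x_i,r_i)$ must intersect some selected ball $B(x_j,r_j)\in\mathcal{G}_0\cup\cdots\cup\mathcal{G}_n$. In the latter case the selected ball has index in some $\mathcal{G}_m$ with $m\le n$, hence $r_j>2^{-m-1}\ge 2^{-n-1}\ge r_i/2$. Choosing $z\in B(x_i,r_i)\cap B(x_j,r_j)$ and applying the triangle inequality, any $y\in B(x_i,r_i)$ satisfies
$$|y-x_j|\le |y-x_i|+|x_i-z|+|z-x_j|<2r_i+r_j\le 2(2r_j)+r_j=5r_j,$$
so $y\in B(x_j,5r_j)$. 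This shows $\bigcup_{B\in\mathcal{B}}B\subseteq\bigcup_{i\in\mathcal{I}'}B(x_i,5r_i)$.

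There is no real obstacle here; the only point requiring attention is the bookkeeping of the factor $5$, which is precisely where the dyadic grouping is used — it guarantees that when the maximality argument forces an intersection with an already chosen ball, the chosen ball has radius at least half that of the current one, converting the crude estimate $2r_i+r_j$ into $5r_j$.
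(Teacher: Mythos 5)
Your proof is correct: it is the standard dyadic-scale selection argument for the $5r$-covering lemma, and all the estimates check out (in particular $r_j>2^{-n-1}\ge r_i/2$ and hence $2r_i+r_j<5r_j$). The paper itself offers no proof of this statement --- it is quoted directly from Mattila's book as \cite[Theorem 2.1]{M95} --- so there is nothing to compare against; your argument is the expected one, and the only cosmetic point is that the reduction to $r_i\le 1$ is more cleanly phrased as: if some $r_i>1/5$ then $B(x_i,5r_i)\supseteq[0,1]$ and that single ball already suffices.
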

	
	Let $\mathbb{T}$ be the unit circle on the complex plane $\mathbb{C}$. Recall that $e(t)=e^{2\pi t}$ for any $t\in \mathbb{R}$.
We will prove the following lemma  by using Lemma \ref{Packing} and Lemma \ref{lem-2.1}. Define a metric
$d$ on $[0,1]\times \mathbb{T}$ such that $d((x_1,z_1),(x_2,z_2))=\max\{ |x_1-x_2|,|z_1-z_2|\}$
		for $(x_1,z_1),(x_2,z_2)\in [0,1]\times \mathbb{T}$.
	\begin{lem}\label{LEMMA}Let $C$ be a compact subset of $[0,1]$ with $Dim_P C<\tau$ for some given $\tau>0$. Then the t.d.s.
$T: C\times \mathbb{T}\to  C\times \mathbb{T}$ defined by $T\big(x,e(y)\big)= \big(x,e(y+x)\big)$ satisfies
for any $\rho\in \mathcal{M}(C\times \mathbb{T},T)$ and any $\epsilon>0$,
$$\liminf_{n\rightarrow +\infty} \frac{S_n(d,T,\rho,\epsilon)}{n^\tau}=0.$$
	\end{lem}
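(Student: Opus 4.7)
The plan is to translate sub-$\tau$ packing dimension of $C$ into a quantitative lower bound on small-ball $\mu$-measure, and then use a Vitali-type covering to get sub-$\tau$ mean measure complexity. Let $\pi:C\times\mathbb{T}\to C$ be the projection and set $\mu=\pi_*\rho$. Since $\mu$ is concentrated on $C$, Lemma \ref{Packing} yields $Dim^*\mu\le Dim_P C<\tau$. Fix any $\tau'\in(Dim^*\mu,\tau)$. Then $\mu$-almost every $x$ satisfies $\limsup_{r\to 0}\log\mu(B(x,r))/\log r<\tau'$, hence $\mu(B(x,r))>r^{\tau'}$ for all sufficiently small $r$ depending on $x$. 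An Egorov-type argument upgrades this to a uniform bound: given $\eta>0$, there exist $r_0>0$ and a Borel set $C'\subseteq C$ with $\mu(C')>1-\eta$ such that $\mu(B(x,r))>r^{\tau'}$ holds for every $x\in C'$ and every $0<r\le r_0$.

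The next step is to relate $\bar d_n$ to the product coordinates. Because $T$ fixes the first coordinate and rotates the fibre by it, $T^i(x,e(y))=(x,e(y+ix))$, and a short computation using $|e(\alpha)-e(\beta)|\le 2\pi\|\alpha-\beta\|_{\mathbb{T}}$ yields
\[
\bar d_n\bigl((x_1,e(y_1)),(x_2,e(y_2))\bigr)\le C_0(s+nr)
\]
whenever $|x_1-x_2|\le r$ and $|e(y_1)-e(y_2)|\le s$, for an absolute constant $C_0>0$. Consequently, choosing $r=c\epsilon/n$ and $s=c\epsilon$ for a sufficiently small $c=c(C_0)>0$, every product box of these radii is contained in a single $\bar d_n$-ball of radius $\epsilon$.

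With this scale fixed, I would apply Lemma \ref{lem-2.1} to $\{B(x,r):x\in C'\}$ and extract a disjoint subcollection $\{B(x_i,r)\}_{i\in I}$ whose five-fold enlargements cover $C'$. For $n$ large enough that $5r<r_0$, the uniform measure bound gives
\[
|I|\,r^{\tau'}\le\sum_{i\in I}\mu(B(x_i,r))\le 1,
\]
so $|I|\le r^{-\tau'}=(n/(c\epsilon))^{\tau'}$. Combining the $x_i$ with a $\lceil 1/s\rceil$-point $s$-net in $\mathbb{T}$ yields at most $K(\epsilon)n^{\tau'}$ centres whose $\bar d_n$-balls of radius $\epsilon$ cover $C'\times\mathbb{T}$, a set of $\rho$-measure at least $\mu(C')>1-\eta$. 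Choosing $\eta=\epsilon$ gives $S_n(d,T,\rho,\epsilon)\le K(\epsilon)n^{\tau'}$ for all sufficiently large $n$, and $\tau'<\tau$ then forces $\liminf_{n\to+\infty}S_n(d,T,\rho,\epsilon)/n^\tau=0$.

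The only genuinely substantive step is the passage, via Egorov, from the pointwise packing bound to a uniform lower estimate on $\mu(B(x,r))$ over a large-measure set; the Vitali extraction of Lemma \ref{lem-2.1} then converts this measure bound directly into the cardinality estimate $|I|\le r^{-\tau'}$, and the scale coupling $r\asymp\epsilon/n$ dictated by the $\bar d_n$-inequality is what turns the packing-dimension exponent $\tau'<\tau$ into sub-$\tau$ complexity. The remaining bookkeeping of constants and thresholds is routine.
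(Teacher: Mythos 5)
Your argument is correct and follows essentially the same route as the paper's proof: project $\rho$ to $C$, invoke Lemma \ref{Packing} to get a uniform lower mass bound $\mu(B(x,r))>r^{\tau'}$ on a subset of measure $>1-\epsilon$, apply the $5r$-covering Lemma \ref{lem-2.1} at scale $r\asymp\epsilon/n$ to bound the number of disjoint balls by $O_\epsilon(n^{\tau'})$, and cross with a fixed $\epsilon$-net in $\mathbb{T}$ using the estimate $\bar d_n\lesssim s+nr$. The only differences are cosmetic constants (the paper uses radii $\epsilon/10n$ and $\epsilon/2n$ explicitly where you write $c\epsilon/n$ and its five-fold enlargement), so there is nothing to correct.
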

	\begin{proof} Fix a constant $\tau_0$ with $Dim_P C<\tau_0<\tau$. For a given  $\rho\in \mathcal{M}(C\times \mathbb{T},T)$
let $m$ be the projection of $\rho$ onto the first coordinate. Fix $\epsilon\in (0,1)$. To prove Lemma \ref{LEMMA}, it suffices to demonstrate
		$$\liminf_{n\rightarrow +\infty} \frac{S_n(d,T,\rho,\epsilon)}{n^\tau}=0.$$

		First we note that $m(C)=1$. Using Lemma \ref{Packing}, one has $Dim^*m<\tau_0$  and there exist a
subset $\widetilde C$ of $C$ and a  constant $r_\epsilon\in (0,1)$ such that
		\begin{itemize}
			\item[(1).] $\widetilde C$ is compact and $m(\widetilde C)>1-\epsilon$;
			\item[(2).] $m(B(x,r))> r^{\tau_0}$ for $0<r\le r_\epsilon$ and $x\in \widetilde C$.
		\end{itemize}

		For any given integer $n>\frac{\epsilon}{10r_\epsilon}$, set $\mathcal{B}_n=\{B(x,\frac{\epsilon}{10n})\}_{x\in \widetilde C}$. By Lemma \ref{lem-2.1},
there exist pairwise disjoint balls $\mathcal{B}'_n=\{B(x_i,\frac{\epsilon}{10n})\}_{i\in\mathcal{I}_n}$ in $\mathcal{B}$ such that
		$$\widetilde C\subset \bigcup_{i\in\mathcal{I}'_n}B(x_i,\frac{\epsilon}{2n}).$$
		Since $\frac{\epsilon}{10n}<r_\epsilon$, one deduces that
		$$m(B(x,\frac{\epsilon}{10n}))>\left(\frac{\epsilon}{10n}\right)^{\tau_0}\text{ for all }x\in \widetilde C.$$
		Therefore, $\mathcal{I}_n$ is finite  since elements in $\mathcal{B}_n'$ are pairwise disjoint. Precisely,
		$$\sharp\mathcal{I}_n\le \left(\frac{10n}{\epsilon}\right)^{\tau_0}.$$ Now we
		put $$E_\epsilon=\{\big(x_i,e(\frac{\epsilon j}{4\pi})\big):i\in\mathcal{I}_n\text{ and } j\in \{0,1,\cdots,[\frac{4\pi}{\epsilon}]\},$$
		where $[\frac{4\pi}{\epsilon}]$ is the integer part of $\frac{4\pi}{\epsilon}$.
		Then, for $n>\frac{\epsilon}{10r_\epsilon}$, it is not hard to verify that
		$$B_{\bar d_n}\Big(\big(x_i,e(\frac{\epsilon j}{4\pi})\big),\epsilon\Big)
		\supset B(x_i,\frac{\epsilon}{2n})\times \{e(t):|t-\frac{\epsilon j}{4\pi}|<\frac{\epsilon}{4\pi}\}$$
		for $i\in\mathcal{I}_n\text{ and } j\in \{0,1,\cdots,[\frac{4\pi}{\epsilon}]\}$. This implies that for $n>\frac{\epsilon}{10r_\epsilon}$ one has
		\begin{align*}
		\rho(\bigcup_{y\in E_\epsilon}B_{\bar d_n}(y,\epsilon))&\ge \rho(\bigcup_{i\in\mathcal{I}_n}B(x_i,\frac{\epsilon}{2n})\times\mathbb{T})=m(\bigcup_{i\in\mathcal{I}_n}B(x_i,\frac{\epsilon}{2n}))\\
		&\ge m( \widetilde C)\ge 1-\epsilon,
		\end{align*}
		and
		$$S_n(d,T,\rho,\epsilon)\le \sharp E_\epsilon\le \sharp \mathcal{I}_n
\times \frac{4\pi}{\epsilon }\le \left(\frac{10n}{\epsilon}\right)^{\tau_0}\times \frac{4\pi}{\epsilon }.$$
		By the fact $\tau_0<\tau$, one has $$\liminf_{n\rightarrow +\infty} \frac{S_n(d,T,\rho,\epsilon)}{n^\tau}=0.$$ This ends the proof of Lemma \ref{LEMMA}.
	\end{proof}

  Now let $p=(0,0)$ be the origin of $\mathbb{C}$. For a sequence $y\in \left(\mathbb{T}\cup\{ p\}\right)^\mathbb{Z}$, let
 {\small $$Gen(y)=\{\mu\in\mathcal{M}(\left(\mathbb{T}
 \cup\{p\}\right)^\mathbb{Z},\sigma):\frac{1}{N_i-M_i}\sum_{M_i<n\le N_i}\delta_{\sigma^ny}\to\mu\text{ for }N_i-M_i\to+\infty\},$$}
 where $\sigma: \left(\mathbb{T}\cup\{p\}\right)^\mathbb{Z}\rightarrow \left(\mathbb{T}\cup\{p\}\right)^\mathbb{Z}$
 is the left shift. Put $X_y=\overline{\{\sigma^ny:n\in \mathbb{Z}\}}$. Then $(X_y,\sigma)$ is a subsystem of
 $(\left(\mathbb{T}\cup\{p\}\right)^\mathbb{Z},\sigma)$.
 It is not hard to see that for $\mu\in Gen(y)$, $\mu(X_y)=1$, and thus
 we can identify $Gen(y)$ with $\mathcal{M}(X_y,\sigma)$. We have

 \begin{lem}\label{lem}Let $C$ be a non-empty compact subset of $[0,1]$ and
 $y\in \left(\mathbb{T}\cup\{p\}\right)^\mathbb{Z}$. Assume that the pair $(y,C)$ meets the
 following

\noindent Property $(*)$: there exist $\{m_1<n_1<m_2<n_2\cdots\}\subset \mathbb{Z}$,
 $\{\theta_k\}_{k\ge 1}\subset C$ and $\{\phi_k\}_{k\ge 1}\subset [0,1]$ such that
 	\begin{enumerate}
 		\item $\lim_{i\to\infty}n_i-m_i=+\infty;$
 		\item $y(j)=p$ for $j\in\mathbb{Z}\setminus\cup_{i\in\mathbb{N}}[m_{i},n_{i});$
 		\item $y(m_i+j)=e(\phi_i+j\theta_i)$ for all $i\ge 1$ and $0\le j< n_{i}-m_i$.
 	\end{enumerate}
 	Then any element in $Gen(y)$ supports on the compact subset
 $$\widetilde{C}=\{(ze(i\theta))_{i\in \mathbb{Z}} \in \mathbb{T}^{\mathbb{Z}}:
 \theta\in C, z\in\mathbb{T}\}\cup\{p\}^\mathbb{Z}.$$
 	\end{lem}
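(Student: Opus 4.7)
The plan is to show that for any $\mu\in Gen(y)$ one has $\mu(\widetilde C)=1$, by realizing $\mu$ as a weak-$\ast$ limit of empirical averages $\mu_i:=\frac{1}{N_i-M_i}\sum_{M_i<n\le N_i}\delta_{\sigma^n y}$ with $N_i-M_i\to\infty$ and showing that the orbit of $y$ under $\sigma$ spends almost all its time close to $\widetilde C$. First I would verify that $\widetilde C$ is compact (hence closed): the continuous map $\Phi\colon C\times\mathbb{T}\to\mathbb{T}^{\mathbb{Z}}$, $(\theta,z)\mapsto (ze(i\theta))_{i\in\mathbb{Z}}$, has compact image $\Phi(C\times\mathbb{T})$, and adjoining the single point $p^{\mathbb{Z}}$ preserves compactness.

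Given $\eta>0$, let $F_\eta:=\{z:d(z,\widetilde C)\le\eta\}$, a closed set. Since $\widetilde C$ is closed, $\bigcap_{\eta>0}F_\eta=\widetilde C$, so it suffices to show $\mu_i(F_\eta)\to 1$ for each fixed $\eta>0$; Portmanteau's theorem then yields $\mu(F_\eta)\ge\limsup_i\mu_i(F_\eta)=1$, and letting $\eta\downarrow 0$ gives $\mu(\widetilde C)=1$. Choose $L\in\mathbb{N}$ large enough that any two sequences agreeing on $[-L,L]$ lie within $\eta$ in the product metric; this is possible since the tail contribution is at most $\mathrm{diam}(\mathbb{T}\cup\{p\})\cdot 2^{-L-1}$. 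Call an integer $n$ \emph{good} if $[n-L,n+L]$ either lies entirely inside some block $[m_j,n_j)$ or lies entirely outside $\bigcup_j[m_j,n_j)$. Using property~(3), a good $n$ inside block $j$ satisfies $(\sigma^n y)(k)=e(\phi_j+(n-m_j)\theta_j)\cdot e(k\theta_j)$ for $|k|\le L$, which matches the restriction of $\Phi(\theta_j,e(\phi_j+(n-m_j)\theta_j))\in\widetilde C$; a good $n$ outside all blocks has $(\sigma^n y)|_{[-L,L]}=p^{2L+1}$, matching $p^{\mathbb{Z}}\in\widetilde C$. In either case $\sigma^n y\in F_\eta$.

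The core of the argument is bounding the density of \emph{bad} points, namely those within distance $L$ of some boundary $m_j$ or $n_j$. Given $\epsilon>0$, property~(1) provides $J_0$ with $n_j-m_j\ge 4L/\epsilon$ for all $j\ge J_0$. Since these large blocks are pairwise disjoint, the number of them meeting $(M_i-L,N_i+L]$ is at most $2+\epsilon(N_i-M_i+2L)/(4L)$, while the first $J_0$ blocks together contribute at most $4LJ_0$ bad points. Each block contributes at most $4L$ bad points (at most $2L$ near $m_j$ and $2L$ near $n_j$), so the total number of bad points in $(M_i,N_i]$ is at most $\epsilon(N_i-M_i+2L)+8L+4LJ_0$. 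Dividing by $N_i-M_i\to\infty$ yields bad density $\le \epsilon+o_i(1)$, and letting $\epsilon\downarrow 0$ gives $\mu_i(F_\eta)\to 1$ as required.

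The main obstacle I anticipate is the combinatorial density estimate just described: although $n_j-m_j\to\infty$, the gaps $m_{j+1}-n_j$ are a priori unconstrained, so all density control must come from the block lengths alone, and one must also ensure that the possibly short initial blocks ($j<J_0$) contribute $O(1)$ that washes out under division by $N_i-M_i\to\infty$. Both points are handled by the clean splitting into $j<J_0$ and $j\ge J_0$ indicated above; the rest of the proof is a routine application of Portmanteau together with the fact that agreement on a long window in the product metric forces proximity.
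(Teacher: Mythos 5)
Your proof is correct, but it takes a genuinely different route from the paper's. The paper isolates the clopen set $Z=\{z:\ z(-1)=p,\ z(0)\in\mathbb{T}\}$ of ``block starts'', observes that $X_y\setminus\bigcup_{n\in\mathbb{Z}}\sigma^nZ$ is contained in $\widetilde C$, and then kills $\mu(Z)$ by the Portmanteau inequality for open sets together with the fact that the density of the $m_j$'s in $(M_i,N_i]$ tends to $0$ --- the same consequence of Property $(*)$(1) that drives your estimate. The counting there is lighter (one point per block, rather than your $O(L)$ boundary-adjacent times per block), but it relies on a structural description of the limit points of the orbit of $y$; taken literally, the displayed inclusion overlooks limit points exhibiting a $\mathbb{T}\to p$ transition (e.g.\ limits of $\sigma^{n_j-1}y$), which have to be disposed of separately using $\sigma$-invariance. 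Your argument --- approximating $\widetilde C$ by the closed neighborhoods $F_\eta$, matching each good time $n$ inside block $j$ with the explicit element $\Phi\bigl(\theta_j,e(\phi_j+(n-m_j)\theta_j)\bigr)$ of $\widetilde C$, and bounding the density of bad times by splitting the blocks into the finitely many short ones and the long ones --- avoids any analysis of $X_y$ itself and treats both kinds of transitions uniformly. What you pay is the heavier bookkeeping with the window length $L$; what you gain is that the only inputs are the three clauses of Property $(*)$, applied locally, plus the closed-set form of Portmanteau.
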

 \begin{proof} Assume that $(y,C)$ meets Property $(*)$ and set
 	$$Z=\{z\in \left(\mathbb{T}\cup\{p\}\right)^\mathbb{Z}:z(-1)=p,z(0)\in\mathbb{T}\}.$$
 	It is clear that  $X_y\setminus \bigcup_{n\in\mathbb{Z}}\sigma^nZ\subset \widetilde{C}$.
 To prove the lemma, it is enough to show that $\mu(\widetilde C)=1$  for all $\mu\in Gen(y)$.
 Since $Gen(y)=\mathcal{M}(X_y,\sigma)$, it is enough to show that $\mu(Z)=0$  for all $\mu\in Gen(y)$.

 Now we fix a $\mu\in Gen(y)$.
 	Then there exist $M_1<N_1, M_2<N_2,\cdots$ such that $\lim_{i\rightarrow +\infty}
 N_i-M_i=+\infty$ and $$\lim_{i\rightarrow +\infty}\frac{1}{N_i-M_i}\sum_{M_i<n\le N_i}\delta_{\sigma^ny}=\mu.$$
 	Since $Z$ is an open subset of $\left(\mathbb{T}\cup\{p\}\right)^\mathbb{Z}$, we have
 	\begin{align*}
 		\mu(Z)&\le \liminf_{i\rightarrow +\infty}\frac{1}{N_i-M_i}\sum_{M_i<n\le N_i}\delta_{\sigma^ny}(Z)\\
 		&=\liminf_{i\to +\infty}\frac{\sharp\{M_i<n\le N_i:\sigma^ny\in Z\}}{N_i-M_i}\\
 		&=\liminf_{i\to+\infty}\frac{\sharp\{M_i<n\le N_i:y(n-1)=p,y(n)\in\mathbb{T}\}}{N_i-M_i}\\
 		&=\liminf_{i\to+\infty}\frac{\sharp\{j\in\mathbb{N}: M_i<m_j\le N_i\}}{N_i-M_i} =0,
 	\end{align*}
 	where the last equality follows from Property $(*)$ (1). This ends the proof of Lemma \ref{lem}.
 \end{proof}	

The next lemma follows easily from the previous ones.

	\begin{lem}\label{lem-1}Assume that $ C$ is a nonempty compact subset of $[0,1]$ with $Dim_P C<\tau$
and $y\in \left(\mathbb{T}\cup\{p\}\right)^\mathbb{Z}$. If $(y,C)$ meets  Property $(*)$ as in Lemma \ref{lem},
then the t.d.s. $(X_y,\sigma)$ satisfies $$\liminf_{n\rightarrow +\infty} \frac{S_n(d,T,\rho,\epsilon)}{n^\tau}=0$$
		for all  $\epsilon>0$ and $\rho\in \mathcal{M}(X_y,\sigma)$.
	\end{lem}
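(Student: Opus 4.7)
The plan is to reduce to the rotation system of Lemma~\ref{LEMMA} via an equivariant coding and then transfer the mean-complexity bound. Let $p^\infty = (p)_{n\in\mathbb{Z}} \in X_y$, which is fixed by $\sigma$, and define $\Phi: C\times\mathbb{T}\to(\mathbb{T}\cup\{p\})^{\mathbb{Z}}$ by $\Phi(\theta,z)(n) = ze(n\theta)$. Then $\Phi$ is continuous, satisfies $\Phi\circ T = \sigma\circ\Phi$ with $T$ the rotation of Lemma~\ref{LEMMA}, and has compact image $\widetilde{C}\setminus\{p^\infty\}$, a $\sigma$-invariant subset of $X_y$. Fix $\rho\in\mathcal{M}(X_y,\sigma)$ and $\epsilon>0$; by Lemma~\ref{lem}, $\rho$ is supported on $\widetilde{C}$. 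Let $\alpha = \rho(\{p^\infty\})$. The case $\alpha=1$ is trivial, so assume $\alpha<1$ and set $\rho' = (1-\alpha)^{-1}\rho|_{\widetilde{C}\setminus\{p^\infty\}}$, a $\sigma$-invariant probability on the compact set $\widetilde{C}\setminus\{p^\infty\}$. Since $\Phi$ is a continuous factor map between compact systems, a standard Krylov--Bogolyubov-type averaging produces a $T$-invariant probability $\tilde\rho'\in\mathcal{M}(C\times\mathbb{T},T)$ with $\Phi_*\tilde\rho' = \rho'$.

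The key technical ingredient is a mean-metric estimate: there is a constant $C_0$, depending only on the base metrics, such that for any $w_1,w_2\in C\times\mathbb{T}$ and any $n\in\mathbb{N}$,
$$
\overline{D}_n(\Phi(w_1),\Phi(w_2)) \le \overline{d}_n(w_1,w_2) + \frac{C_0}{n},
$$
where $D(u,v) = \sum_{j\in\mathbb{Z}}\frac{d_0(u(j),v(j))}{2^{|j|+2}}$ is the sequence metric on $(\mathbb{T}\cup\{p\})^{\mathbb{Z}}$. To prove it, one expands $\overline{D}_n$ and interchanges the outer average over $k\in[0,n)$ with the weighted sum over $j\in\mathbb{Z}$. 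Since every coordinate of $\Phi(w_i)$ lies in $\mathbb{T}$, we have $d_0(\Phi(w_1)(m),\Phi(w_2)(m))\le d(T^m w_1,T^m w_2)$, and the inner average becomes $\frac{1}{n}\sum_{m=j}^{j+n-1}d(T^m w_1,T^m w_2)$; comparing the range $[j,j+n-1]$ with $[0,n-1]$ introduces a boundary error of at most $2\,\mathrm{diam}\cdot|j|/n$, which sums against the exponential weights $2^{-|j|-2}$ to produce the $C_0/n$ term.

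Take $\epsilon'=\epsilon/2$ and choose $N_0$ with $C_0/N_0<\epsilon/2$. By Lemma~\ref{LEMMA} applied to $\tilde\rho'$, there exists $n_k\to\infty$ along which $S_{n_k}(d,T,\tilde\rho',\epsilon')/n_k^\tau\to 0$; let $\{w_i\}_{i=1}^{m_k}\subset C\times\mathbb{T}$ realise this bound. For $n_k\ge N_0$, the estimate above gives $\Phi\bigl(B_{\overline{d}_{n_k}}(w_i,\epsilon')\bigr)\subset B_{\overline{D}_{n_k}}(\Phi(w_i),\epsilon)$, hence $\rho'\bigl(\bigcup_i B_{\overline{D}_{n_k}}(\Phi(w_i),\epsilon)\bigr)\ge \tilde\rho'\bigl(\bigcup_i B_{\overline{d}_{n_k}}(w_i,\epsilon')\bigr)>1-\epsilon'$. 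Including the extra centre $p^\infty$ absorbs the atom and raises the $\rho$-measure of the cover to at least $\alpha+(1-\alpha)(1-\epsilon')\ge 1-\epsilon$, so $S_{n_k}(d,\sigma,\rho,\epsilon)\le S_{n_k}(d,T,\tilde\rho',\epsilon')+1$, and dividing by $n_k^\tau$ yields the desired $\liminf=0$. The main obstacle is the mean-metric inequality: once the index bookkeeping is straight it is a routine calculation, but one must verify that the boundary error remains summable against the exponential weights.
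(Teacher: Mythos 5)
Your proposal is correct and follows essentially the same route as the paper: identify the supports of invariant measures via Lemma \ref{lem}, realise $\widetilde{C}\setminus\{p^\infty\}$ as a factor of the rotation system $(C\times\mathbb{T},T)$, and invoke Lemma \ref{LEMMA}. The paper dismisses the transfer step as ``immediate,'' whereas you supply the missing details (the measure lift, the treatment of the atom at $p^\infty$, and the mean-metric comparison $\overline{D}_n\circ\Phi\le \overline{d}_n+C_0/n$), all of which check out.
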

	\begin{proof} Fix a pair $(y,C)$ which meets Property $(*)$  as in Lemma \ref{lem}. Then all measures in $Gen(y)$
support on a compact set $$\widetilde{C}=\{(ze(i\theta))_{i\in \mathbb{Z}}
\in \mathbb{T}^{\mathbb{Z}}: \theta\in C, z\in\mathbb{T}\}\cup\{p\}^\mathbb{Z}.$$

	It is clear that $\widetilde{C}$ is a $\sigma$-invariant compact subset of
	$\left(\mathbb{T}\cup\{p\}\right)^\mathbb{Z}$, that is $(\widetilde C,\sigma)$ is
a t.d.s. Notice that $(\widetilde C,\sigma)$ is a factor of
$(C\times \mathbb{T}\cup\{p\},T)$, where $T:  C\times \mathbb{T}\cup\{p\} \to  C\times \mathbb{T}\cup\{p\}$
with $T(p)=p$ and $T\big(x,e(y)\big)= \big(x,e(y+x)\big)$ for $(x,e(y))\in C\times \mathbb{T}$.
The lemma is immediately from Lemma \ref{LEMMA}.
	\end{proof}

The final lemma we need is the following one.
	
	\begin{lem}\label{3-7} If there exist a non-empty compact subset $C$ of $[0,1]$ and
$\beta\in\mathbb{R}$ such that \begin{align}\label{A-0000}\limsup_{H\to+\infty}\limsup_{N\to+\infty}
\mathbb{E}_{n\le N}^{log}\max\{Re\left(\sup_{\alpha\in C}e(\beta)\mathbb{E}_{h\le H}\mu(n+h)e(h\alpha)\right),0\}>0,
	\end{align}
	then there is $y\in \left(\mathbb{T}\cup\{p\}\right)^{\mathbb{Z}}$ such that $(y,C)$ meets Property $(*)$ as in Lemma \ref{lem} and
	\begin{align}\label{eq-3}\limsup_{N\to\infty}|\mathbb{E}_{n\le N}^{log}
	\mu (n)\widetilde F(\sigma^ny)|>0,	\end{align}
	where $\widetilde F: \left(\mathbb{T}\cup\{p\}\right)^{\mathbb{Z}}\to \mathbb{C}$ is the continuous
function defined by $\widetilde F( z)=z(0)$ if $z(0)\in \mathbb{T}$ and $0$ if $z(0)=p$.
\end{lem}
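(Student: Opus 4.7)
The plan is to mimic Step 1 of the proof that (2) implies (1) in Theorem \ref{thm-1}, replacing the nilpotent orbit $g^hx_0$ there by the rotation $h\mapsto e(h\alpha)$ with $\alpha\in C$. Starting from \eqref{A-0000}, I would first fix $\tau\in(0,1)$ for which the set
$$E := \{H\in\mathbb{N} : \limsup_{N\to+\infty}\mathbb{E}_{n\le N}^{log}\max\{\sup_{\alpha\in C}Re(e(\beta)\mathbb{E}_{h\le H}\mu(n+h)e(h\alpha)), 0\} > \tau\}$$
is infinite, and set $\sigma=\tau^2/200$. By induction I would then build strictly increasing sequences $\{H_i\}\subset E$ and $\{N_i\}\subset \mathbb{N}$ satisfying $H_i<\sigma N_i^\sigma<(\sigma/10)H_{i+1}^\sigma$, together with an almost-maximizer $\alpha_{n,i}\in C$ for each $1\le n\le N_i$; such a selection is legitimate because $\alpha\mapsto Re(e(\beta)\mathbb{E}_{h\le H_i}\mu(n+h)e(h\alpha))$ is continuous on the compact set $C$.

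Next, I would set
$$S_i := \{1\le n\le N_i : Re(e(\beta)\mathbb{E}_{h\le H_i}\mu(n+h)e(h\alpha_{n,i})) > \tau/2\},$$
so that $\sum_{n\in S_i}1/n > (\tau/2)M_i$ with $M_i=\sum_{n\le N_i}1/n$. Trimming the initial segment $[1,N_i^\sigma]$ and thinning to $S_i'\subset S_i\setminus[1,N_i^\sigma]$ with consecutive gaps at least $2H_i$ (so the forthcoming windows of length $H_i$ remain pairwise disjoint) while retaining $\sum_{n\in S_i'}1/n > \tau M_i/(8H_i)$ proceeds exactly as in Step 1 of the proof of Theorem \ref{thm-1}. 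I would then define $y\in(\mathbb{T}\cup\{p\})^{\mathbb{Z}}$ by
$$y(n+h) := e(h\alpha_{n,i}) \quad \text{for } n\in S_i',\, 1\le h\le H_i,\, i\in\mathbb{N},$$
and $y(m) := p$ otherwise. The windows $[n+1, n+H_i+1)$ are disjoint within each $i$ by the gap condition and across $i$ by $N_i+H_i<H_{i+1}$; enumerating them in increasing order realises Property $(*)$ with $m_k=n+1$, $\theta_k = \phi_k = \alpha_{n,i}\in C\subset[0,1]$, and lengths $n_k-m_k = H_i\to\infty$, since each $S_i'$ is finite.

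The final step, establishing \eqref{eq-3}, is the analogue of the corresponding estimate in Step 1 of Theorem \ref{thm-1}. The definition of $S_i$ gives
$$\left|\sum_{n\in S_i'}\frac{1}{n}\sum_{h\le H_i}\mu(n+h)\widetilde F(\sigma^{n+h}y)\right| > \frac{\tau^2 M_i}{16};$$
replacing $1/n$ by $1/(n+h)$ costs at most $\sigma\sum_{n\in S_i'}1/n < \tau^2 M_i/32$ via the growth condition $H_i<\sigma N_i^\sigma$; reindexing $m=n+h$ converts the double sum into a single sum over $N_i^\sigma < m \le N_i+H_i$; and the tails from $[1,N_i^\sigma]$ and $(N_i, N_i+H_i]$ are $O(\sigma)$ in log-average by the same growth condition. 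Reassembling these bounds yields $|\mathbb{E}_{n\le N_i}^{log}\mu(n)\widetilde F(\sigma^n y)| \ge \tau^2/100$ for $i$ large, which is \eqref{eq-3}. I expect no serious obstacle beyond bookkeeping: the construction is dictated by the pigeonhole producing $E$ and the almost-maximizers, by the growth condition separating the windows from each other and from the initial tail, and by the by-now-standard log-average truncation estimates.
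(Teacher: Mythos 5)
Your proposal is correct and follows essentially the same route as the paper's own proof: the paper likewise transplants Step 1 of the proof of (2)$\Rightarrow$(1) in Theorem \ref{thm-1}, choosing $\tau$, the sequences $H_i, N_i$ with the growth condition $H_i<\sigma N_i^\sigma<\frac{\sigma}{10}H_{i+1}^\sigma$, near-maximizers $\alpha_{n,i}\in C$, the sets $S_i, S_i'$, the same definition of $y$, and the same truncation estimates leading to the lower bound $\tau^2/100$. No gaps.
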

\begin{proof}
	By the assumption \eqref{A-0000} and the similar arguments as in the proof of Theorem \ref{thm-1}
(2)$\Longrightarrow$(1), we can find $\tau\in(0,1)$, strictly increasing sequences
$\{H_i\}_{i\in\mathbb{N}},\{N_i\}_{i\in\mathbb{N}}$ of natural numbers, series
$\{\alpha_{i,j}\}_{j=1}^{N_i}\subset \mathbb{R}$ , $i=1,2,3\cdots$ and $\beta\in
\{0,\frac{1}{4},\frac{2}{4},\frac{3}{4}\}$ such that for each $i\in \mathbb{N}$ one has	
\begin{align}\label{C2}
 H_i<\sigma N_i^\sigma<\frac{\sigma}{10}H_{i+1}^\sigma\text{ where } \sigma=\frac{\tau^2}{200}
\end{align}
and
\begin{align}\label{C4}	\mathbb{E}_{n\le N_i}^{log}\max\{Re\left(e(\beta)
\mathbb{E}_{h\le H_i}\mu (n + h)e(h\alpha_{n,i})\right),0\}>\tau.
\end{align}
For $i\in \mathbb{N}$, let $M_i=\sum_{n=1}^{N_i}\frac{1}{n}$
and
\begin{align}\label{C23}
	S_i=\{n\in[1,N_i]:Re\left(e(\beta)	\mathbb{E}_{h\le H_i}\mu (n + h)e(h\alpha_{n,i})\right)>\frac{\tau}{2}\}.\end{align}
Then by \eqref{C4}, one has
	\begin{align}\label{C33}
		\sum_{n\in S_i}\frac{1}{n}>\frac{\tau}{2}M_i,
	\end{align}
	Notice that $\lim_{N\to+\infty}\frac{\sum_{n\le  N^\sigma}\frac{1}{n}}{\sum_{n\le N}\frac{1}{n}}=\sigma.$ We have
$$\sum_{n\in S_i\setminus[1,N_{i}^\sigma]}\frac{1}{n}\overset{\eqref{C33}}>\frac{\tau}{2}M_i-\sum_{n\le N_{i}^\sigma}\frac{1}{n}>\frac{\tau}{2}M_i-2\sigma M_{i-1}\overset{\eqref{C2}}>\frac{\tau}{4}M_i
$$	
for $i\in\mathbb{N}$ large enough.
Then we can choose $S_i'\subset S_i\setminus[1,N_{i}^\sigma]$ such that  each gap in $S_i'$ is not less than $2H_i$ and
	\begin{align}\label{C3}\sum_{n\in S'_i}\frac{1}{n}> \frac{\tau M_i}{8H_i}\end{align}
	for $i\in\mathbb{N}$ large enough.
	 Define $y: \mathbb{Z}\to \mathbb{T}\cup\{p\}$ such that
	$$y(j)=e\big((j-n)\alpha_{n,i}\big) \text{ if }j\in [n+1,n+H_i] \text{ for some }i\ge 1\text{ and }n\in S_i',$$
	and $y(j)=p$ for other $j$, where $p$ is the zero of $\mathbb{C}$. It is not hard to see that $y$ is well defined and  meets Property $(*)$.
	
	Now we are going to show that \eqref{eq-3} holds. 	Combining \eqref{C23} with \eqref{C3}, one has
	\begin{align}\label{22-1}
	Re\left(e(\beta)\sum_{n\in S_i'}\sum_{h\le H_i}\frac{\mu (n + h) \widetilde F(\sigma^{n+h}y) }{n}\right)>\frac{\tau}{2}\times H_i\times\sum_{n\in S_i'}\frac{1}{n}>\frac{\tau^2}{16}M_i
	\end{align}
for $i\in\mathbb{N}$ large enough.	Then
\begin{align*}&\left|\sum_{n\in S_i'}\sum_{h\le H_i}\frac{\mu(n+h)\widetilde F(\sigma^{n+h}y)}{n}-\sum_{n\in S_i'}\sum_{h\le H_i}\frac{\mu(n+h)\widetilde F(\sigma^{n+h}y)}{n+h}\right|\\
\le &  \sum_{n\in S_i'}	\sum_{h\le   H_i}(\frac{1}{n}-\frac{1}{n+h})\le \sum_{n\in S_i'}	\sum_{h\le H_i}\frac{H_i}{n(n+H_i)}\\
	\le& \sum_{n\in S_i'}	\frac{H_i}{nN_i^\sigma}
	\overset{\eqref{C2}}\le \sigma\sum_{n\in S_i'}\frac{1}{n}\overset{\eqref{C2}}\le \frac{\tau^2}{32}M_i
\end{align*}
for $i\in\mathbb{N}$ large enough.	
Combining this inequality with \eqref{22-1}, one has

\begin{align*}\left|\sum_{N_{i}^\sigma<n\le N_i+H_i}\frac{\mu (n )\widetilde F(\sigma^{n}y)}{n}\right|&=\left|\sum_{n\in S_i'}\sum_{h\le H_i}\frac{\mu(n+h)\widetilde F(\sigma^{n+h}y)}{n+h}\right|\\
	&\ge Re\left(e(\beta)\sum_{n\in S_i'}\sum_{h\le H_i}\frac{\mu (n + h)y(n+h)}{n}\right)-\frac{\tau^2M_i}{32}\\
&\ge \frac{\tau^2M_i}{32}
\end{align*}
for $i\in\mathbb{N}$ large enough.
Thus
{\small
	\begin{align*}\left|\mathbb{E}_{n\le N_i}^{log}
	\mu (n)\widetilde F(\sigma^ny)\right|&\ge\left|\frac{1}{M_i}\sum_{n\le N_i+H_i}\frac{\mu (n )
\widetilde F(\sigma^ny)}{n}\right|-\frac{1}{M_i}\sum_{N_i<n\le N_i+H_i\text{ or }\atop n
\le N_{i}^\sigma}\left|\frac{\mu (n )\widetilde F(\sigma^ny)}{n}\right|\\
	&\ge \frac{\tau^2}{32}-2\sigma-\frac{H_i}{N_i}\overset{\eqref{C2}}\ge\frac{\tau^2}{100}>0
	\end{align*}}
for $i\in\mathbb{N}$ large enough.
		Therefore, $y$ is the point as required.  This ends the proof of Lemma \ref{3-7}.
\end{proof}

Now we are ready to prove Theorem \ref{thm-2}.
	\begin{proof}[Proof of Theorem \ref{thm-2}] 	
	Assume that  Theorem \ref{thm-2} is not valid. Then there exists a non-empty compact subset
$C$ of $[0,1]$ with $Dim_P C<1$ such that
		$$\limsup_{H\to+\infty}\limsup_{N\to+\infty}\mathbb{E}_{n\le N}^{log}\sup_{\alpha\in C}|
\mathbb{E}_{h\le H}\mu(n+h)e(h\alpha)|>0.$$
		Thus we can find
		 $\beta\in\{0,\frac{1}{4},\frac{2}{4},\frac{3}{4}\}$ such that
		\begin{align*}\limsup_{H\to+\infty}\limsup_{N\to+\infty}\mathbb{E}_{n\le N}^{log}
\max\{\sup_{\alpha\in C}Re\left(e(\beta)\mathbb{E}_{h\le H}\mu(n+h)e(h\alpha)\right),0\}>0.\end{align*}
		By Lemma \ref{3-7}, there is $y\in \left(\mathbb{T}\cup\{p\}\right)^{\mathbb{Z}}$
such that $(y,C)$ meets  Property $(*)$ as in Lemma \ref{lem} and
		\begin{align}\label{eq-333}\limsup_{N\to\infty}|\mathbb{E}_{n\le N}^{log}
		\mu (n)\widetilde F(\sigma^ny)|>0,	\end{align}
		where $\widetilde F: X_y\to \mathbb{R}$ is a continuous function defined by $\widetilde
F(z)=z(0)$ if $z(0)\in \mathbb{T}$ and $0$ if $z(0)=p$. Then, by Lemma \ref{lem-1} and
assumption $Dim_P C<1$, the t.d.s. $(X_y,\sigma)$ satisfies $$\liminf_{n\rightarrow +\infty}
\frac{S_n(d,\sigma,\rho,\epsilon)}{n}=0 \text{	for any } \epsilon>0\text{ and }\rho\in \mathcal{M}(X_y,\sigma).$$
		By Theorem \ref{rem-linear},
		$$\lim_{N\to\infty}\mathbb{E}_{n\le N}^{log}
		\mu (n)\widetilde F(\sigma^ny)=0.$$ This conflicts with \eqref{eq-333} and the theorem
follows. We end the proof of Theorem \ref{thm-2}.
	\end{proof}

\appendix
\section{Proof of Theorem \ref{rem-linear} }\label{Appendix-A}
%\subsection{The proof of Theorem \ref{main-result1}}
In this appendix we prove Theorem \ref{rem-linear} following the arguments of the proof of Theorem 1.1' in \cite{HWY}.

%\begin{proof}[Proof of Remark \ref{rem-linear}]
Let $(X,T)$ be a t.d.s. with a metric $d$ and sub-linear mean measure complexity.
To prove that the logarithmic Sarnak conjecture holds for $(X,T)$, it is sufficient to show
\begin{align}\label{mineq-1}
	\limsup_{i\rightarrow +\infty}\left|\frac{1}{\sum_{n=1}^{N_i} \frac{1}{n} } \sum_{n=1}^{N_i} \frac{\mu(n)f(T^nx)}{n}\right|<7\epsilon
\end{align}
for any $\epsilon\in (0,1)$ and $f\in C(X)$ with $\max_{z\in X}|f(z)|\le 1$,
$x\in X$ and $\{ N_1<N_2<N_3<\cdots\} \subseteq \mathbb{N}$ such that the sequence
$\mathbb{E}^{log}_{n\le N_i}\delta_{T^nx}$ weakly$^*$ converges to a Borel probability measure $\rho$.

To this aim we will find $L\in \mathbb{N}$, $\{x_1,x_2,\cdots,x_m\}\subset X$ and
$j_n\in \{1,2,\cdots,m\}$ for $n=1,2,3,\cdots$ such that that for large $i$
\begin{align}\label{control-es-1}
	\left|\frac{1}{M_i}\sum_{n=1}^{N_i} \frac{\mu(n)f(T^nx)}{n}-\frac{1}{M_i}\sum_{n=1}^{N_i}
 \Big( \frac{1}{L} \sum_{\ell=0}^{L-1} \frac{\mu(n+\ell) f(T^\ell x_{j_n})}{n}\Big)\right|<5\epsilon
\end{align}
and
\begin{align} \label{control-es-2}
	\left|\frac{1}{M_i}\sum_{n=1}^{N_i} \Big( \frac{1}{L}\sum_{\ell=0}^{L-1} \frac{\mu(n+\ell)
f(T^\ell x_{j_n})}{n}\Big)\right|<2\epsilon.
\end{align}

It is clear that \eqref{mineq-1} follows by \eqref{control-es-1} and \eqref{control-es-2}.
(\ref{control-es-1}) and (\ref{control-es-2}) will be proved in Lemma \ref{A-lem-2} and Lemma \ref{A-lem-3})
respectively, where we write $M_i= \sum_{n=1}^{N_i}
\frac{1}{n}$ for $i\in \mathbb{N}$.
%Notice that $\{x_{j_n}\}\subset X$ will be defined in the following process of the proof.

 To prove the two lemmas we firstly choose $\epsilon_1>0$ such that $\epsilon_1<\epsilon^2$ and
\begin{equation}\label{2017-3}
|f(y)-f(z)|<\epsilon\  \text{when}\ d(y,z)<\sqrt{\epsilon_1}.
\end{equation}
Since $\mathbb{E}^{log}_{n\le N_i}\delta_{T^nx}$ weakly$^*$ converges to $\rho$,
it is not hard to verify $\rho\in \mathcal{M}(X,T)$. So, the measure complexity of $(X,d,T,\rho)$
is sub-linear by the assumption of the theorem, and thus there exists $L>0$ such that
\begin{equation}\label{2017-2}
m=S_L(d,T,\rho,\epsilon_1)<\epsilon L.
\end{equation}
This means that there exist $x_1,x_2,\cdots,x_m\in X$ such that
$$\rho\big(\bigcup_{i=1}^m B_{\overline{d}_L}(x_i,\epsilon_1)\big)>1-\epsilon_1>1-\epsilon^2.$$
Put $U=\bigcup_{i=1}^m B_{\overline{d}_L}(x_i,\epsilon_1)$ and $E=\{n\in \mathbb{N}:T^n x\in U\}$.
Then $U$ is open and so
\begin{align}\label{ineq-0}
	\liminf_{i\rightarrow +\infty}\frac{1}{M_i}\sum_{n\in E\cap [1,N_i]}\frac{1}{n}
=\liminf_{i\rightarrow +\infty}\frac{1}{M_i}\sum_{n=1}^{N_i} \frac{\delta_{T^nx}(U)}{n}\ge \rho(U)>1-\epsilon_1.
\end{align}

For $n\in E$, we choose $j_n\in \{1,2,\cdots,m\}$ such that $T^nx\in B_{\overline{d}_L}(x_{j_n},\epsilon_1)$. Hence for $n\in E$, we have
$\overline{d}_L(T^nx,x_{j_n})<\epsilon_1$, i.e.
%For every $n\in \mathbb{N}$,
$$\frac{1}{L}\sum_{\ell=0}^{L-1} d\big( T^\ell(T^nx),T^\ell(x_{j_n})\big)<\epsilon_1$$
and so we have
\begin{equation}\label{2017-4}
\#\{\ell\in [0,L-1]:d(T^{\ell} (T^n x),T^{\ell}x_{j_n})\ge \sqrt{\epsilon_1}\}<L\sqrt{\epsilon_1}<L\epsilon.
\end{equation}
Thus for $n\in E$
\begin{align}\label{ineq-33}
	%\begin{array}{ll}
	&\hskip0.5cm \frac{1}{L} \sum_{\ell=0}^{L-1} |f(T^{\ell} (T^n x) )-f(T^\ell x_{j_n})|\nonumber\\
	&\le \frac{1}{L} \big(\epsilon\#\{\ell\in [0,L-1]:d(T^{\ell} (T^n x),T^{\ell}x_{j_n})<\sqrt{\epsilon_1}\}\\
	&\hskip1cm+2\#\{\ell\in [0,L-1]:d(T^{\ell} (T^n x),T^{\ell}x_{j_n})\ge \sqrt{\epsilon_1}\}\big)\nonumber \\
	&<3\epsilon \nonumber,
	%\end{array}
\end{align}
by using the inequality \ref{2017-3}, (\ref{2017-4}) and the assumption $\max_{x\in X}|f(x)|\le 1$.

For each $n\notin E$, we simply set $j_n=1$.

We first establish  %Then  \eqref{control-es-1} and \eqref{control-es-2}  follow from the following
Lemma \ref{A-lem-2}.
\begin{lem}\label{A-lem-2} For all sufficiently large $i$,
	$$\left|\frac{1}{M_i}\sum_{n=1}^{N_i} \frac{\mu(n)f(T^nx)}{n}-\frac{1}{M_i}\sum_{n=1}^{N_i}
\frac{1}{L} \sum_{\ell=0}^{L-1}\frac{ \mu(n+\ell) f(T^\ell x_{j_n})}{n} \right|<5\epsilon.$$
\end{lem}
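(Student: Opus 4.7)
The plan is to prove Lemma \ref{A-lem-2} in three steps by progressively transforming the left-hand sum into the right-hand sum, controlling each error by a constant multiple of $\epsilon$.

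First, I would introduce the intermediate quantity
$$A_i = \frac{1}{M_i L}\sum_{n=1}^{N_i}\sum_{\ell=0}^{L-1}\frac{\mu(n+\ell)\,f(T^{n+\ell}x)}{n+\ell}$$
and show that $A_i$ is close to $\frac{1}{M_i}\sum_{n=1}^{N_i}\frac{\mu(n)f(T^n x)}{n}$. Reindexing by $k=n+\ell$, each $k\in[L,N_i]$ is hit exactly $L$ times, and the boundary terms $k<L$ or $k>N_i$ contribute at most $2L/M_i \to 0$ (since $|\mu|,|f|\le 1$ and $M_i\to\infty$). So this replacement costs $o(1)$.

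Second, I would replace the weight $\frac{1}{n+\ell}$ by $\frac{1}{n}$ in $A_i$. Because $\bigl|\tfrac{1}{n+\ell}-\tfrac{1}{n}\bigr|\le \tfrac{L}{n(n+\ell)}\le \tfrac{L}{n^2}$, the total error is at most
$$\frac{1}{M_i L}\sum_{n=1}^{N_i}\sum_{\ell=0}^{L-1}\frac{L}{n^2}\;\le\;\frac{1}{M_i}\sum_{n=1}^{\infty}\frac{L}{n^2},$$
which tends to $0$ as $i\to\infty$. Thus after two steps we have arrived within $o(1)$ of
$$B_i = \frac{1}{M_i}\sum_{n=1}^{N_i}\frac{1}{n}\Big(\frac{1}{L}\sum_{\ell=0}^{L-1}\mu(n+\ell)\,f(T^{n+\ell}x)\Big).$$

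Third, I would compare $B_i$ to the right-hand sum of Lemma \ref{A-lem-2}, which amounts to replacing $f(T^{n+\ell}x)$ by $f(T^\ell x_{j_n})$ in the inner average. Using $|\mu|\le 1$, the difference is bounded by
$$\frac{1}{M_i}\sum_{n=1}^{N_i}\frac{1}{n}\cdot\frac{1}{L}\sum_{\ell=0}^{L-1}\bigl|f(T^{n+\ell}x)-f(T^\ell x_{j_n})\bigr|.$$
I split the outer sum according to whether $n\in E$ or $n\notin E$. On $E$, estimate \eqref{ineq-33} gives the inner average $<3\epsilon$. On $E^c$, we only have the trivial bound $2$, but by \eqref{ineq-0}, $\limsup_i \frac{1}{M_i}\sum_{n\in E^c\cap[1,N_i]}\frac{1}{n}\le \epsilon_1<\epsilon^2$, contributing at most $2\epsilon^2<2\epsilon$. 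Summing gives a limiting bound strictly less than $3\epsilon+2\epsilon=5\epsilon$.

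The main conceptual step is the first (the reindexing), while the main quantitative step is the third. The error budget is tight: the statement asks for $5\epsilon$, and indeed the dominant terms from Steps 1 and 2 vanish as $i\to\infty$, so essentially all of the $5\epsilon$ is absorbed by Step 3, where the bound $3\epsilon+2\epsilon_1<5\epsilon$ is achieved using the definition $\epsilon_1<\epsilon^2<\epsilon$ built into the setup.
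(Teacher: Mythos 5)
Your proposal is correct and takes essentially the same route as the paper: the paper passes directly to the intermediate quantity $\frac{1}{M_i}\sum_{n}\frac{1}{L}\sum_{\ell}\frac{\mu(n+\ell)f(T^{n+\ell}x)}{n}$ (asserting the $o(1)$ replacement, which your Steps 1--2 merely spell out via reindexing and the weight change $\frac{1}{n+\ell}\to\frac{1}{n}$), and then performs exactly your Step 3 split over $E$ and its complement, using \eqref{ineq-33} and \eqref{ineq-0} to land at $3\epsilon+2\epsilon^2<5\epsilon$.
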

\begin{proof} As $\max_{x\in X}|f(x)|\le 1$, it is not hard to see that
	\begin{align}\label{ineq-1}
		\limsup_{i\to+\infty}\left|\frac{1}{M_i}\sum_{n=1}^{N_i} \frac{\mu(n)f(T^nx)}{n}-\frac{1}{M_i}\sum_{n=1}^{N_i}
\frac{1}{L} \sum_{\ell=0}^{L-1} \frac{\mu(n+\ell) f(T^{n+\ell} x)}{n} \right|
		=0.
	\end{align}
	
	By \eqref{ineq-0} once $i$ is large enough,
	\begin{align}\label{ineq-2}
		\frac{1}{M_i}\sum_{n\in E\cap [1,N_i]}\frac{1}{n}>1-\epsilon^2>1-\epsilon.
	\end{align}
	Now
	\begin{align*}
		&\hskip0.5cm \left|\frac{1}{M_i}\sum_{n=1}^{N_i} \frac{1}{L} \sum_{\ell=0}^{L-1}
\frac{\mu(n+\ell) f(T^{n+\ell} x)}{n} - \frac{1}{M_i}\sum_{n=1}^{N_i}  \frac{1}{L} \sum_{\ell=0}^{L-1} \frac{\mu(n+\ell) f(T^{\ell} x_{j_n})}{n} \right|\\
		%&\le \frac{1}{N}\sum_{n=1}^N |\frac{1}{L} \sum_{\ell=0}^{L-1} \mu(n+\ell) \big( f(T^{n+\ell} x)-f(T^\ell x_n)\big)|\\
		&\le  \frac{1}{M_i}\sum_{n=1}^{N_i} \frac{1}{L} \sum_{\ell=0}^{L-1} \frac{\left|f(T^{\ell} (T^n x) )-f(T^\ell x_{j_n})\right|}{n}\\
		&\le \frac{1}{M_i}\sum_{n\in [1,N_i]\setminus E} \frac{1}{L} \sum_{\ell=0}^{L-1} \frac{\left|f(T^{\ell} (T^n x) )-f(T^\ell x_{j_n})\right|}{n}\\
		&\hskip3cm+\frac{1}{M_i}\sum_{n\in E\cap [1,N_i]} \frac{1}{L} \sum_{\ell=0}^{L-1} \frac{\left|f(T^{\ell} (T^n x) )-f(T^\ell x_{j_n})\right|}{n}\\
		&<\frac{2}{M_i}\sum_{n\in [1,N_i]\setminus E}\frac{1}{n}+\frac{3\epsilon}{M_i}\sum_{n\in E\cap [1,N_i]}\frac{1}{n}     \ \ \ \text{ (by \eqref{ineq-33})}\\
		&< \frac{2}{M_i}\sum_{n\in [1,N_i]\setminus E}\frac{1}{n}+3\epsilon.
	\end{align*}
	
	Combining this inequality with \eqref{ineq-2}, when $i$ is large enough,
	{\small	\begin{align}\label{ineq-4}
			\begin{split}
				&\hskip0.5cm  \left|\frac{1}{M_i}\sum_{n=1}^{N_i} \frac{1}{L} \sum_{\ell=0}^{L-1}
\frac{\mu(n+\ell) f(T^{n+\ell} x)}{n} - \frac{1}{M_i}\sum_{n=1}^{N_i}  \frac{1}{L} \sum_{\ell=0}^{L-1} \frac{\mu(n+\ell) f(T^{\ell} x_{j_n})}{n} \right| \\
				&< 5\epsilon.
			\end{split}
	\end{align}}
	So the lemma follows by \eqref{ineq-1} and \eqref{ineq-4}. This ends the proof of Lemma \ref{A-lem-2}.
\end{proof}

Now we proceed to show  Lemma \ref{A-lem-3}.
\begin{lem} \label{A-lem-3}For all sufficiently large $i$,
	$$\left|\frac{1}{M_i}\sum_{n=1}^{N_i}  \frac{1}{L} \sum_{\ell=0}^{L-1} \frac{\mu(n+\ell) f(T^\ell x_{j_n})}{n} \right|< 2\epsilon.$$
\end{lem}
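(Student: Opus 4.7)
The plan is to bound the squared modulus of the displayed quantity via a Cauchy--Schwarz argument, then expand the square and invoke the two-term logarithmic Chowla relation \eqref{Tao} to dispose of the off-diagonal terms. The fact that only $m$ points $x_1,\ldots,x_m$ appear among the $x_{j_n}$, with $m < \epsilon L$ by \eqref{2017-2}, is what keeps the estimate bounded as $L\to\infty$.

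Set $a_n^{(j)} := \frac{1}{L}\sum_{\ell=0}^{L-1}\mu(n+\ell)f(T^\ell x_j)$ for each $j\in\{1,\ldots,m\}$, so that the bracketed inner average equals $a_n^{(j_n)}$. First I would apply the weighted Cauchy--Schwarz inequality on the logarithmic average (with weights $\frac{1}{M_i n}$, whose sum is $1$) to obtain
\[
\left|\frac{1}{M_i}\sum_{n=1}^{N_i}\frac{a_n^{(j_n)}}{n}\right|^2 \;\le\; \frac{1}{M_i}\sum_{n=1}^{N_i}\frac{|a_n^{(j_n)}|^2}{n}.
\]
Since exactly one indicator $\mathbf{1}_{j_n=j}$ equals $1$ at each $n$, one has $|a_n^{(j_n)}|^2 = \sum_{j=1}^m \mathbf{1}_{j_n=j}|a_n^{(j)}|^2 \le \sum_{j=1}^m |a_n^{(j)}|^2$, so the task reduces to estimating $\frac{1}{M_i}\sum_{n=1}^{N_i}\frac{|a_n^{(j)}|^2}{n}$ for each fixed $j$ and summing on $j$.

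Next I would expand
\[
|a_n^{(j)}|^2 = \frac{1}{L^2}\sum_{\ell_1,\ell_2=0}^{L-1}\mu(n+\ell_1)\mu(n+\ell_2)f(T^{\ell_1}x_j)\overline{f(T^{\ell_2}x_j)}.
\]
The diagonal $\ell_1=\ell_2$ contributes at most $\frac{1}{L^2}\cdot L = \frac{1}{L}$ once summed against $\frac{1}{M_i n}$, using $|\mu|,|f|\le 1$ and $\frac{1}{M_i}\sum_n \frac{1}{n}=1$. For each fixed off-diagonal pair $\ell_1\ne\ell_2$, the two-term logarithmic Chowla relation \eqref{Tao} asserts
\[
\lim_{i\to\infty}\frac{1}{M_i}\sum_{n=1}^{N_i}\frac{\mu(n+\ell_1)\mu(n+\ell_2)}{n}=0,
\]
and since $L$ (hence the number of off-diagonal pairs) and $m$ are fixed, the total off-diagonal contribution is $o(1)$ as $i\to\infty$. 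Summing yields $\sum_{j=1}^m \frac{1}{M_i}\sum_n \frac{|a_n^{(j)}|^2}{n} \le \frac{m}{L}+o(1) < \epsilon + o(1)$. Taking square roots gives $\left|\frac{1}{M_i}\sum_n \frac{a_n^{(j_n)}}{n}\right|\le \sqrt{\epsilon}+o(1)$; a slightly sharper initial selection of $L$ (replacing $m<\epsilon L$ in \eqref{2017-2} by $m<(2\epsilon)^2 L$, which is permissible under the sub-linear mean measure complexity assumption) turns this into the required $<2\epsilon$.

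The main obstacle lies in arranging the Cauchy--Schwarz step to avoid a spurious $\sqrt{L}$ factor: if one pulls the absolute value outside and writes $\sum_{j=1}^m \frac{1}{M_i}\sum_{n:j_n=j}\frac{|a_n^{(j)}|}{n}$, then applying Cauchy--Schwarz on each $j$ separately produces estimates of order $\sqrt{m/L}\cdot\sqrt{L}\sim\sqrt{\epsilon L}$ that do not close. The key move is squaring first on the logarithmic average and then distributing $|a_n^{(j_n)}|^2$ across $j$ via the indicator identity, after which each restricted sum $\sum_{n:j_n=j}$ can be enlarged to the full $\sum_{n=1}^{N_i}$ so that \eqref{Tao} applies directly.
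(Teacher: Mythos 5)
Your argument is essentially identical to the paper's: the same weighted Cauchy--Schwarz step on the logarithmic average, the same enlargement of $|a_n^{(j_n)}|^2$ to $\sum_{j=1}^m|a_n^{(j)}|^2$, the same expansion into diagonal terms bounded by $m/L<\epsilon$ and off-diagonal terms killed by the two-term logarithmic Chowla relation. You are in fact slightly more careful than the paper at the final step, since the paper bounds the \emph{square} by $2\epsilon$ and silently identifies this with the stated bound; your adjustment of the threshold in \eqref{2017-2} (or simply accepting $\sqrt{2\epsilon}$ and adjusting the constant in \eqref{mineq-1}) is the right way to close that cosmetic gap.
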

\begin{proof} By Cauchy's inequality
	\begin{align*}
		&\hskip0.5cm \left|\frac{1}{M_i}\sum_{n=1}^{N_i}  \frac{1}{L} \sum_{\ell=0}^{L-1}\frac{ \mu(n+\ell) f(T^\ell x_{j_n}) }{n} \right|^2\\
		&\le \frac{1}{M_i}\sum_{n=1}^{N_i}\frac{1}{n}\left |\frac{1}{L} \sum_{\ell=0}^{L-1} \mu(n+\ell) f(T^\ell x_{j_n}) \right|^2\\
		&\le \frac{1}{M_i}\sum_{n=1}^{N_i}\frac{1}{n}\sum_{j=1}^m\left |\frac{1}{L} \sum_{\ell=0}^{L-1} \mu(n+\ell) f(T^\ell x_{j}) \right|^2\\
		&\le \frac{1}{L^2}\sum_{j=1}^m\sum_{\ell_1=0}^{L-1}\sum_{\ell_2=0}^{L-1}\frac{f(T^{\ell_1}x_j)
\overline{f}(T^{\ell_2}x_j)}{M_i}\sum_{n=1}^{N_i}\frac{\mu(n+\ell_1)\mu(n+\ell_2)}{n}.
	\end{align*}	
	Note that  $M_i\approx\log N_i$. Since the 2-terms logarithmic Chowla's conjecture holds \cite{Tao}, we have% (see Remark \ref{rem-linear}), one has
	$$\lim_{i\rightarrow \infty} \frac{1}{M_i} \sum_{n=1}^{N_i}\frac{\mu(n+\ell_1)\mu(n+\ell_2)}{n}=0$$
	for any $0\le \ell_1\neq \ell_2\le L-1$. Combining this equality with the fact that
$\max_{x\in X}|f(x)|\le1$, one has that for sufficiently large $i$,
	\begin{align*}
		&\hskip0.5cm \left|\frac{1}{M_i}\sum_{n=1}^{N_i}  \frac{1}{L} \sum_{\ell=0}^{L-1}\frac{ \mu(n+\ell) f(T^\ell x_{j_n}) }{n} \right|^2\\
		&<\epsilon + \sum_{j=1}^m\frac{1}{L^2}\sum_{\ell=0}^{L-1}\sum_{j=1}^m\frac{|f(T^{\ell}x_j)
\overline{f}(T^{\ell}x_j)|}{M_i}\sum_{n=1}^{N_i}\frac{|\mu(n+\ell)\mu(n+\ell)|}{n}\\
		&\le \epsilon+\frac{m}{L^2}\sum_{l=0}^{L-1}\frac{1}{M_i}\sum_{n=1}^{N_i}\frac{1}{n}\\
		&=\epsilon+\frac{m}{L}\\
		&\overset{\eqref{2017-2}}< 2\epsilon.
	\end{align*}
	This ends the proof of Lemma \ref{A-lem-3}.
\end{proof}

%\end{proof}

\section{Proof of Theorem \ref{thm-5}}\label{Appendix-B}
In this appendix, we prove Theorem \ref{thm-5}. As in the proof of Theorem \ref{thm-2}, we let $p$ be the zero of $\mathbb{C}$.
For a sequence $y\in \left(\mathbb{T}\cup\{p\}\right)^\mathbb{Z}$, we put
$X_y=\overline{\{\sigma^ny:n\in\mathbb{Z}\}}$, where $\sigma$ is the left shift.
To this aim, we give a lemma firstly.
\begin{lem}\label{A-3-7} If there exist a non-empty compact subset $C$ of $[0,1]$
and  $\beta\in\mathbb{R}$ such that \begin{align}\label{A-00000}\limsup_{H\to+\infty}\limsup_{N\to+\infty}
\mathbb{E}_{n\le N}\max\{\sup_{\alpha\in C}Re\left(e(\beta)\mathbb{E}_{h\le H}\mu(n+h)e(h\alpha)\right),0\}>0,
	\end{align}
	then there is $y\in \left(\mathbb{T}\cup\{p\}\right)^{\mathbb{Z}}$ such that $(y,C)$ meets  Property $(*)$ in Lemma \ref{lem} and
	\begin{align}\label{eq-30}\limsup_{N\to\infty}|\mathbb{E}_{n\le N}
		\mu (n)\widetilde F(\sigma^ny)|>0,
	\end{align}
	where $\widetilde F: X_y\to \mathbb{C}$ is a continuous function defined by $\widetilde F( z)=z(0)$ if $z(0)\in \mathbb{T}$ and $0$ if $z(0)=p$.
\end{lem}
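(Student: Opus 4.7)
The plan is to mimic, mutatis mutandis, the construction used in the proof of Lemma \ref{3-7}, with the linear Cesàro average replacing the logarithmic one throughout. The construction is actually slightly easier in this setting, since the linear average does not produce the $\frac{1}{n}$ versus $\frac{1}{n+h}$ discrepancy that needed to be controlled in the logarithmic case.

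First, from hypothesis \eqref{A-00000} I extract a threshold $\tau \in (0,1)$ and set $\sigma = \tau^{2}/200$. Since the $\limsup$ in $H$ is positive, the set of $H \in \mathbb{N}$ for which $\limsup_{N\to\infty}\mathbb{E}_{n\le N}\max\{\sup_{\alpha\in C}Re(e(\beta)\mathbb{E}_{h\le H}\mu(n+h)e(h\alpha)),0\} > \tau$ is infinite. Arguing inductively, I select strictly increasing sequences $\{H_{i}\} \subset \mathbb{N}$ and $\{N_{i}\} \subset \mathbb{N}$ satisfying $H_{i} < \sigma N_{i}^{\sigma} < (\sigma/10) H_{i+1}^{\sigma}$, and, for each $i$, phases $\alpha_{n,i} \in C$ (achieving the supremum up to a negligible error) such that
\begin{align*}
\mathbb{E}_{n\le N_{i}}\max\{Re(e(\beta)\mathbb{E}_{h\le H_{i}}\mu(n+h)e(h\alpha_{n,i})),0\} > \tau.
\end{align*}
Since the integrand is bounded by $1$, a Chebyshev-type argument gives that the set $S_{i} = \{n\in[1,N_{i}]: Re(e(\beta)\mathbb{E}_{h\le H_{i}}\mu(n+h)e(h\alpha_{n,i})) > \tau/2\}$ has cardinality at least $\tau N_{i}/2$. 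Removing the initial segment $[1,N_{i}^{\sigma}]$ still leaves $\#(S_{i}\setminus [1,N_{i}^{\sigma}]) > \tau N_{i}/4$ for $i$ large, after which a greedy selection produces $S'_{i} \subset S_{i}\setminus [1,N_{i}^{\sigma}]$ with consecutive gaps $\ge 2H_{i}$ and $\# S'_{i} \ge \tau N_{i}/(8 H_{i})$.

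I then define $y : \mathbb{Z} \to \mathbb{T}\cup\{p\}$ by $y(j) = e((j-n)\alpha_{n,i})$ when $j \in [n+1, n+H_{i}]$ for some $i\ge 1$ and $n \in S'_{i}$, and $y(j) = p$ otherwise. The gap condition on $S'_{i}$ together with the super-polynomial separation $H_{i+1} \gg N_{i}$ coming from the constraints on $\{H_i, N_i\}$ guarantee that these prescriptions are disjoint, so $y$ is well-defined, and $(y,C)$ visibly satisfies Property $(*)$ with the indices $m_{\bullet}, n_{\bullet}$ given by the endpoints $n+1, n+H_{i}+1$ and phases $\theta = \phi = \alpha_{n,i}$.

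For the quantitative conclusion \eqref{eq-30}, the definitions of $S_{i}$ and $S'_{i}$ yield
\begin{align*}
Re\Big(e(\beta)\sum_{n\in S'_{i}}\sum_{h\le H_{i}}\mu(n+h)e(h\alpha_{n,i})\Big) > \frac{\tau}{2}\cdot H_{i}\cdot \#S'_{i} \ge \frac{\tau^{2} N_{i}}{16}.
\end{align*}
Because $\widetilde F(\sigma^{m} y) = e(h\alpha_{n,i})$ exactly when $m = n+h$ with $n\in S'_{i}$ and $1\le h\le H_{i}$, and $\widetilde F(\sigma^{m}y) = 0$ otherwise, the growth $H_{i+1}\gg N_{i}$ and $H_{i} < \sigma N_{i}^{\sigma}$ imply that contributions from levels $j \le i-1$ are confined to $[1, N_{i}^{\sigma}]$ and contributions from levels $j \ge i+1$ lie beyond $N_{i}+H_{i}$. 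Consequently
\begin{align*}
\Big|\sum_{N_{i}^{\sigma} < n \le N_{i}+H_{i}} \mu(n)\widetilde F(\sigma^{n} y)\Big| \ge \frac{\tau^{2} N_{i}}{16}.
\end{align*}
Dividing by $N_{i}$ and absorbing the boundary terms $\frac{1}{N_{i}}\sum_{n\le N_{i}^{\sigma}}|\widetilde F(\sigma^{n}y)| \le N_{i}^{\sigma-1}$ and $\frac{1}{N_{i}}\sum_{N_{i}<n\le N_{i}+H_{i}}|\widetilde F(\sigma^{n}y)| \le H_{i}/N_{i} < \sigma N_{i}^{\sigma - 1}$, which both tend to $0$, gives $|\mathbb{E}_{n\le N_{i}}\mu(n)\widetilde F(\sigma^{n}y)| \ge \tau^{2}/32$ for $i$ large, which is \eqref{eq-30}.

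The only substantive step is the first one — extracting the sequences $\{H_i, N_i\}$ and the phases $\alpha_{n,i}$ with a uniform $\tau$ and the required super-polynomial separation — but this is carried out exactly as in the proof of Lemma \ref{3-7}. The rest is straightforward bookkeeping, and in fact is cleaner than the logarithmic case because no approximation of $1/n$ by $1/(n+h)$ is required.
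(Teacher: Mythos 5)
Your proposal is correct and is exactly the adaptation the paper intends: the authors prove Lemma \ref{A-3-7} by citing "similar arguments" to the logarithmic construction (the reference to Lemma \ref{lem} there is evidently a typo for Lemma \ref{3-7}), and your version carries out that adaptation faithfully, with the counting bounds $\#S_i>\tau N_i/2$ and $\#S_i'\ge \tau N_i/(8H_i)$ playing the role of the weighted sums $\sum_{n\in S_i}1/n$ and $\sum_{n\in S_i'}1/n$. Your observation that the Cesàro case avoids the $1/n$ versus $1/(n+h)$ correction is also accurate.
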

\begin{proof} It follows by a similar arguments of the proof of Lemma \ref{lem}.
\end{proof}	
Now we are going to prove Theorem \ref{thm-5}.
\begin{proof}[Proof of Theorem \ref{thm-5}] Assume the contrary that Theorem \ref{thm-5} doesn't hold. Then there
exists a non-empty compact subset $C$ of $[0,1]$ such that $Dim_P C=0$ and
	$$\limsup_{H\to+\infty}\limsup_{N\to+\infty}\mathbb{E}_{n\le N}\sup_{\alpha\in C}|\mathbb{E}_{h\le H}\mu(n+h)e(h\alpha)|>0.$$
	Thus, there is $\beta\in \{0,\frac{1}{4},\frac{2}{4},\frac{3}{4}\}$ with
	\begin{align*}
		\limsup_{H\to+\infty}\limsup_{N\to+\infty}\mathbb{E}_{n\le N}\max\{ \sup_{\alpha\in C}
Re\left( e(\beta)\mathbb{E}_{h\le H}\mu(n+h)e(h\alpha)\right),0\}>0.
	\end{align*}
	By Lemma \ref{A-3-7}, there is  $y\in \left(\mathbb{T}\cup\{p\}\right)^{\mathbb{Z}}$ such
that $(y,C)$ meets  Property $(*)$ in Lemma \ref{lem} and
	\begin{align}\label{A-eq-333}\limsup_{N\to\infty}|\mathbb{E}_{n\le N}
		\mu (n)\widetilde F(\sigma^ny)|>0,	\end{align}
	where $\widetilde F: X_y\to \mathbb{R}$ is a continuous function defined by $\widetilde F( z)=z(0)$
if $z(0)\in \mathbb{T}$ and $0$ if $z(0)=p$. By Lemma \ref{lem-1}, the t.d.s. $(X_y,\sigma)$ satisfies
$$\liminf_{n\rightarrow +\infty} \frac{S_n(d,\sigma ,\rho,\epsilon)}{n^\tau}=0 \text{	
for any } \epsilon>0, \tau>0\text{ and }\rho\in \mathcal{M}(X_y,\sigma),$$
	since $Dim_P C=0$.
	Using the result of \cite{HWY}, one has
	$$\limsup_{N\to\infty}|\mathbb{E}_{n\le N}
	\mu (n)\widetilde F(\sigma^ny)|=0.$$ This conflicts with \eqref{A-eq-333} and the
theorem follows. This ends the proof of Theorem \ref{thm-5}.
\end{proof}
\section*{Acknowledgement}
W. Huang was partially supported by NNSF of China (11431012,11731003),
L. Xu was partially supported by NNSF of China (11801538, 11871188)
and X. Ye was partially supported by NNSF of China (11431012).	

\end{document}